\documentclass[reqno,11pt]{amsart}
\usepackage[left=1in,right=1in,top=1in,bottom=1in]{geometry}
\usepackage{enumitem}
\usepackage{graphicx}
\usepackage{tikz}
\usetikzlibrary{arrows, decorations.markings,matrix,trees}
\usepackage{hyperref}
\hypersetup{
   colorlinks=true,
   citecolor=blue,
   filecolor=blue,
   linkcolor=blue,
   urlcolor=blue
}
\def\Xint#1{\mathchoice
  {\XXint\displaystyle\textstyle{#1}}%
  {\XXint\textstyle\scriptstyle{#1}}%
  {\XXint\scriptstyle\scriptscriptstyle{#1}}%
  {\XXint\scriptscriptstyle\scriptscriptstyle{#1}}%
  \int}
\def\XXint#1#2#3{{\setbox0=\hbox{$#1{#2#3}{\int}$}
  \vcenter{\hbox{$#2#3$}}\kern-.5\wd0}}

\def\dashint{\Xint-}

\newcommand{\al}{\alpha}

\newcommand{\lda}{\lambda}
\newcommand{\om}{\Omega}
\newcommand{\pa}{\partial}
\newcommand{\va}{\varepsilon}
\newcommand{\ud}{\mathrm{d}}
\newcommand{\be}{\begin{equation}}
\newcommand{\ee}{\end{equation}}
\newcommand{\w}{\omega}
\newcommand{\Lda}{\Lambda}

\newcommand{\C}{\mathbb{C}}

\newcommand{\cD}{\mathcal{D}}

\newcommand{\cE}{\mathcal{E}}

\newcommand{\cF}{\mathcal{F}}

\newcommand{\bG}{\mathbb{G}}

\newcommand{\cL}{\mathcal{L}}

\newcommand{\Z}{\mathbb{Z}}

\newcommand{\M}{\mathcal{M}}
\newcommand{\MM}{\mathbb{M}}

\newcommand{\cN}{\mathcal{N}}

\newcommand{\cP}{\mathcal{P}}

\newcommand{\R}{\mathbb{R}}

\newcommand{\cR}{\mathcal{R}}

\newcommand{\Ss}{\mathbb{S}}

\newcommand{\cX}{\mathcal{X}}

\newcommand{\wc}{\rightharpoonup}

\newcommand{\HH}{\mathcal{H}}

\newcommand{\vp}{\varphi}

\newcommand{\Ga}{\Gamma}

\newcommand{\sg}{\sigma}
\newcommand{\ift}{\infty}
\newcommand{\wt}{\widetilde}
\newcommand{\wh}{\widehat}
\newcommand{\f}{\frac}

\newcommand{\ol}{\overline}

\newcommand{\op}{\operatorname}
\newcommand{\Sg}{\Sigma}

\newcommand{\na}{\nabla}

\newcommand{\abs}[1]{\lvert#1\rvert}

\DeclareMathOperator{\dist}{dist}

\DeclareMathOperator{\supp}{supp}

\DeclareMathOperator{\tr}{tr}
\DeclareMathOperator*{\osc}{osc}

\DeclareMathOperator{\sing}{sing}

\DeclareMathOperator{\loc}{loc}

\def\<{\langle}\def\>{\rangle}
\def\({\left(}\def\){\right)}
\numberwithin{equation}{section}
\theoremstyle{plain}
\newtheorem{thm}{Theorem}[section]

\newtheorem{lem}[thm]{Lemma}
\newtheorem{prop}[thm]{Proposition}

\theoremstyle{definition}
\newtheorem{defn}[thm]{Definition}

\theoremstyle{remark}
\newtheorem{rem}[thm]{Remark}

\title[Ginzburg-Landau type functionals in high dimensions]{Asymptotics of Minimizers for Ginzburg--Landau-type Functionals in High Dimensions}

\date{}

\author{Giacomo Canevari}
\address{Universit\`{a} di Verona, Strada Le Grazie 15, 37134 Verona, Italy}
\email{giacomo.canevari@univr.it}

\author{Haotong Fu}
\address{School of Mathematical Sciences, Peking University, Beijing 100871, China}
\email{2301110012@pku.edu.cn}

\author{Wei Wang}
\address{School of Mathematical Sciences, Peking University, Beijing 100871, China}
\email{gjmtamag@gmail.com,\,\,2201110024@stu.pku.edu.cn}

\begin{document}

\begin{abstract}
We investigate local minimizers of Ginzburg--Landau-type functionals in dimension $n\ge 3$ that satisfy logarithmic energy bounds, assuming the potential has a vacuum manifold with a finite fundamental group. We show that the normalized energy measures converge to an $(n-2)$-rectifiable measure associated with a stationary varifold, with quantized density determined by the homotopy classes of the vacuum manifold. Away from the support of the $(n-2)$-rectifiable measure, the minimizers converge strongly in $H^1_{\loc}$ to a minimizing harmonic map, which is smooth outside an $(n-3)$-rectifiable singular set.
\end{abstract}

\subjclass[2020]{47J30, 49Q20, 58E12, 58E20}

\keywords{Ginzburg--Landau-type functional, stationary varifold, harmonic maps, homotopy quantization}

\maketitle

\section{Introduction}

Let $ \om\subset\R^n $ with $n\in\Z$, $n\geq 2$ be a bounded domain, and let $ \cN $ be a smooth, compact, connected Riemannian manifold that is isometrically embedded in $ \R^{m} $ for some integer $m=m(\cN)$. For $ \va\in(0,1) $ and $ u\in H^1(\om,\R^m) $, we consider the Ginzburg--Landau-type functional
\be
E_{\va}(u,\om):=\int_{\om}e_{\va}(u)\ud x,\quad e_{\va}(u):=\f{1}{2}|\na u|^2+\frac{1}{\va^2}f(u),\tag{GL${}_{\va}$}\label{GLfunctional}
\ee
where the potential $f\in C^1(\R^m,[0,+\ift))$ attains its minimum on the ``vacuum manifold'' $ \cN=f^{-1}(0) $. The Euler--Lagrange equation associated with \eqref{GLfunctional} is
\be
\Delta u=\f{1}{\va^2}D_u f(u).\label{EL}
\ee
The Ginzburg--Landau-type functional \eqref{GLfunctional} arises in several physical models, including superconductivity and liquid crystals. A typical case is where
\begin{equation} \label{GLS1}
\cN=S^1\subset\C\quad\text{and}\quad f(u)=\frac{1}{2}(|u|^2-1)^2.
\end{equation}
This case has been extensively studied. In two dimensions, the asymptotic behavior of minimizers (with $f$ as in~\eqref{GLS1}) is well understood, following Bethuel, Br\'{e}zis, H\'{e}lein \cite{BBH93, BBH94} and Struwe \cite{Str94}. For a fixed boundary datum $g_{\va}=g:\pa\Omega\to S^1$ of degree~$d\neq 0$, the minimizers $ u_\varepsilon $ converge to a limit map, which is smooth away from $\abs{d}$ singular points. Moreover, the energy satisfies
\[
E_{\va}(u_{\va},\Omega)=\pi|d||\log\va|+O(1)\quad\text{as }\va\to 0^+.
\]
Analogous but generally weaker results for non-minimizing solutions of \eqref{EL}, \eqref{GLS1} hold under suitable assumptions; see \cite[Chapter X]{BBH94} and \cite{Bet06, Riv99} for further developments. In higher dimensions, singularities are no longer isolated and are expected to concentrate on codimension-two sets. One typically works under the energy bound
\be
E_{\va}(u_{\va},\om)\le C|\log\va|,\label{GLS1log}
\ee
where $C>0$ is independent of $\va$.
For minimizers, Lin and Rivi\`{e}re \cite{LR99} proved that, up to a subsequence, $ u_{\va} $ converges in $ C_{\loc}^k $ away from an $ (n-2) $-rectifiable integral area-minimizing varifold. Related variational descriptions through $ \Gamma $-convergence were developed in \cite{ABO05, JS02}. For solutions of \eqref{EL}, the case $ n=3 $ was treated in~\cite{LR01} and later extended to all dimensions $n\geq 3$ by Bethuel, Br\'{e}zis, Orlandi~\cite{BBO01}. In this setting, the authors established improved convergence away from an $ (n-2) $-rectifiable stationary varifold, along with global uniform estimates. Unlike the minimizing case, the limiting varifold need not be integral or area-minimizing; see \cite{PS23} for a recent example. Further results for the $\Ss^1$-valued Ginzburg--Landau model have been obtained in several directions, including Riemannian manifolds, weighted energies, and models with magnetic fields, as studied in \cite{AS98a, AS98b, BOS05, BBM04, SS07, Ste21}.

We now consider the case where the vacuum manifold $ \mathcal{N} $ is a general compact manifold, and the potential $ f $ satisfies suitable non-degeneracy assumptions near $ \cN $. One direct difference from the case $ \cN=S^1 $ is that singularities may appear even when the energy is bounded, that is,
\be
E_{\va}(u_{\va},\om) \leq C\label{boundedenergy}
\ee
for some $ \va $-independent constant $ C>0 $. For minimizers, when \eqref{boundedenergy} holds, Contreras, Lamy, and Rodiac proved in \cite{CL22, CLR18} that, up to a subsequence, $ u_{\va} $ converges strongly in $ H^1 $ to a minimizing harmonic map $ u_* $ with target manifold $ \cN $, and the convergence is uniform away from the singular set of $ u_* $. Standard results on minimizing harmonic maps (see \cite{NV17, SU82}) imply that the singular set of $ u_* $ is $ (n-3) $-rectifiable. For solutions of \eqref{EL} with uniform bounded energy, there exists a singular set that is instead $ (n-2) $-rectifiable, and the convergence is uniform away from this set. We refer to Lin and Wang~\cite{LW99} for details. One can also consider the logarithmic energy regime~\eqref{GLS1log}. In two dimensions, compactness results for minimizers were proved in~\cite{Can15}, followed by a theory of vortices by Monteil, Rodiac, and Van Schaftingen~\cite{MRS21, MRS22}, which extends earlier results for minimizers in the case~$\cN = S^1$~\cite{BBH93, BBH94}. In three dimensions, for the Landau--de Gennes model, both the uniaxial case ($ \cN =\R\mathbb{P}^2 $) and the biaxial case ($ \cN=\Ss^3/Q_8 $, where~$ Q_8 $ is the Quaternion group) have been studied in~\cite{Can17} and~\cite{WZ24}, respectively. These results are analogous to those in~\cite{LR99}. In higher dimensions, when the fundamental group of $ \mathcal{N} $ is Abelian, a variational characterization of the energy concentration sets has been obtained via the $ \Gamma $-convergence~\cite{CO21}.

For solutions of \eqref{EL} satisfying \eqref{GLS1log}, with general~$\cN$, the situation is quite different from the case where $ \mathcal{N} = S^1 $. In general, one cannot rely on tools based on the Hodge decomposition for maps taking values in $ S^1 $. Furthermore, even in the case of minimizers, the arguments in~\cite{Can17, WZ24} are essentially based on the facts that the fundamental group of $ \mathcal{N} $ is finite and that $ n=3 $. These issues motivate the setting of the present paper: we consider the case where the fundamental group of $ \mathcal{N} $ is finite, and the domain has dimension $ n\geq 3 $. More precisely, we assume the potential $ f $ and the vacuum manifold $ \mathcal{N} $ satisfy the following conditions:

\begin{enumerate}[label=$(\op{H}\theenumi)$]
\item\label{A1} $ f\in C^{\ift}(\R^m) $ and $ f(y)\geq 0 $ for any $ y\in\R^m $.
\item\label{A2} The fundamental group $ \pi_1(\cN) $ of $ \cN $ is finite.
\item\label{A3} $ f $ does not vanish at infinity, i.e.,
\[
\liminf_{|y|\to+\ift}f(y)>0.
\]
\item\label{A4}$ f $ vanishes non-degenerately on $ \cN $, that is,
\[
D^2f(x)[v,v]>0\quad\text{ for any }x\in\cN,\,\,v\in(T_x\cN)^{\perp} \backslash\{0\}.
\]
\end{enumerate}

Before stating the main theorem, we define the notion of local minimizers for a given functional.

\begin{defn}[Local minimizers]
Let $ \om\subset\R^n $ be a bounded domain. Define
\[
\cF(u,U):=\int_UF(x,u,\nabla u)\ud x,
\]
where $ U\subset\Omega $ is open and $ F:\om\times\R^m\times \R^{m\times n}\to \R $. We say that $ u\in H^1_{\loc}(\Omega,\R^m) $ is a local minimizer of $ \cF $ if, for any ball $ B_r(x)\subset\subset\om $ and any
$ v\in H^1(B_r(x),\R^m) $ satisfying $ v=u $ on $ \pa B_r(x) $ in the trace sense, we have
\[
\cF(u,B_r(x))\leq\cF(v,B_r(x)).
\]
\end{defn}

Our main result concerns the asymptotic behavior, as~$\va\to 0$,
of a family of local minimizers $ \{u_{\va}\}_{\va\in(0,1)}\subset H^1(\om,\R^m) $ of \eqref{GLfunctional} satisfying
\be
E_{\va}(u_{\va},\om)\leq M(|\log\va|+1)\quad\text{and}\quad\|u_{\va}\|_{L^{\ift}(\om,\R^m)}\leq M\label{assumptionbound}
\ee
for some $\va$-independent constant~$ M>0 $.
Given such a family, we define the normalized energy measures $ \{\mu_{\va}\}_{\va\in(0,1)}\subset(C^0(\ol{\om}))' $ by
\be
\mu_{\va}(A):=\f{1}{|\log\va|}\int_Ae_{\va}(u_{\va})\ud x\label{definemeasurewithuva}
\ee
for any Borel set $ A \subset\om $. Because of the assumption~\eqref{assumptionbound}, there exist a $ \mu_*\in(C^0(\ol{\om}))' $ and a sequence $ \va_i\to 0^+ $ such that $ \mu_{\va_i}\wc^*\mu_* $ in $ (C^0(\ol{\om}))' $.
Our main result provides information on the support~$ S_*:=\supp(\mu_*) $  of~$ \mu_* $. 

\begin{thm}\label{maintheorem}
Let $ M>0 $ and $ \om\subset\R^n $ be a bounded domain with $ n\geq 3 $. Assume that \ref{A1}--\ref{A4} hold, and let $ \{u_{\va}\}_{\va\in(0,1)}\subset H^1(\om,\R^m) $ be a family of local minimizers of \eqref{GLfunctional} satisfying~\eqref{assumptionbound}. Then, the following properties hold.
\begin{enumerate}[label=$(\theenumi)$]
\item $ S_* $ is an $ (n-2) $-rectifiable set.
\item $ \mu_* $ corresponds to an $ (n-2) $-stationary varifold $($see Section \ref{stationaryvarifoldsec}$)$. Moreover,
\[
\mu_*\llcorner\om=\Theta^{n-2}(\mu_*,x)\HH^{n-2}\llcorner(S_*\cap\om),
\]
and for $ \HH^{n-2} $-a.e. $ x\in S_* $,
\be
\Theta^{n-2}(\mu_*,x)\in\{|\sg|_*:\sg\in[\Ss^1,\cN]\},\label{Thetadiscrete}
\ee
where $ [\Ss^1,\cN] $ denotes the collection of free homotopy classes of $ \cN $ $($see Definition \ref{definitionfreehomotopyclass}$)$, and $ |\cdot|_* $ is a suitable norm on~$ [\Ss^1,\cN] $ $($see Definition \ref{DefinitionEsg}$)$.
\item For any open set $ U\subset\subset\om\backslash S_* $, there exists $ C>0 $, depending only on $ f,M,\cN,n $, and $ U $ such that \[
\limsup_{i\to+\ift}E_{\va_i}(u_{\va_i},U)\leq C.
\]
\item There exists $ u_*\in H_{\loc}^1(\om\backslash S_*,\cN) $, a local minimizer of the Dirichlet energy
\be
E_0(u,\om):=\f{1}{2}\int_{\om}|\na u|^2\ud x,\label{DirichletEnergy}
\ee
such that, up to a subsequence $ u_{\va_i}\to u_* $ strongly in $ H_{\loc}^1(\om\backslash S_*,\R^m) $. Up to a subsequence, $ u_{\va_i}\to u_* $ in $ C_{\loc}^0(\om\backslash(S_*\cup\sing(u_*))) $, where $ \sing(u_*) $ represents the singular set of $ u_* $ and $ \sing(u_*) $ is $ (n-3) $-rectifiable.
\end{enumerate}
\end{thm}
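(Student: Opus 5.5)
The proof will follow the scheme of Lin--Rivi\`{e}re and Bethuel--Br\'{e}zis--Orlandi. The minimizing property replaces the $S^1$-specific Hodge-decomposition tools, and the finiteness of $\pi_1(\cN)$ supplies the topological input.

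\textbf{Step 1: monotonicity and stationarity.} Since local minimizers are smooth solutions of \eqref{EL}, the Pohozaev-type identity holds. It gives the monotonicity formula: $r\mapsto r^{2-n}E_{\va}(u_{\va},B_r(x))$ is almost nondecreasing, up to a nonnegative potential term. Dividing by $|\log\va|$ and passing to the limit shows that $\Theta^{n-2}(\mu_*,x)$ exists everywhere and is upper semicontinuous. Inner variations of \eqref{GLfunctional} give
\[
\int\Big(e_{\va}(u_{\va})\delta_{ij}-\pa_iu_{\va}\cdot\pa_ju_{\va}\Big)\pa_iX^j\,\ud x=0 .
\]
Normalizing by $|\log\va|$ produces a limiting matrix-valued measure $A_{ij}\,\ud\mu_*$. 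Stationarity of the associated varifold, statement (2), will follow once we know that $A$ is $\mu_*$-a.e. the orthogonal projection onto an $(n-2)$-plane. This requires showing that the energy is asymptotically concentrated in directions normal to $S_*$ and that the potential term is $o(|\log\va|)$.

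\textbf{Step 2: clearing-out / $\eta$-compactness.} This is the core step. The claim is that there exists $\eta>0$ such that $E_{\va}(u_{\va},B_r(x))\le\eta r^{n-2}|\log\va|$ implies $|\mathrm{dist}(u_{\va},\cN)|\le\delta$ on $B_{r/2}(x)$ together with a uniform energy bound there. Using minimality, I would do the following.
\begin{enumerate}
\item Select, by Fubini and the smallness of the energy, a good radius whose sphere carries energy $\lesssim\eta|\log\va|$. On a suitable grid of $2$-skeleta, the map is close to $\cN$ except on small sets.
\item Construct a competitor. Because $\pi_1(\cN)$ is finite, every loop class has finite order, so a loop carrying only $\eta|\log\va|$ energy is contractible. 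The extension then costs $O(1)$ energy plus $o(|\log\va|)$, using a projection onto $\cN$ (via \ref{A4}, the nearest-point projection is well defined near $\cN$) and Hardt--Kinderlehrer--Lin-type extensions across the higher skeleta.
\item Minimality then gives $E_{\va}(u_{\va},B_{r/2})\le C$. The bounded-energy theory of Contreras--Lamy--Rodiac then yields (3) and the $H^1$ convergence to a minimizing harmonic map $u_*$ away from $S_*$, proving (4). The $(n-3)$-rectifiability of $\sing(u_*)$ is Naber--Valtorta.
\end{enumerate}
This step is the main obstacle. The lower bound has logarithmic size, so the extension must be done carefully to avoid losing a $|\log\va|$ factor. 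I would first try the Hardt--Kinderlehrer--Lin/Bourgain--Brezis--Mironescu averaging over translated cubic grids.

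\textbf{Step 3: density bounds, rectifiability and quantization.} From Step 2, every $x\in S_*$ has $\Theta^{n-2}(\mu_*,x)\ge\eta$. Combined with the upper bound coming from monotonicity, Preiss's theorem, or more directly Allard's rectifiability theorem once stationarity is available, gives (1).

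For quantization, blow up at an $\HH^{n-2}$-a.e. point where the approximate tangent plane $P$ exists. On a $2$-dimensional slice orthogonal to $P$, the restricted map is close to $\cN$ on a circle. Its free homotopy class $\sg\in[\Ss^1,\cN]$ is well defined and constant along the plane. The two-dimensional lower bound $E_{\va}\ge|\sg|_*|\log\va|-C$ on disks (Definition \ref{DefinitionEsg}) gives $\Theta\ge|\sg|_*$. Here $|\sg|_*$ must be understood as the minimal total cost over decompositions of $\sg$ into several vortex classes.

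The matching upper bound comes from a competitor built from the optimal $2$d vortex configuration extruded along $P$; minimality then gives $\Theta\le|\sg|_*$. The same blow-up also shows that $A_{ij}$ is the projection onto $P$, which completes (2).
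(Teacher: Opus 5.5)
Your overall architecture is the paper's: monotonicity, clearing-out, Preiss, Ambrosio--Soner stationarity, blow-up quantization, and Contreras--Lamy--Rodiac away from $S_*$. However, Step 2, which you rightly call the core, does not go through as written.

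First, no single competitor gives $E_{\va}(u_{\va},B_{r/2})\le C$. A comparison on a good sphere $\pa B_\rho$ only bounds $y(\rho):=E_{\va}(u_{\va},B_\rho)$ in terms of $y'(\rho)$, which dominates $E_{\va}(u_{\va},\pa B_\rho)$. The interpolation layer of thickness $h$ costs $\lesssim h\,y'$. The $\cN$-valued filling costs $\lesssim \rho^{\frac{n-1}{2}}\sqrt{y'}$ (Lemma \ref{ExtensionLemma2}); this is where finiteness of $\pi_1(\cN)$ is really used, via lifting to the compact universal cover. The uniform bound comes only from reading this as a differential inequality and integrating it over a set of radii of positive measure. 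The $\sqrt{y'}$ term forces $y'\gtrsim y^2$, which is incompatible with $y$ being large on an interval of fixed length. Your jump from ``$O(1)+o(|\log\va|)$'' to ``$\le C$'' does not follow. Moreover, an $o(|\log\va|)$ bound would give neither (3) nor the bounded-energy hypothesis you need to invoke Contreras--Lamy--Rodiac for (4).

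Second, the claimed cost is wrong for $n\ge 4$, and ``finite order'' is not what makes loops trivial. A grid $2$-cell of size $h$ with nontrivial boundary class carries, together with its boundary, at least $c_0\log(h/\va)-C$ (Lemma \ref{LowerBound}). Fubini only bounds the $2$-skeleton energy by $Ch^{3-n}E_{\va}(u_{\va},\pa B_\rho)$. With boundary energy of order $\eta|\log\va|$, this forces $h\gtrsim\eta^{1/(n-3)}$. So, unlike $n=3$ where $h=\va^{1/2}|\log\va|$ works, the grid scale cannot tend to $0$ with $\va$. The layer then costs $\sim hE_{\va}(u_{\va},\pa B_\rho)\sim\eta^{\frac{n-2}{n-3}}|\log\va|$, not $o(|\log\va|)$. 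Averaging over translated grids is exactly what produces the $h^{3-n}$ bound, so it does not remove this constraint.

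The paper resolves this with the energy-dependent scale $h=\big((E_{\ol{\va}}(u_{\ol{\va}},\pa B_1)+\eta_0)/(\eta_0|\log\ol{\va}|)\big)^{1/(n-3)}$ in Lemma \ref{Luckhauslemma}. This yields the nonlinear inequality $y\le C_0\big(\sqrt{y'}+(y'/|\log\va|)^{\frac{n-2}{n-3}}|\log\va|+1\big)$. Lemma \ref{AnODE} then shows this still forces $y(\f{1}{2})\le C$ whenever $y(t_0)\le\eta_1|\log\va|$.

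A smaller omission of the same kind affects Step 3. Two things there need more than you provide: constancy of the slice class along the plane, and an upper-bound competitor matching $u_\va$ on the lateral boundary and on the top and bottom faces (which carry energy of order $|\log\va|$). Both require the cylindrical Luckhaus lemma (Lemma \ref{Luckhauslemma1}) and a cone extension costing $O(\delta|\log\va|)$.
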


\begin{rem}
Several remarks are as follows.
\begin{enumerate}
\item If $n=3$, then by \eqref{Thetadiscrete} and the structure theorem for $1$-dimensional stationary varifolds \cite[Theorem, p. 89]{AA76}, for any open set $U\subset\subset\om$, there exist finitely many closed line segments $\{\ell_i\}_{i=1}^k$ and classes $\{\sigma_i\}_{i=1}^k \subset [\Ss^1,\cN]$ such that
\[
S_*\cap\ol{U}=\bigcup_{i=1}^k\ell_i,\quad\mu_*=\sum_{i=1}^k|\sg_i|_*\HH^1\llcorner\ell_i.
\]
In particular, $ \sing(u_*) $ is locally finite.
\item When $n=3$, we introduce an additional variable $x_4 \in (-1,1)$ and define $\wt{u}_{\va}(x,x_4) = u_{\va}(x)$ for $x=(x_1,x_2,x_3)\in\om$. One checks that $\{\wt{u}_{\va}\}_{\va\in(0,1)}$ is a family of local minimizers of $E_{\va}(\cdot,\om\times(-1,1))$. Hence, all the properties of Theorem \ref{maintheorem} for $n=3$ follow from the corresponding results in dimension $ n=4 $. For this reason, we assume that $n \ge 4$ in the rest of this paper, unless otherwise specified.
\item In special cases, such as the three-dimensional Landau--de Gennes model studied in \cite{Can17,WZ24}, there are sufficient conditions (see \cite[Section 6]{Can17} and \cite[Section 2.2]{WZ24}) ensuring that global minimizers satisfy \eqref{assumptionbound}. For the Ginzburg--Landau-type functional considered here, we also provide general conditions (see Proposition \ref{sufficientconditions}).
\end{enumerate}
\end{rem}

\subsection{Difficulties and strategies}

The key ingredient in the proof of Theorem \ref{maintheorem} is the clearing-out property. More precisely, let $ \{u_{\va}\}_{\va\in(0,1)} $ be a family of local minimizers of \eqref{GLfunctional} in $\om\subset\R^n$. We aim to prove that there exist $\ol{\va}_0,\eta\in(0,1)$ and $C>0$, all independent of $\va$, such that whenever $\va\in(0,\ol{\va}_0 r)$,
\be
E_{\va}(u_{\va},B_{2r}^n(x))\leq \eta r^{n-2}\log\f{r}{\va}\quad\Longrightarrow\quad E_{\va}(u_{\va},B_r^n(x))\leq Cr^{n-2}.\label{clearingout1}
\ee
To explain the difficulty in arbitrary dimensions, we first recall the main step in the $3$-dimensional case, following \cite{Can17}. In dimension three, the proof of \eqref{clearingout1} is based on a Luckhaus-type lemma on $\pa B_1^3$, inspired by~\cite[Lemma~1]{Luc88}. Generally speaking, given a map $w_{\va}\in H^1(\pa B_1^3,\R^m)$ such that
\be
E_{\va}(w_{\va},\pa B_1^3)\ll|\log\va|,\label{wvaB13small}
\ee
one can construct a map $ v_{\va}\in H^1(\pa B_1^3,\cN) $ together with an interpolation $\vp_{\va}\in H^1(B_1^3\backslash B_{1-h(\va)}^3,\R^m)$, where $h(\va)=\va^{\f{1}{2}}|\log\va|$, such that
\begin{gather*}
\vp_{\va}(x)=w_{\va}(x)\text{ and }
\vp_{\va}((1-h(\va))x)=v_{\va}(x)\quad \text{for }\HH^2\text{-a.e. }x\in\pa B_1^3,\\
\|\na v_{\va}\|_{L^2(\pa B_1^3)}\leq C E_{\va}(w_{\va},\pa B_1^3),\\
E_{\va}(\vp_{\va},B_1^3\backslash B_{1-h(\va)}^3)\leq C h(\va)E_{\va}(w_{\va},\pa B_1^3).
\end{gather*}
Using this construction and an extension argument as in~\cite{HKL86}, one obtains suitable competitors for the local minimizers. It then follows that, under the assumption in \eqref{clearingout1}, there exists a set $D\subset [r,2r]$ with $\HH^1(D)\geq\f{r}{4}$ such that, for every $\rho\in D$,
\be
E_{\va}(u_{\va},B_{\rho}^3(x))\leq Cr\((E_{\va}(u_{\va},\pa B_{\rho}^3(x)))^{\f{1}{2}}+1\).\label{ODE11}
\ee
This estimate is strong enough to yield the desired clearing-out property. The above argument is specific to dimension three. The reason is that the construction is carried out on the two-dimensional boundary $\pa B_1^3$, which is the critical dimension for the appearance of singularities. In this setting, \eqref{wvaB13small} is enough to rule out topological obstructions in each two-dimensional cell in a suitable partition of $\pa B_1^3$ at the scale $h(\va)$. This is the essential point of the Luckhaus lemma. However, in higher dimensions, this argument no longer applies. Indeed, the singular set on $\pa B_1^n$ is expected to have dimension $n-3$. In particular, one cannot choose a predetermined scale $ h(\va)\to 0^+ $ depending only on $\va$ and still guarantee that all two-dimensional cells in the construction are free of topological defects.

To overcome this difficulty, we choose the interpolation scale adaptively based on the boundary datum. More precisely, for $w_{\va}\in H^1(\pa B_1^n,\R^m)$ satisfying
\[
E_{\va}(w_{\va},\pa B_1^n)\ll|\log\va|,
\]
we choose the scale $h(\va)$ to depend on $w_{\va}$, namely,
\[
h(\va):=\(\f{E_{\va}(w_{\va},\pa B_1^n)}{\eta_0|\log\va|}\)^{\f{1}{n-3}},
\]
where $\eta_0\in(0,1)$ is a sufficiently small constant to be fixed later. This choice allows us to control the topological obstructions on every $2$-dimensional cell and thereby enables the proof of the Luckhaus-type lemma. The exponent $\f{1}{n-3}$ reflects the expected dimension of the singular set in $\pa B_1^n$.

Since $h(\va)$ now depends on the boundary energy, the estimate \eqref{ODE11} no longer holds in its original form. Instead, we obtain a weaker differential inequality whose right-hand side contains a nonlinear term originating from the dependence of $h(\va)$ on $E_{\va}(u_{\va},\pa B_{\rho}^n(x))$. However, this new inequality is weaker than \eqref{ODE11}; after technical refinements to the ODE argument, it is still sufficient to derive the clearing-out property \eqref{clearingout1}.

\subsection{Organization of this paper}

In Section \ref{SectionPreliminaries}, we collect the basic ingredients for the proof of the main theorem. These include the analysis of topological singularities in the two-dimensional case, several extension results, lower and upper energy bounds on disks and cylinders, the monotonicity formula, and the compactness of minimizers in the bounded-energy regime. In Section \ref{SectionLuckhaus}, we establish Luckhaus-type results on both the unit ball and the cylinders. These results are a key tool in the proof of the clearing-out property in Proposition \ref{clearingout}, which is proved in Section \ref{SectionSingularset1}. Using the clearing-out property, together with several standard tools in geometric measure theory, we show in Section \ref{SectionSingularset1} that the limiting measure is associated with an $(n-2)$-stationary varifold. Finally, in Section \ref{SectionSingularset1prime}, we prove quantization of the density of the stationary varifold obtained in the previous section and complete the proof of Theorem \ref{maintheorem}.

\subsection{Notations and conventions}

\begin{itemize}
\item Throughout the paper, $C$ denotes a positive constant. When necessary, we write $C(a,b,...)$ to indicate its dependence on the parameters $a,b,...$ Additionally, the value of $C$ may change from line to line.

\item We adopt the Einstein summation convention: repeated indices are implicitly summed over their range.

\item For $k\in\Z$, $k\geq 2$, we define
\[
B_r^k(x):=\{y\in\R^k:\,\,|y-x|<r\}.
\]
We denote the origin of $\mathbb R^k$ by $0^k$. When $k=n$, we suppress the superscript and simply write $0$. Thus, when $k=n$, we write $B_r(x)$ instead of $B^n_r(x)$, and we write $B_r$ instead of $B^n_r(0)$.

\item For $k\in\Z\cap[0,n]$, $\HH^k$ denotes the $k$-dimensional Hausdorff measure in $\R^n$, and $\dim_{\HH}$ denotes the Hausdorff dimension.

\item  We denote the Euclidean inner product in $\R^n$ by $u\cdot v$ for $u,v\in\R^n$, and the inner product in $\R^m$ by $u:v$ for $u,v\in\R^m$.

\item Let $k\in\Z\cap[1,n]$, and let $\M\subset\R^n$ be a $k$-dimensional Lipschitz submanifold. We denote by $\na_{\M}$ the tangential gradient on $\M$, which exists $\HH^k$-a.e. by Rademacher's theorem. If $\M$ is of class $C^j$ with $j\in\Z$ such that $ j\geq 2$, we denote by $D_{\M}^j$ the $j$-th order tangential derivatives. For $u\in H^1(\M,\R^m)$, we define
\[
e_{\va}(u,\M):=\frac{1}{2}|\na_{\M}u|^2+\frac{1}{\va^2}f(u),
\quad
E_{\va}(u,\M):=\int_{\M} e_{\va}(u,\M)\ud\HH^k,
\]
for $\va\in(0,1)$. If $\M\subset\R^n$ is an open set, we write $\na_{\M}=\na$, $D_{\M}^j=D^j$, and $e_{\va}(u,\M)=e_{\va}(u)$.

\item Let $U\subset\R^k$ be a Lipschitz domain and $u,v\in H^1(U,\R^m)$. We write $u|_{\pa U}=v|_{\pa U}$ to mean that $u=v$ on $\pa U$ in the sense of traces.

\item We use $\HH^k$ to denote the $k$-dimensional Hausdorff measure. In particular, when $k=n$, $\HH^n$ coincides with the Lebesgue measure $\cL^n$ on $\R^n$. For simplicity, we sometimes omit $\ud\cL^n$ in the integrals.
\end{itemize}

\section{Preliminaries}\label{SectionPreliminaries}

\subsection{Basic properties of the manifold \texorpdfstring{$ \cN $}{} and the potential \texorpdfstring{$ f $}{}}

In this subsection, we collect some standard properties of the target manifold $\cN$ and the potential $f$, which will be used throughout the paper. For $ \delta>0 $, we denote the $ \delta $-neighborhood of $ \cN $ by
\[
\cN_{\delta}:=\{y\in\R^{m}:\dist(y,\cN)<\delta\}.
\]

\begin{lem}
Assume that $ f $ and $ \cN $ satisfy \ref{A1}-\ref{A4}. Then there exists $ \delta_{\cN}>0 $, depending only on $ \cN $, such that the nearest point projection onto $\cN$ is well-defined. More precisely, there exists a unique $C^{\ift}$ map $\Pi_{\cN}:\cN_{\delta_{\cN}}\to\cN$ such that
\begin{equation}
|y-\Pi_{\cN}(y)|=\dist(y,\cN)\quad\text{for any }y\in\cN_{\delta_{\cN}}.
\label{Nearestpointprojection}
\end{equation}
Moreover, there exist $\delta_f\in(0,\delta_{\cN})$ and constants $0<m_f<M_f<+\ift$ such that, for any $y\in\cN_{\delta_f}$,
\begin{equation}
\begin{gathered}
m_f(\dist(y,\cN))^2\leq Df(y)\cdot(y-\Pi_{\cN}(y))\leq M_f(\dist(y,\cN))^2,\\
m_f(\dist(y,\cN))^2\leq f(y)\leq M_f(\dist(y,\cN))^2,
\end{gathered}
\label{A31}
\end{equation}
and
\begin{equation}
f(ty+(1-t)\Pi_{\cN}(y))\leq M_ft^2f(y)\quad\text{for any }t\in[0,1].
\label{fBconvex}
\end{equation}
\end{lem}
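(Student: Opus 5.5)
The plan is to treat the three assertions separately: the tubular neighbourhood theorem gives the projection, and a second-order Taylor expansion of $f$ in the normal directions, made uniform by compactness of $\cN$, gives \eqref{A31} and \eqref{fBconvex}.

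\emph{Projection.} Since $\cN$ is a smooth compact embedded submanifold of $\R^m$, the normal exponential map $E:(x,v)\mapsto x+v$ on the normal bundle $\{(x,v):x\in\cN,\ v\in(T_x\cN)^\perp\}$ has invertible differential along the zero section. By the inverse function theorem and compactness, $E$ is a diffeomorphism from $\{|v|<\delta_{\cN}\}$ onto $\cN_{\delta_{\cN}}$ for some small $\delta_{\cN}>0$. For $y\in\cN_{\delta_{\cN}}$, a closest point $x\in\cN$ exists by compactness. The first-order optimality condition gives $y-x\perp T_x\cN$ with $|y-x|<\delta_{\cN}$, so $(x,y-x)$ is the unique preimage of $y$ under $E$. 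Hence $\Pi_{\cN}=\mathrm{pr}_1\circ E^{-1}$ is well defined, unique, and $C^\infty$, and it satisfies \eqref{Nearestpointprojection}.

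\emph{Quadratic bounds.} For $y\in\cN_{\delta_{\cN}}$, write $x=\Pi_{\cN}(y)$, $v=y-x\in(T_x\cN)^\perp$ and $d=|v|=\dist(y,\cN)$.
\begin{itemize}
\item By \ref{A1}, $f\ge0$ and $f=0$ on $\cN$, so every point of $\cN$ is a minimum and $f(x)=0$, $Df(x)=0$.
\item Taylor's formula along the segment $x+sv$ gives $f(y)=\tfrac12 D^2f(x)[v,v]+O(d^3)$ and $Df(y)\cdot v=D^2f(x)[v,v]+O(d^3)$. The $O(d^3)$ constants are uniform because $D^3f$ is bounded on the compact set $\ol{\cN_{\delta_{\cN}}}$.
\item By \ref{A4} and compactness of the unit normal bundle, there are $0<\lambda\le\Lambda$ with $\lambda|v|^2\le D^2f(x)[v,v]\le\Lambda|v|^2$ for all normal $v$. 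The continuous function $(x,\hat v)\mapsto D^2f(x)[\hat v,\hat v]$ is positive on that compact set, so its minimum is positive.
\item Choosing $\delta_f$ small so that the cubic errors are at most $\lambda d^2/4$ yields both lines of \eqref{A31}, with $m_f=\lambda/4$ and $M_f=\Lambda$ (or larger).
\end{itemize}

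\emph{Convexity-type inequality \eqref{fBconvex}.} The point $ty+(1-t)\Pi_{\cN}(y)=x+tv$ has projection $x$ and distance $td<\delta_f$. By the second line of \eqref{A31}, $f(x+tv)\le M_f t^2d^2$, and also $d^2\le f(y)/m_f$. Therefore $f(x+tv)\le (M_f/m_f)t^2 f(y)$. Replacing $M_f$ by $\max(M_f,M_f/m_f)$ gives \eqref{fBconvex}; the enlargement is harmless for \eqref{A31}, whose upper bounds only weaken.

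The only delicate point is making the Taylor remainders and the eigenvalue bounds uniform. Compactness of $\cN$ handles this, and \ref{A2}–\ref{A3} are not needed.
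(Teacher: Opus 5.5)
Your proof is correct and follows essentially the same route as the paper. The paper cites \cite[Lemmas 2.1--2.3]{MRS21} for the projection and for \eqref{A31}, where you give the standard tubular-neighbourhood and uniform Taylor arguments yourself; it then proves \eqref{fBconvex} as you do, bounding $f$ quadratically along the normal segment $s\mapsto f(\Pi_{\cN}(y)+s\,\tilde{y}/|\tilde{y}|)$, using $|\tilde{y}|^2\le Cf(y)$, and absorbing the constant ratio into $M_f$. One cosmetic slip: with cubic errors at most $\lambda d^2/4$, the upper bound on $Df(y)\cdot v$ is $(\Lambda+\lambda/4)d^2$, so $M_f=\Lambda$ must be enlarged (e.g.\ to $2\Lambda$), which your ``or larger'' already allows.
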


\begin{proof}
The existence and smoothness of $\Pi_{\cN}$ follow from \cite[Lemma 2.1]{MRS21}. The estimates in \eqref{A31} follow from \cite[Lemmas 2.2 and 2.3]{MRS21}, together with \ref{A4}. It remains to prove \eqref{fBconvex}. Fix $ y\in\cN_{\delta_f} $ and set $ \wt{y}:=y-\Pi_{\cN}(y) $. If $ |\wt{y}|=0 $, then $y\in\cN$ and \eqref{fBconvex} is trivial. Hence, we assume $ |\wt{y}|>0 $. Define
\[
g(s):=f\left(\Pi_{\cN}(y)+s\frac{\wt{y}}{|\wt{y}|}\right),\quad s\in\R.
\]
By Taylor expansion at $s=0$ (possibly after reducing $\delta_f$), we have
\[
g(s)=g(0)+g'(0)s+\frac{g''(0)}{2}s^2+R(s),\quad |R(s)|\leq C|s|^3,
\]
for any $ s\in(0,\delta_f) $. Since $\Pi_{\cN}(y)\in\cN$, assumption \ref{A1} implies $g(0)=g'(0)=0$. Moreover, by the smoothness of $f$ and the compactness of $\cN$, we have $|g''(0)|\leq C$. It follows that $ g(s)\leq Cs^2 $ for any $ s\in(0,\delta_f) $. Taking $ s=t|\wt{y}| $ with $ t\in[0,1] $, we obtain
\[
f(\Pi_{\cN}(y)+t(y-\Pi_{\cN}(y)))=g(t|\wt{y}|)\leq Ct^2|\wt{y}|^2.
\]
Using \eqref{A31}, we have $|\wt{y}|^2\leq C f(y)$, and therefore \eqref{fBconvex} holds.
\end{proof}

\subsection{Topology of the manifold \texorpdfstring{$ \cN $}{}}

In this subsection, we introduce the free homotopy classes in $\cN$ and the associated concepts. These notions go back to \cite{Mer79}; see also \cite{Can15,MRS21,MRS22}. We recall them here for convenience.

\begin{defn}\label{definitionfreehomotopyclass}
Let $\ell_1,\ell_2\in C^0(\Ss^1,\cN)$ be two loops. We say that $\ell_1$ and $\ell_2$ are freely homotopic if there exists $G\in C^0([0,1]\times\Ss^1,\cN)$ such that
\[
G(0,\cdot)=\ell_1\quad\text{and}\quad G(1,\cdot)=\ell_2.
\]
We denote this by $\ell_1\sim_{\cN}\ell_2$. This defines an equivalence relation on $C^0(\Ss^1,\cN)$. The set of free homotopy classes is defined by
\[
[\Ss^1,\cN]:=C^0(\Ss^1,\cN)/\sim_{\cN}.
\]
For $\ell\in C^0(\Ss^1,\cN)$, we denote its class by $[\ell]_{\cN}$.
\end{defn}

Unlike homotopy classes defining the fundamental group $\pi_1(\cN,x)$, free homotopy does not fix a base point. It follows from \cite[Exercise 6, p. 38]{Hat02} that there is a natural bijection
\be
[\Ss^1,\cN]\longleftrightarrow\{\text{conjugacy classes of }\pi_1(\cN)\}.\label{bijection}
\ee

\begin{defn}\label{Definitionplus}
Let $\cP([\Ss^1,\cN])$ be the power set of $[\Ss^1,\cN]$. We define an operation
\[
+:[\Ss^1,\cN]\times[\Ss^1,\cN]\to\cP([\Ss^1,\cN])
\]
as follows. Let $\{B_{r_i}^2(x_i)\}_{i=0}^2$ be three balls in $\R^2$ such that
\[
B_{r_1}^2(x_1)\cup B_{r_2}^2(x_2)\subset B_{r_0}^2(x_0)\quad\text{and}\quad\overline{B}_{r_1}^2(x_1)\cap \overline{B}_{r_2}^2(x_2)=\emptyset.
\]
Given $\al,\beta\in[\Ss^1,\cN]$, we define $\al+\beta$ as the set of all $\sg\in[\Ss^1,\cN]$ for which there exists $G\in C^0(\ol{B}_{r_0}^2(x_0),\cN)$ such that
\[
[G|_{\pa B_{r_1}^2(x_1)}]_{\cN}=\al,\quad
[G|_{\pa B_{r_2}^2(x_2)}]_{\cN}=\beta,\quad\text{and }
[G|_{\pa B_{r_0}^2(x_0)}]_{\cN}=\sg.
\]
\end{defn}

By \eqref{bijection}, each $\al\in[\Ss^1,\cN]$ corresponds to a conjugacy class of $\pi_1(\cN)$. For $\al,\beta\in[\Ss^1,\cN]$, the operation $\al+\beta$ admits the algebraic interpretation
\begin{equation} \label{conjugacy}
\al+\beta=\{\text{conjugacy classes of }ab:a\in\al,\,\,b\in\beta\} ,
\end{equation}
where we have identified~$\alpha$, $\beta$ with subsets of~$\pi_1(\cN)$
(conjugacy classes) on the right-hand side.
(See e.g.~\cite[Section~III.C]{Mer79} or~\cite[Lemma~2.2]{Can15} for a proof of~\eqref{conjugacy}.) In particular, $+$ is commutative, although it may be multi-valued when $\pi_1(\cN)$ is non-Abelian. Nevertheless, this operation still retains some of the properties that are distinctive of a group. More precisely, we have:
\begin{enumerate}[label=(\roman*)]
 \item there exists~$\va\in [\Ss^1,\cN]$
 (the homotopy class of constant paths) such that for all~$\alpha\in [\Ss^1,\cN]$,
 $\va + \alpha = \{\alpha\}$;
 \item for all~$\alpha\in [\Ss^1,\cN]$, there exists~$-\alpha\in [\Ss^1,\cN]$
 such that~$\alpha - \alpha \ni \va$;
 \item the operation~$+$ is associative, that is, we have
 \[
  (\alpha + \beta) + \gamma := \bigcup_{\sigma\in\alpha + \beta} (\sigma + \gamma)
  = \bigcup_{\sigma\in\beta + \gamma} (\alpha + \sigma)
  =: \alpha + (\beta + \gamma)
 \]
 for all~$\alpha$, $\beta$, $\gamma$ in~$[\Ss^1,\cN]$;
 \item the operation~$+$ is commutative, that is, we have
 $\alpha + \beta = \beta + \alpha$
 for all~$\alpha$, $\beta$ in~$[\Ss^1,\cN]$.
\end{enumerate}
All of these properties can be proved in a rather straightforward way using the characterization~\eqref{conjugacy} of the operation. For instance, Property~(iv) follows from the identity~$ab = (aba^{-1})a$, valid for all~$a$, $b$ in~$\pi_1(\cN)$

For $\sg\in[\Ss^1,\cN]$, define
\[
E_{\op{min}}(\sg):=\inf\left\{\frac{1}{2}\int_{\Ss^1}|u'|^2\ud\HH^1:u\in H^1(\Ss^1,\cN),\,\, [u]_{\cN}=\sg\right\}.
\]
Using the direct method of the calculus of variations, we see that the infimum is attained by a geodesic representative of the class $\sg$.

\begin{defn}\label{DefinitionEsg}
For $ \sg\in[\Ss^1,\cN] $, define the norm $ |\cdot|_* $ by
\be
|\sg|_*:=\inf\left\{\sum_{i=1}^kE_{\op{min}}(\sg_i):\sg\in\sum_{i=1}^k\sg_i,\,\,\sg_i\in[\Ss^1,\cN],\,\,k\in\Z_+\right\}.\label{normsg}
\ee
\end{defn}

Since by \ref{A2}, $\pi_1(\cN)$ is finite, the infimum in \eqref{normsg} is achieved. We refer to $|\cdot|_*$ as a norm, since it satisfies the following properties:
\begin{enumerate}[label=(\alph*)]
\item $|\sg|_*=0$ if and only if $\sg=0$, where $0$ denotes the trivial class.
\item If $\sg,\sg'\in[\Ss^1,\cN]$ and $0\in\sg+\sg'$, then $|\sg|_*=|\sg'|_*$.
\item If $\sg\in\sg_1+\sg_2$, then $|\sg|_*\leq |\sg_1|_*+|\sg_2|_*$.
\end{enumerate}
Moreover, there exists a constant $c_0=c_0(\cN)>0$ such that
\be
\inf_{\sg\in[\Ss^1,\cN],\,\,\sg\neq 0}|\sg|_*=\min_{\sg\in[\Ss^1,\cN],\,\,\sg\neq 0}|\sg|_*>c_0.\label{c0geq}
\ee
Property~\eqref{c0geq} follows from the compact embedding~$H^1(\Ss^1)\hookrightarrow C(\Ss^1)$: if~\eqref{c0geq} were false, there would be a sequence of non-null-homotopic maps~$\Ss^1\to\cN$ that converge strongly in~$H^1(\Ss^1)$, and hence uniformly, to a constant; this is a contradiction because homotopy classes are stable with respect to uniform convergence. Property~(a) follows from~\eqref{c0geq}, while Properties~(b) and~(c) are direct consequences of Definition~\ref{DefinitionEsg}.

\begin{rem}\label{remequvalence}
In \cite{MRS21,MRS22}, the authors use the notion of singular energy $\cE^{\op{sg}}(\cdot)$. This is equivalent to the norm $|\cdot|_*$.
\end{rem}

We conclude with the following extension lemma.

\begin{lem}\label{propextension}
Let $N\in\Z_+$ and $\{\sg_i\}_{i=0}^N\subset[\Ss^1,\cN]$. Let $\{B_{r_i}^2(x_i)\}_{i=0}^N$ be balls in $\R^2$ such that
\[
\bigcup_{i=1}^N B_{r_i}^2(x_i)\subset B_{r_0}^2(x_0),\quad\text{and}\quad
B_{r_i}^2(x_i)\cap B_{r_j}^2(x_j)=\emptyset\text{ for any }i\neq j.
\]
Define
\[
U:=B_{r_0}^2(x_0)\backslash\(\bigcup_{i=1}^N \ol{B}_{r_i}^2(x_i)\).
\]
Assume that $\sigma_0\in\sum_{i=1}^N\sigma_i$. Then there exists $G\in C^1(\ol{U},\cN)$ such that the following properties hold.
\begin{itemize}
\item $[G|_{\pa B_{r_i}^2(x_i)}]_{\cN}=\sg_i$ for any $i\in\Z\cap[0,N]$.
\item $\|\nabla G\|_{L^\infty(U)}\leq C$, where $C>0$ depends only on $N,\cN$, and $U$.
\end{itemize}
\end{lem}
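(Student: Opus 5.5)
We build $G$ by first constructing a continuous map with the prescribed free homotopy classes on the boundary circles, then smoothing it; the gradient bound comes from the fact that only finitely many configurations of homotopy data occur.

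\textbf{Step 1: reduction to a fixed model domain.} The perforated disk $\ol{U}$ is $C^1$-diffeomorphic to a fixed model domain $\ol{U}_N$, say the unit disk with $N$ small disjoint disks removed along a diameter. The diffeomorphism can be chosen with derivative bounds depending only on $U$. Composing with it changes gradients by a factor depending only on $U$, so it suffices to build $G$ on $\ol{U}_N$ with a bound depending on $N$ and $\cN$.

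\textbf{Step 2: algebraic construction.} Fix a base point $p\in\cN$. By \eqref{conjugacy} and associativity (iii), the hypothesis $\sg_0\in\sum_{i=1}^N\sg_i$ gives elements $a_i\in\sg_i\subset\pi_1(\cN,p)$ with $a_1a_2\cdots a_N\in\sg_0$. The domain $\ol{U}_N$ deformation retracts onto a wedge of $N$ circles through a base point $q$. The $i$-th circle encloses the $i$-th hole, and the product of these loops, in order, is homotopic to the outer boundary loop.
\begin{itemize}
\item Define a continuous map on the wedge sending the $i$-th circle to a loop representing $a_i$.
\item Extend it to $\ol{U}_N$ by composing with the retraction.
\end{itemize}
Then $[G|_{\pa B_{r_i}}]_{\cN}$ is the free class of $a_i$, namely $\sg_i$. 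On the outer circle the class is the conjugacy class of $a_1\cdots a_N$, namely $\sg_0$. This is the same argument that underlies \eqref{conjugacy}, iterated $N$ times.

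\textbf{Step 3: smoothing with a uniform bound.} This step carries the only real content. The continuous map from Step 2 can be replaced by a $C^1$ map into $\cN$ in the same homotopy class: mollify, which keeps the image within $\delta_{\cN}$ of $\cN$ if the mollification is fine enough, then compose with $\Pi_{\cN}$ from \eqref{Nearestpointprojection}. Free homotopy classes are preserved because the mollified map is uniformly close to the original.

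The gradient bound must be uniform in the data $\{\sg_i\}$. This holds because, by \ref{A2}, $\pi_1(\cN)$ and hence $[\Ss^1,\cN]$ are finite. So there are only finitely many tuples $(a_1,\dots,a_N)$ to consider. For each tuple we fix one $C^1$ model map on $\ol{U}_N$ and take the maximum of its Lipschitz constants, giving a constant $C(N,\cN)$.

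Combining this with the $U$-dependent diffeomorphism of Step 1 yields $\|\na G\|_{L^\ift(U)}\le C(N,\cN,U)$.
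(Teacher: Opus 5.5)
Your proposal is correct and follows essentially the same route as the paper. Both arguments produce a continuous $\cN$-valued map with the prescribed boundary classes, mollify and compose with $\Pi_{\cN}$, and then transfer to $U$ by a change of variables; the paper shrinks to $U_{\rho}$ and composes with a bi-Lipschitz map $\ol{U}\to\ol{U}_{\rho_0/2}$ to avoid mollifying up to the boundary, while you pass through a diffeomorphism to a model domain. Your wedge-of-circles construction of the continuous map for $N$ holes, and your finiteness argument making the constant uniform in $\{\sg_i\}$, supply details the paper leaves implicit; the one thing you should add is that the continuous map must be defined on a slightly larger perforated disk before you mollify.
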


\begin{proof}
Let $D_i:=B_{r_i}^2(x_i)$ for $i\in\Z\cap[0,N]$. For $s>0$, write $sD_i:=B_{sr_i}^2(x_i)$. By Definition \ref{Definitionplus}, there exists $G_0\in C^0(\ol{U},\cN)$ such that $[G_0|_{\pa D_i}]_{\cN}=\sg_i$ for any $i\in\Z\cap[0,N]$. There exists $\rho_0>0$, depending only on $U$, such that
\[
\bigcup_{i=1}^N (1+\rho_0)D_i\subset (1-\rho_0)D_0,\quad\text{and}\quad
(1+\rho_0)D_i\cap(1+\rho_0)D_j=\emptyset\text{ for any }i\neq j.
\]
For $\rho\in(0,\rho_0)$, define
\[
U_\rho:=(1-\rho)D_0\backslash\(\bigcup_{i=1}^N (1+\rho)\ol{D}_i\).
\]
By convolution, we construct a sequence $\{G_k\}_{k\in\Z_+}\subset C^1(U,\R^m)$ such that $G_k\to G_0$ uniformly in $U_{\f{\rho_0}{4}}$. Hence
\[
\lim_{k\to+\ift}\(\sup_{\ol{U}_{\f{\rho_0}{4}}}\dist(G_k(x),\cN)\)=0.
\]
For sufficiently large $k$, we have the following:
\begin{itemize}
\item $\Pi_{\cN}\circ G_k\in C^1(U_{\f{\rho_0}{4}},\cN)$;
\item $[\Pi_{\cN}\circ G_k|_{\pa((1+\f{\rho_0}{2})D_i)}]_{\cN}=\sg_i$ for $ i\in\Z\cap[1,N] $ and $[\Pi_{\cN}\circ G_k|_{\pa((1-\f{\rho_0}{2})D_0)}]_{\cN}=\sg_0$.
\end{itemize}
Set $H:=\Pi_{\cN}\circ G_k\in C^1(\ol{U}_{\f{\rho_0}{2}},\cN)$. Let $\Phi:\ol{U}\to\ol{U}_{\f{\rho_0}{2}}$ be a bi-Lipschitz map. Then $G:=H\circ\Phi$ satisfies the required properties.
\end{proof}

\subsection{Extension theory}

In this section, we present some extension results that will be used later to construct suitable competitors. Let $ \wt{\cN} $ be the universal covering space of $ \cN $, and let $ \pi:\wt{\cN}\to\cN $ be the covering map. Recall that $ \cN $ is a smooth compact Riemannian manifold, isometrically embedded in $ \R^{\ell} $, and that its fundamental group $ \pi_1(\cN) $ is finite. Hence, $ \wt{\cN} $ is also smooth and compact. We equip $\wt{\cN}$ with the pullback metric $ \wt{g}_{\wt{\cN}}:=\pi^*g_{\cN} $, defined by
\be
\wt{g}_{\wt{\cN}}|_x(v,w)
= g_{\cN}|_{\pi(x)}(\ud\pi_x(v),\ud\pi_x(w)), \quad \text{for any } v,w\in T_x\wt{\cN}.\label{ComparisonofMetics}
\ee
Then the covering map $\pi:(\wt{\cN},\wt{g}_{\wt{\cN}})\to(\cN,g_{\cN})$ is a local isometry.

\begin{lem}\label{ExtensionLemma1}
Let $ r>0 $, and let $ g\in H^1(\pa B_r^2,\cN) $ be homotopically trivial. Then there exists $ u\in H^1(B_r^2,\cN) $ such that $ u|_{\pa B_r^2}=g $ and
\be
\|\na u\|_{L^2(B_r^2)}^2 \leq Cr\|\na_{\pa B_r^2}g\|_{L^2(\pa B_r^2)}^2,
\label{ExtensionLemma1eq}
\ee
where $ C>0 $ depends only on $ \cN $.
\end{lem}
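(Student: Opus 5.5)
By scaling we reduce to $r=1$. Indeed, if $g\in H^1(\pa B_r^2,\cN)$, set $g_1(x):=g(rx)$. Then $\|\na_{\pa B_1^2}g_1\|_{L^2(\pa B_1^2)}^2=r\|\na_{\pa B_r^2}g\|_{L^2(\pa B_r^2)}^2$. The Dirichlet energy on two-dimensional disks is scale invariant, so an extension $u_1$ of $g_1$ gives the extension $u(x):=u_1(x/r)$ with $\|\na u\|_{L^2(B_r^2)}=\|\na u_1\|_{L^2(B_1^2)}$. Hence it suffices to construct $u$ on $B_1^2$ with $\|\na u\|_{L^2(B_1^2)}^2\le C\|g'\|_{L^2(\pa B_1^2)}^2$.

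The plan is to lift $g$ to the universal cover $\wt{\cN}$ and then extend there. Since $g\in H^1(\Ss^1)\subset C^{0,1/2}$ is continuous and homotopically trivial, the lifting property of the covering $\pi:\wt{\cN}\to\cN$ gives a continuous closed loop $\wt g:\Ss^1\to\wt{\cN}$ with $\pi\circ\wt g=g$. Because $\pi$ is a local isometry for the metric \eqref{ComparisonofMetics}, $\wt g\in H^1$ and $|\wt g'|=|g'|$ a.e., with lengths and energies measured intrinsically. We regard $\wt{\cN}$ as embedded smoothly in some Euclidean space, where intrinsic and extrinsic norms are comparable. Then we look for $\wt u\in H^1(B_1^2,\wt{\cN})$ extending $\wt g$ with $\|\na\wt u\|_{L^2}^2\le C\|\wt g'\|_{L^2}^2$ and set $u:=\pi\circ\wt u$. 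The chain rule for the smooth map $\pi$ preserves the bound.

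For the extension in $\wt{\cN}$, which is compact and simply connected, we split into two cases according to $L:=\|g'\|_{L^2(\pa B_1^2)}$.

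\emph{Small energy, $L\le\delta$.} By Cauchy--Schwarz, $\operatorname{osc}\wt g\le\sqrt{2\pi}L$. Fix $p=\wt g(1)$ and work in a geodesic normal chart around $p$, or use the nearest point projection onto $\wt{\cN}$. The homogeneous extension $\wt u(x)=\Pi\big(p+\phi(|x|)(\wt g(x/|x|)-p)\big)$ stays in a tubular neighborhood when $\phi$ is a cutoff; an explicit computation could alternatively use the harmonic extension composed with $\Pi$. The harmonic extension $h$ satisfies $\|\na h\|_{L^2}^2\le C\|\wt g\|_{\dot H^{1/2}}^2\le CL^2$, and by the maximum principle $h$ takes values in the convex hull of $\wt g(\Ss^1)$. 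That hull lies within distance $\sqrt{2\pi}L<\delta_{\wt{\cN}}$ of $\wt{\cN}$, so $\wt u:=\Pi\circ h$ is well defined and $\|\na\wt u\|_{L^2}^2\le CL^2$.

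\emph{Large energy, $L>\delta$.} It suffices to produce an extension with energy at most $C(L+L^2)$, since $L\le L^2/\delta$ in this regime. Parametrize the disk in polar coordinates and take $\wt u(\rho,\theta)=H(1-\rho,\theta)$, where $H$ is a homotopy contracting $\wt g$ to a point with Lipschitz control. We first contract $\wt g$ to the constant loop at $p$ via $\wt g$ itself: $H_1(t,\theta)=\wt g((1-t)\theta)$ for $\theta\in[0,2\pi]$ gives a based loop. More simply, take the cone: define $\wt u$ on the annulus $\tfrac12<\rho<1$ as $\wt u(\rho,\theta)=\wt g(\theta_0+(2\rho-1)(\theta-\theta_0))$, which is not closed. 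Instead we use the following route. Since $\wt{\cN}$ is compact and simply connected, any closed curve $\gamma$ of length $\ell$ can be filled, via compactness and a uniform geodesic subdivision into at most $C(1+\ell)$ short arcs, by a Lipschitz disk with area-type energy $\le C(1+\ell)^2$. This is done by connecting each subdivision point to $p$ by minimizing geodesics of length $\le\operatorname{diam}\wt{\cN}$, and filling each thin geodesic triangle. Each triangle is a closed curve of length $\le 2\operatorname{diam}+\delta$, and the family of such curves is compact in $C^0$ and null-homotopic, so they have uniformly bounded Lipschitz fillings. Reparametrizing with $\ell\le\sqrt{2\pi}L$ gives energy $\le C(1+L)^2\le CL^2$.

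The main obstacle is this uniform Lipschitz filling of short triangles in the simply connected compact $\wt{\cN}$. We would prove it by a contradiction/compactness argument combined with the small-energy case: curves of length $\le\Lambda$ form an equicontinuous family, and null-homotopies can be chosen uniformly after a fine subdivision using Lemma~\ref{propextension}-type smoothing. Finally, $\pi$ being a local isometry yields \eqref{ExtensionLemma1eq} with $C=C(\cN)$.
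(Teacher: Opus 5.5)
Your lift of $g$ to a closed loop $\wt g$ in $\wt\cN$ with $|\wt g'|=|g'|$ is exactly the paper's first step. From there the paper simply invokes the extension lemma for simply connected targets \cite[Lemma~2.13]{BSV25} and composes with $\pi$, while you try to prove that extension by hand. Your scaling reduction is fine. Your small-energy case (harmonic extension, maximum principle, projection onto $\wt\cN$) is correct. So is the compactness claim that loops of bounded length in the compact simply connected $\wt\cN$ have null-homotopies with uniformly bounded Lipschitz constant.

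The gap is in the large-energy case. The disk you build there is a Lipschitz filling of the \emph{constant-speed} reparametrization $\lambda$ of $\wt g$, where $\wt g=\lambda\circ s$ and $s(\theta)=\frac{2\pi}{\ell}\int_0^\theta|\wt g'|$. A Lipschitz disk cannot have the merely $H^1$ trace $\wt g$. The phrase ``reparametrizing with $\ell\le\sqrt{2\pi}L$'' therefore hides the one delicate point: passing from $\lambda$ back to $\wt g$ at a Dirichlet cost $\le C\|\wt g'\|_{L^2}^2$.

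This step is not automatic. Take the obvious collar $\wt g\big(t\sigma+(1-t)\phi(\sigma)\big)$ with $\phi=s^{-1}$. Its angular energy is $\int|\wt g'(\tau)|^2\big(t+(1-t)\phi'(\sigma)\big)\,\ud\tau$, where $\tau=t\sigma+(1-t)\phi(\sigma)$ and $\phi'(\sigma)=\ell/(2\pi|\wt g'(\phi(\sigma))|)$. The factor $\phi'(\sigma)$ measures the speed of $\wt g$ at a different point than $\tau$. For suitable $\wt g$ (a fast arc followed by an almost stationary one) this energy is arbitrarily large compared with $\int|\wt g'|^2$. If $\wt g$ is constant on an interval, the interpolated loops can even fail to be continuous.

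A short way to close the gap is to reparametrize in the domain rather than on the boundary.
\begin{itemize}
\item Build $\Lambda\colon\ol B_1^2\to\wt\cN$ with $\Lambda(e^{i\sigma})=\lambda(\sigma)$ and $\mathrm{Lip}\,\Lambda\le C\ell$. For small $\ell$ use a geodesic cone in a convex ball. For large $\ell$ use your construction with $N\sim\ell/\delta$ equal sectors and uniformly Lipschitz triangle fillings.
\item Let $S$ be the harmonic extension of $\theta\mapsto e^{is(\theta)}$. It takes values in $\ol B_1^2$ and satisfies $\int_{B_1^2}|\na S|^2\le\int_{\Ss^1}|s'|^2=\frac{4\pi^2}{\ell^2}\int_{\Ss^1}|\wt g'|^2$.
\item Set $\wt u:=\Lambda\circ S$.
\end{itemize}
Then $\wt u=\lambda\circ s=\wt g$ on $\pa B_1^2$. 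Moreover $\int_{B_1^2}|\na\wt u|^2\le(\mathrm{Lip}\,\Lambda)^2\int_{B_1^2}|\na S|^2\le C\int_{\Ss^1}|\wt g'|^2$ uniformly in $\ell$, so the small/large-energy split is not even needed. With this step added, your route gives a self-contained proof of what the paper imports from \cite{BSV25}.
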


\begin{proof}
By Sobolev embedding, $ H^1(\pa B_r^2,\cN)\subset C^{0,\f{1}{2}}(\pa B_r^2,\cN) $. Since $ g $ is homotopically trivial, there exists a lift $ \wt{g}\in C^0(\pa B_r^2,\wt{\cN}) $ such that $ g=\pi\circ\wt{g} $. Using \eqref{ComparisonofMetics}, we obtain $ \wt{g}\in H^1(\pa B_r^2,\wt{\cN}) $. The conclusion now follows from \cite[Lemma 2.13]{BSV25}.
\end{proof}

\begin{lem}\label{ExtensionLemma2}
Let $ r>0 $ and $ g\in H^1(\pa B_r,\cN) $, with $B_r = B_r^n$ and $n\geq 3$. Then there exists $ u\in H^1(B_r,\cN) $ such that $ u|_{\pa B_r}=g $, and
\[
\|\na u\|_{L^2(B_r)}^2 \leq Cr^{\f{n-1}{2}}\|\na_{\pa B_r}g\|_{L^2(\pa B_r)},
\]
where $ C>0 $ depends only on $ \cN $ and $ n $.
\end{lem}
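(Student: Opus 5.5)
The plan is to produce an $\R^{m'}$-valued extension whose Dirichlet energy is controlled \emph{linearly} by $\|\na_{\pa B_r}g\|_{L^2}$, then push it into the target with an averaged, Hardt--Kinderlehrer--Lin type retraction on the universal cover. Both sides of the estimate scale like $r^{n-2}$, so after rescaling I assume $r=1$. The linear (rather than quadratic) dependence on $\|\na g\|$ will come from the $H^{1/2}$-interpolation step, using that $g$ takes values in the compact set $\cN$.

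\textbf{Step 1 (lifting).} Since $n\ge 3$, the sphere $\pa B_1$ is simply connected. Because $\pi:\wt{\cN}\to\cN$ is a finite covering by \ref{A2}, I invoke the lifting theorem for Sobolev maps into finite covers (Bethuel--Chiron, valid for $W^{1,p}$ with $p\ge 2$ on simply connected domains). This gives $\wt g\in H^1(\pa B_1,\wt{\cN})$ with $\pi\circ\wt g=g$. Since $\pi$ is a local isometry by \eqref{ComparisonofMetics}, we have $|\na\wt g|=|\na g|$ a.e. I embed $\wt{\cN}$ isometrically in some $\R^{m'}$. It then suffices to extend $\wt g$ into $\wt{\cN}$ with the stated bound and compose with $\pi$.

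\textbf{Step 2 (linear extension bound).} Let $w$ be the harmonic extension of $\wt g$ into $B_1$, which is $\R^{m'}$-valued, and let $\bar g$ be the mean of $\wt g$. By the trace theorem, the interpolation inequality $\|h\|_{H^{1/2}}^2\le C\|h\|_{L^2}\|h\|_{H^1}$, and Poincaré on the sphere,
\[
\int_{B_1}|\na w|^2\le C[\wt g-\bar g]_{H^{1/2}(\pa B_1)}^2\le C\|\wt g-\bar g\|_{L^2}\|\na\wt g\|_{L^2}\le C\,\mathrm{diam}(\wt{\cN})\,\|\na g\|_{L^2(\pa B_1)}.
\]
The last step uses only that $\wt{\cN}$ is compact, so $\|\wt g-\bar g\|_{L^\infty}\le \mathrm{diam}(\wt{\cN})$. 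No case distinction is needed.

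\textbf{Step 3 (projection, the main obstacle).} Since $\wt{\cN}$ is simply connected, $\pi_0(\wt{\cN})=\pi_1(\wt{\cN})=0$. By the construction of Hardt--Lin (see also \cite{HKL86}), there is a compact $(m'-3)$-dimensional polyhedron $X$ and a locally Lipschitz retraction $P:\R^{m'}\backslash X\to\wt{\cN}$ (restricted to a large ball containing the convex hull of $\wt{\cN}$) with $|\na P(y)|\le C\dist(y,X)^{-1}$. Because $X$ has codimension $3$, $\dist(\cdot,X)^{-2}$ is locally integrable. Hence, for $\delta>0$ small, Fubini gives
\[
\dashint_{B^{m'}_\delta}\int_{B_1}|\na(P(w-a))|^2\,\ud x\,\ud a\le C\int_{B_1}|\na w|^2 ,
\]
and I choose $a$ for which the inner integral is at most the average. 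The trace of $P(w-a)$ is $P(\wt g-a)$, not $\wt g$. To fix this, I use that $P(\cdot-a)|_{\wt{\cN}}$ is $C^1$-close to the identity for $|a|$ small, hence a diffeomorphism $\psi_a$ of $\wt{\cN}$ with uniformly bounded inverse. The map $\wt u:=\psi_a^{-1}\circ P(w-a)$ then has trace $\wt g$ and energy at most $C\|\na g\|_{L^2(\pa B_1)}$.

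Finally, $u:=\pi\circ\wt u$ has the required properties, and scaling back yields the factor $r^{\frac{n-1}{2}}$. The delicate points are the existence of the retraction with the codimension-three singular set, which is exactly where simple connectivity of $\wt{\cN}$ enters, and the small-$a$ diffeomorphism correction; Step 2 is routine.
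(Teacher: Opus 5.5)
Your proposal is correct, but it takes a genuinely different route from the paper in the second half. The first step is the same in substance: the paper also lifts $g$ to the compact universal cover $\wt{\cN}$ using \cite[Theorem 1]{BC07}. It does so on the annulus $B_{11/10}\backslash\ol{B}_{9/10}$, applying the theorem to the $0$-homogeneous extension of $g$, and then takes the trace of the (homogeneous) lift on $\pa B_1$. This detour avoids needing the lifting theorem with the sphere itself as the domain, which is what you invoke directly; that is harmless if the manifold version is available, and otherwise the same annulus trick repairs it. For the extension into $\wt{\cN}$ with the linear bound $\|\na u\|_{L^2}^2\le C\|\na g\|_{L^2}$, the paper simply cites \cite[Lemma 2.15]{BSV25}. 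You instead prove it by the Hardt--Kinderlehrer--Lin method \cite{HKL86}:
\begin{itemize}
\item a harmonic extension;
\item the interpolation $[h]_{H^{1/2}}^2\le C\|h\|_{L^2}\|h\|_{H^1}$, combined with the $L^\infty$ bound from compactness of $\wt{\cN}$, which is exactly where the linear rather than quadratic dependence comes from;
\item an averaged Hardt--Lin retraction whose singular set has codimension $3$ because $\wt{\cN}$ is $1$-connected;
\item the small-$a$ diffeomorphism correction.
\end{itemize}
Your route is self-contained and shows precisely why finiteness of $\pi_1(\cN)$ (compactness of the cover) and simple connectivity of $\wt{\cN}$ are needed; the paper's route is shorter. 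Two details should be made explicit in your Step 3. First, the retraction $P$ must agree with the nearest-point projection, hence be smooth, on a neighborhood of $\wt{\cN}$, so that $\psi_a$ is $C^1$-close to the identity; the Hardt--Lin construction provides this. Second, identifying the trace of $P(w-a)$ with $P(\wt g-a)$ needs a short approximation argument, since $w$ need not stay near $\wt{\cN}$ close to $\pa B_1$. For instance, use Lipschitz truncations of $P$ near $X$ with $|\na P_k|\le C\dist(\cdot,X)^{-1}$ and dominated convergence, valid for a.e.\ $a$, so $a$ should also be chosen in this full-measure set.
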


\begin{proof}
By scaling, it suffices to consider the case $ r=1 $. Define $ U:=B_{\f{11}{10}}\backslash \ol{B}_{\f{9}{10}} $ and define $ \wt{g}\in H^1(U,\cN) $ by $ \wt{g}(x):=g(\f{x}{|x|}) $. Since $ g\in H^1(\pa B_1,\cN) $, it follows that $ \wt{g}\in H^1(U,\cN) $.

Since $ U\subset\R^n $ is a smooth bounded domain and is simply connected for $ n\geq 3 $, it follows from \cite[Theorem 1]{BC07} that there exists a lift $ \wt{\vp}:U\to\wt{\cN} $ such that $ \wt{g}=\pi\circ\wt{\vp} $. Note that $ \wt{g} $ is homogeneous by construction, and hence $ \wt{\vp} $ is also homogeneous. More precisely, for any $ x\in U $ and $ \lda>0 $ such that $ \lda x\in U $, we have $ \wt{\vp}(\lda x)=\wt{\vp}(x) $. Therefore, the trace $ \vp:=\wt{\vp}|_{\pa B_1} $ is well-defined and belongs to $ H^1(\pa B_1,\wt{\cN}) $, with $ g=\pi\circ\vp $. The result now follows from \cite[Lemma 2.15]{BSV25}.
\end{proof}

\subsection{A technical lemma on homotopy classes for \texorpdfstring{$ H^1 $}{} maps}

We now study the homotopy behavior of the $ H^1 $-maps defined on a cylinder~$ \Ss^1\times B_1^k $. It follows from Fubini's theorem and Sobolev embedding that for $ \HH^k $-a.e. $ x\in B_1^k $, the slice $ u(\cdot,x) $ defines a continuous map from $ \Ss^1 $ to $ \cN $, and hence determines a homotopy class in $ [\Ss^1,\cN] $. A natural question is whether this homotopy class depends on $ x $. For homotopy classes that can be represented by topological degree, this result follows from~\cite[Theorem~1]{BLMN99}.

The next lemma gives a negative answer to this question in greater generality and provides a well-defined homotopy class associated with $ u $, which will be used in the sequel.

\begin{lem}\label{H1homotopyclass}
Assume that $ u\in H^1(\Ss^1\times B_1^k,\cN) $, $ k\in\Z_+ $. Then there exists $ \sg\in[\Ss^1,\cN] $ such that for $ \HH^{k} $-a.e. $ x\in B_1^k $, $ [u(\cdot,x)]_{\cN}=\sg $.
\end{lem}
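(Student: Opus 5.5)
We aim to show that the class of the slice $u(\cdot,x)$ does not depend on $x$ for $\HH^k$-a.e. $x\in B_1^k$. The plan is to lift $u$ to the universal cover and reduce to a statement about the transported base point along paths in $B_1^k$, which is simply connected. A more hands-on route via Fubini and two-dimensional slices is also available, and we sketch it first since it uses only Sobolev embedding on one-dimensional sets.

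\emph{Step 1 (good slices).} By Fubini, for $\HH^k$-a.e. $x\in B_1^k$ we have $u(\cdot,x)\in H^1(\Ss^1,\cN)\subset C^{0,\f12}(\Ss^1,\cN)$. Let $G\subset B_1^k$ be the set of such $x$ that are also Lebesgue points of
\[
x\mapsto\int_{\Ss^1}|\na u(\theta,x)|^2\ud\theta .
\]
Fix $x,y\in G$. It suffices to show $[u(\cdot,x)]_{\cN}=[u(\cdot,y)]_{\cN}$ for a.e. pair $(x,y)$, and hence for all $x,y$ in a full-measure subset.

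\emph{Step 2 (homotopy along segments).} Consider the segment $\gamma(t)=(1-t)x+ty$. Perturb it to parallel segments $\gamma_z(t)=\gamma(t)+z$, with $z$ ranging in a small $(k-1)$-disc orthogonal to $y-x$. By Fubini in the coordinates $(\theta,t,z)$, for a.e. $z$ the restriction of $u$ to $\Ss^1\times\gamma_z([0,1])$ is in $H^1$ of a two-dimensional annulus. This is still not continuous, since $H^1$ in dimension two does not embed into $C^0$. We therefore use the VMO approach: $H^1$ maps in dimension two are VMO, and homotopy classes of VMO maps are well defined (Brezis--Nirenberg). Concretely, the averaged maps $u_\delta=\Pi_{\cN}(u*\rho_\delta)$ are continuous and $\cN_{\delta_{\cN}}$-valued for small $\delta$, uniformly on the annulus, because
\[
\lim_{\delta\to0}\sup_{B_\delta(p)}\dashint|u-\bar u|=0
\]
by absolute continuity of $\int|\na u|^2$ on small two-dimensional balls together with Poincaré. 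The continuous family $t\mapsto u_\delta(\cdot,\gamma_z(t))$ is then a free homotopy between the end slices. Finally, $u_\delta(\cdot,\gamma_z(0))\to u(\cdot,\gamma_z(0))$ uniformly, using one-dimensional Sobolev bounds at good endpoints, so the classes agree for suitable $z$. Letting $z\to0$ along good $z$ and using that $x,y$ are Lebesgue points, so that slices near $x$ converge to $u(\cdot,x)$ in $H^1(\Ss^1)$ and hence uniformly, gives the claim. This approximation step is delicate.

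A cleaner alternative, which we would try first, is to avoid all VMO machinery. The domain $\Ss^1\times B_1^k$ is not simply connected, so $u$ need not lift. However, $\R\times B_1^k$ is simply connected, and the map $(s,x)\mapsto u(e^{is},x)$ belongs to $H^1_{\loc}$. By \cite[Theorem 1]{BC07}, applied locally on the bounded simply connected domains $(-L,L)\times B_1^k$, it lifts to $\wt{u}:(-L,L)\times B_1^k\to\wt{\cN}$ in $H^1$. Since $\pi_1(\cN)$ is finite, it acts by deck transformations. For a.e. $x$ the slice $\wt u(\cdot,x)$ is continuous, and
\[
\wt u(s+2\pi,x)=g(x)\,\wt u(s,x)
\]
for a deck transformation $g(x)\in\pi_1(\cN)$. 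The conjugacy class of $g(x)$ is exactly $[u(\cdot,x)]_{\cN}$, by \eqref{bijection}. The function $x\mapsto g(x)$ takes values in a finite set. Moreover, the map
\[
(s,x)\mapsto\dist\bigl(\wt u(s+2\pi,x),\,h\,\wt u(s,x)\bigr)
\]
is in $H^1$ for each fixed deck transformation $h$. So $\mathbf 1_{\{g=h\}}(x)$ is the indicator of the zero set of an $H^1$ function which, by discreteness of the deck orbits, is bounded below by a positive constant off that set. Hence $\mathbf 1_{\{g=h\}}$ is an $H^1$ function taking only the values $0$ and $1$. On the connected set $B_1^k$ such a function is constant. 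Therefore $g$ is a.e. constant, and consequently so is the class $\sg$.

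The main obstacle is justifying that $\mathbf 1_{\{g=h\}}\in H^1$. Concretely, one must show that on $(0,2\pi)\times B_1^k$ the quantity $\dist(\wt u(s+2\pi,x),h\wt u(s,x))$ is, for a.e. $x$, either identically $0$ or bounded below by a uniform $c>0$. For this we will use that deck transformations move points of the compact $\wt{\cN}$ by a distance at least $c>0$, together with $\wt{u}(s+2\pi,\cdot)=g\,\wt u(s,\cdot)$. A function in $H^1$ taking values in $\{0\}\cup[c,\infty)$ has its truncation $\min(\cdot,c)/c$ in $H^1$ with values in $\{0,1\}$, hence constant on connected sets.
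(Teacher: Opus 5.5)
Your lifting argument (the ``cleaner alternative'') is correct, and it is a genuinely different proof from the paper's.

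\emph{The paper's route.} The paper treats $k=1$ separately, via the density of smooth maps in $H^1$ of two-dimensional domains, as in \cite{Can17}. For $k\ge 2$ it joins two good slices $x_1,x_2$ through a third point $x_3$. Lebesgue points of the $L^2(\Ss^1)$-valued maps $x\mapsto u(\cdot,x)$ and $x\mapsto \na u(\cdot,x)$, averaged along almost every ray (Claim 1), produce slices on $\overline{x_jx_3}$ that are $H^1(\Ss^1)$-close, hence homotopic, to $u(\cdot,x_j)$. The $k=1$ case is then applied on the two segments, where $u$ is $H^1$ by Claim 2.

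\emph{Your route.} You unfold $\Ss^1$, lift on a convex cylinder by \cite[Theorem 1]{BC07}, and encode the slice class as the deck transformation $g(x)$ with $\wt u(s+2\pi,x)=g(x)\,\wt u(s,x)$. You then show $g$ is a.e.\ constant by truncating the $H^1$ function $\dist(\wt u(s+2\pi,x),h\,\wt u(s,x))$. This function takes values in $\{0\}\cup[c,\infty)$ because the finite deck group acts freely on the compact $\wt{\cN}$.

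\emph{Comparison.} Your route handles all $k\ge1$ at once and avoids the ray and Lebesgue-point bookkeeping. The price is reliance on the nontrivial Bethuel--Chiron lifting theorem and on \ref{A2} (compactness of $\wt{\cN}$, finiteness of the deck group). The paper's argument uses only Fubini, Lebesgue points and the two-dimensional density theorem, and does not need $\pi_1(\cN)$ to be finite.

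\emph{Minor points to write out.}
\begin{itemize}
\item Take $L>2\pi$; you implicitly use $s\in(0,4\pi)$.
\item $s\mapsto g_s(x)$ is constant because $\{(a,b):\pi(a)=\pi(b)\}$ is the finite disjoint union of the closed graphs of the deck transformations.
\item If \cite{BC07} requires a smooth domain, apply it on smooth convex domains exhausting the cylinder. The set of $(s,x)$ with both $(s,x)$ and $(s+2\pi,x)$ in a convex domain is again convex, hence connected. Different lifts change $g$ only by conjugation, so $\sg$ is unaffected.
\end{itemize}

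I would drop the first, ``hands-on'' sketch, which as written does not work, for two reasons. First, Lebesgue points of the scalar function $x\mapsto\int_{\Ss^1}|\na u(\theta,x)|^2$ do not make nearby slices $H^1(\Ss^1)$-close to $u(\cdot,x)$; you need vector-valued Lebesgue points as in \eqref{defLebesgue}. Second, the endpoints $x+z$, with $z$ in a $(k-1)$-disc, form an $\HH^k$-null set, so no Lebesgue-point information at $x$ transfers to them. The paper's Claim 1 (averaging along rays emanating from $x$) is precisely the device that circumvents this.
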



\begin{rem}\label{Remarkkeq1}
For $ k=1 $, the conclusion follows from arguments similar to \cite[Section 2.2]{Can17}, where the key tool is the density of $ C^{\infty}(\cX,\cN) $ in $ H^1(\cX,\cN) $ for any smooth two-dimensional manifold $ \cX $ with $ C^1 $ boundary (see \cite[Proposition p. 267]{SU83}). When $ k\geq 2 $, such an approximation argument is no longer available in a direct way. Therefore, we adopt a different approach.
\end{rem}

\begin{proof}[Proof of Lemma \ref{H1homotopyclass}]
By Remark \ref{Remarkkeq1}, it suffices to consider the case $ k\geq 2 $. We write points in $ \Ss^1\times B_1^k $ as $ (\theta,x) $, and denote by $ \na $ the derivatives with respect to $ (\theta,x) $. By a standard covering and translation argument, it is enough to prove that there exists $ \sg\in[\Ss^1,\cN] $ such that for $ \HH^{k} $-a.e. $ x\in B_{\f{1}{2}}^k $, one has $ [u(\cdot,x)]_{\cN}=\sg $. Using the Lebesgue differentiation theorem, for $ \HH^k $-a.e. $ x\in B_1^k $,
\be
\lim_{\delta\to 0^+}\f{1}{\delta^k}\int_{B_{\delta}^k(x)}\int_{\Ss^1}(|\na u(\theta,y)-\na u(\theta,x)|^2+|u(\theta,y)-u(\theta,x)|^2)\ud\HH^1(\theta)\ud\HH^k(y)=0.\label{defLebesgue}
\ee
Define
\[
I:=\{x\in B_1^k:x\text{ satisfies }\eqref{defLebesgue}\}.
\]
Then $ \HH^k(B_1^k\backslash I)=0 $. For $ x\in B_{\f{1}{2}}^k $ and $ \eta\in(0,\f{1}{10}) $, define
\begin{align*}
I_x&:=\left\{v\in\Ss^{k-1}:\liminf_{\delta\to 0^+}\(\f{1}{\delta}\int_{(B_{\delta}^k(x)\backslash B_{\f{\delta}{2}}^k(x))\cap\ell_{v}(x)}\int_{\Ss^1}(|\na u-\na u(\cdot,x)|^2+|u-u(\cdot,x)|^2)\ud\HH^1\ud\HH^1\)=0\right\},\\
I_{x,\eta}&:=\left\{v\in\Ss^{k-1}:\int_{(B_1^k\backslash B_{\eta}^k(x))\cap\ell_{v}(x)}\int_{\Ss^1}(|\na u|^2+|u|^2)\ud\HH^1\ud\HH^1<+\ift\right\} \! ,
\end{align*}
where $ \ell_v(x)=\{y\in\R^k:y=x+tv,\,\,t>0\} $.

\smallskip
\noindent\textbf{Claim 1.} Let $ x\in B_{\f{1}{2}}^k\cap I $. Then $ v\in I_x $ for $ \HH^{k-1} $-a.e. $ v\in\Ss^{k-1} $.

Direct computations imply that
\begin{align*}
&\f{1}{\delta}\int_{\Ss^{k-1}}\(\int_{\Ss^1}\int_{(B_{\delta}^k(x)\backslash B_{\f{\delta}{2}}^k(x))\cap\ell_v(x)}(|\na u-\na u(\cdot,x)|^2+|u-u(\cdot,x)|^2)\ud\HH^1\ud\HH^1\)\ud\HH^{k-1}(v)\\
&\quad\quad=\f{1}{\delta}\int_{\Ss^{k-1}}\left[\int_{\Ss^1}\(\int_{\f{\delta}{2}}^{\delta}(|\na u(\cdot,x+tv)-\na u(\cdot,x)|^2+|u(\cdot,x+tv)-u(\cdot,x)|^2)\ud t\)\ud\HH^1\right]\ud\HH^{k-1}(v)\\
&\quad\quad=\f{1}{\delta}\int_{\Ss^1}\(\int_{B_{\delta}^k(x)\backslash B_{\f{\delta}{2}}^k(x)}\f{1}{|y-x|^{k-1}}(|\na u(\cdot,y)-\na u(\cdot,x)|^2+|u(\cdot,y)-u(\cdot,x)|^2)\ud\HH^k(y)\)\ud\HH^1\\
&\quad\quad\leq\f{C}{\delta^{k}}\int_{\Ss^1}\(\int_{B_{\delta}^k(x)}(|\na u-\na u(\cdot,x)|^2+|u-u(\cdot,x)|^2)\ud\HH^k\)\ud\HH^1\to 0^+
\end{align*}
as $ \delta\to 0^+ $, since $ x\in I $. It follows from Fatou's lemma that
\[
\int_{\Ss^{k-1}}\liminf_{\delta\to 0^+}\left[\f{1}{\delta}\(\int_{\Ss^1}\int_{(B_{\delta}^k(x)\backslash B_{\f{\delta}{2}}^k(x))\cap\ell_v(x)}(|\na u-\na u(\cdot,x)|^2+|u-u(\cdot,x)|^2)\ud\HH^1\ud\HH^1\)\right]\ud\HH^{k-1}(v)=0,
\]
which proves the claim.

\smallskip
\noindent\textbf{Claim 2.} Let $ x\in B_{\f{1}{2}}^k\cap I $ and $ \eta\in(0,\f{1}{10}) $. Then $ v\in I_{x,\eta} $ for $ \HH^{k-1} $-a.e. $ v\in\Ss^{k-1} $.

Since $ x\in I $, we have
\begin{align*}
&\int_{\Ss^{k-1}}\left[\int_{\Ss^1}\(\int_{(B_1^k\backslash B_{\eta}^k(x))\cap\ell_v(x)}(|\na u|^2+|u|^2)\ud\HH^1\)\ud\HH^1\right]\ud\HH^{k-1}(v)\\
&\quad\quad\leq\f{C}{\eta^{k-1}}\int_{\Ss^1}\(\int_{B_1^k\backslash B_{\eta}^k(x)}(|\na u|^2+|u|^2)\ud\HH^k\)\ud\HH^1<+\ift.
\end{align*}
Then for $ \HH^{k-1} $-a.e. $ v\in\Ss^{k-1} $,
\[
\int_{\Ss^1}\(\int_{(B_1^k\backslash B_{\eta}^k(x))\cap\ell_v(x)}(|\na u|^2+|u|^2)\ud\HH^1\)\ud\HH^1<+\ift,
\]
implying this claim.

\smallskip
\noindent\textbf{Completion of the proof.} Let $ x_1,x_2\in B_{\f{1}{2}}^k\cap I $. We show that
\be
[u(\cdot,x_1)]_{\cN}=[u(\cdot,x_2)]_{\cN}.\label{ux1ux2samehomotopy}
\ee
Using the above two claims, we choose $ x_3\in B_{\f{1}{2}}^k\backslash\{x_1,x_2\} $ such that for any $ j\in\{1,2\} $,
\be
\f{x_3-x_j}{|x_3-x_j|}\in I_{x_j}\cap\bigcap_{i=1}^{+\ift}I_{x_j,i^{-1}}.\label{choosex3}
\ee
Given points~$x\in B^k$, $y\in B^k$, we will denote by~$\overline{xy}$ the straight line segment of endpoints~$x$, $y$. Fix $ \eta'>0 $. Since $ \frac{x_3-x_j}{|x_3-x_j|}\in I_{x_j} $, we can choose
\[
\delta=\delta(\eta')\in\(0,\f{1}{10}\min\{\HH^1(\ol{x_1x_3}),\HH^1(\ol{x_2x_3}),1\}\)
\]
sufficiently small such that
\be
\int_{\Ss^1}\(\f{1}{\delta}\int_{(B_{\delta}^k(x_j)\backslash B_{\f{\delta}{2}}^k(x_j))\cap\ol{x_jx_3}}(|\na u-\na u(\cdot,x_1)|^2+|u-u(\cdot,x_1)|^2)\ud\HH^1\)\ud\HH^1<\f{\eta'}{100}\label{etaprimeuse}
\ee
for any $ j\in\{1,2\} $. Choose $ x_1',x_1''\in\ol{x_1x_3} $ and $ x_2',x_2''\in\ol{x_2x_3} $ such that
\begin{align*}
\HH^1(\ol{x_1x_1'})=\HH^1(\ol{x_2x_2'})=\f{\delta}{2},\quad\HH^1(\ol{x_1x_1''})=\HH^1(\ol{x_2x_2''})=\delta.
\end{align*}
From \eqref{etaprimeuse}, there exists $ S_j\subset\ol{x_j'x_j''} $ with $ \HH^1(S_j)>0 $ such that for any $ y\in S_j $,
\[
\|u(\cdot,y)-u(\cdot,x_j)\|_{H^1(\Ss^1,\R^m)}<\eta'.
\]
Choosing $ \eta'>0 $ small enough (depending only on $ \cN $), we ensure that for $ \HH^1 $-a.e. $ y\in S_j $, the map $ u(\cdot,y)\in C^0(\Ss^1,\cN) $ and $ [u(\cdot,y)]_{\cN}=[u(\cdot,x_1)]_{\cN} $.

Let $ i_0\in\Z_+\cap[\f{4}{\delta},+\ift) $. Then
\[
\ol{x_1'x_3}\subset B_1^k\backslash B_{i_0^{-1}}(x_1),\quad\ol{x_2'x_3}\subset B_1^k\backslash B_{i_0^{-1}}(x_2).
\]
It follows from \eqref{choosex3} and the definition of $ I_{x,\eta} $ that
\[
\int_{\Ss_1}\(\int_{\ol{x_1'x_3}\cup\ol{x_2'x_3}}(|\na u|^2+|u|^2)\ud\HH^1\)\ud\HH^1<+\ift.
\]
Applying the result for $ k=1 $ to $ u|_{\ol{x_1'x_3}\cup\ol{x_2'x_3}} $, we find $ \sg\in[\Ss^1,\cN] $ such that for $ \HH^1 $-a.e. $ y\in\ol{x_1'x_3}\cup\ol{x_2'x_3} $, $ [u(\cdot,y)]_{\cN}=\sg $. Since $ \HH^1(S_1),\HH^1(S_2)>0 $, this implies \eqref{ux1ux2samehomotopy}.
\end{proof}

\subsection{Lower bound on the energy}

Next, we recall a lower bound for the Ginzburg–Landau-type energy in two-dimensional balls. This estimate goes back to the case $ \cN=\Ss^1 $, where it was first obtained in \cite{San98, Jer99}. It shows that non-trivial topology forces a logarithmic energy cost and provides a lower bound on the energy in terms of the homotopy class.

\begin{lem}\label{LowerBound}
Let $ \va\in(0,\f{r}{2}) $ and $ u\in H^1(B_r^2,\R^m) $. Assume that $ g=u|_{\pa B_r^2}\in H^1(\pa B_r^2,\R^m) $ and $ \dist(g,\cN)<\delta_{\cN} $ on $ \pa B_r^2 $. Then $ \sg:=[\Pi_{\cN}\circ g]_{\cN} $ is well-defined and
\[
E_{\va}(u,B_r^2)+CrE_{\va}(g,\pa B_r^2)\geq|\sg|_*\log\f{r}{\va}-C,
\]
where $ C>0 $ depends only on $ f $ and $ \cN $.
\end{lem}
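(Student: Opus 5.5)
By scaling $x\mapsto x/r$ (which replaces $\va$ by $\va/r$ and multiplies two-dimensional energies by $1$ and boundary energies by $r^{-1}$), it suffices to treat $r=1$, $\va\in(0,\f12)$. The plan is the standard ball-construction / annulus argument of Sandier and Jerrard, adapted to a general target as in \cite{Can15,MRS21}. First I will handle the trivial case $\sg=0$, where the inequality holds with any $C\geq 0$. Since $[\Ss^1,\cN]$ is finite by \ref{A2}, all constants depending on $|\sg|_*$ are uniform.

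\textbf{Step 1: Reduction to a map that is $\cN$-valued near $\pa B_1^2$.} Consider the collar $B_1^2\backslash B_{1-\va}^2$. Define $\tilde u(\rho\theta)$ by interpolating: for $\rho\in[1-\va,1]$ set $\tilde u=\Pi_{\cN}\circ g$ blended linearly toward $g$, i.e. $\tilde u(\rho\theta)=\Pi_\cN(g(\theta))+\f{\rho-(1-\va)}{\va}(g(\theta)-\Pi_\cN(g(\theta)))$. By \eqref{fBconvex} and \eqref{A31}, its energy on the collar is bounded by
\[
C\va\int_{\pa B_1^2}\Big(|\pa_\theta g|^2+\f{\dist(g,\cN)^2}{\va^2}+\f{f(g)}{\va^2}\Big)\leq C\va\,E_\va(g,\pa B_1^2)\cdot\va^{-1}\cdot\va\leq CE_\va(g,\pa B_1^2).
\]
This is where the term $CrE_\va(g,\pa B_r^2)$ comes from. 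The energy of $\Pi_\cN\circ g$ is controlled by $C\|\pa_\theta g\|^2_{L^2}$. Using this comparison, I reduce to the following: $u$ on $B_1^2$ (rescaled slightly) whose boundary trace is the $\cN$-valued loop $\Pi_\cN\circ g$ in class $\sg$, paying $CE_\va(g,\pa B_1^2)$. Alternatively, and more simply, I will apply the argument below directly on circles $\pa B_\rho^2$, with $\rho$ close to $1$, where the trace is near $\cN$.

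\textbf{Step 2: Lower bound on circles.} For $\rho\in(\va,1)$, on each circle $\pa B^2_\rho(y)$ where $\dist(u,\cN)<\delta_\cN$, the loop $\Pi_\cN\circ u$ has a class $\tau$. Its angular energy is at least $E_{\op{min}}(\tau)/\rho\geq|\tau|_*/\rho$, up to the factor from $D\Pi_\cN$, which is close to $1$ near $\cN$; I will handle this by restricting to $\dist<\delta$ small. Circles where $u$ leaves $\cN_\delta$ are controlled by the potential: an $H^1$ path on a circle that exits $\cN_\delta$ costs energy $\gtrsim \delta^2/\va$ per such circle via the one-dimensional inequality $\int(\f12|u'|^2+f(u)/\va^2)\geq c\,\dist$-variation$/\va$.

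\textbf{Step 3: Ball construction.} I will run the Jerrard--Sandier ball-growing procedure. First, cover the ``bad set'' $\{\dist(u,\cN)\geq\delta\}$ by disjoint balls of total radius $\leq C\va E_\va(u,B_1^2)$. If $E_\va(u,B_1^2)\geq|\sg|_*|\log\va|$ there is nothing to prove; otherwise the total radius is $\leq C\va|\log\va|$. Then grow and merge the balls. Each ball $B$ carries a class $\sg_B$, defined by the loop on $\pa B$, and merging uses the additivity of Definition \ref{Definitionplus}: the outer class lies in the sum of the inner classes, by Lemma \ref{propextension} / \eqref{conjugacy}. Norm subadditivity (c) then ensures that the sum of $|\sg_B|_*$ over the balls is at least $|\sg|_*$ at the final stage. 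The annular lower bound on each circle gives energy $\geq\sum|\sg_B|_*\log(\text{radius ratio})$. Integrating from total radius $C\va|\log\va|$ up to $\sim 1$ yields $|\sg|_*\log\f{1}{\va}-C|\sg|_*\log|\log\va|$. This loss is too large, so I will refine the argument as in \cite[Section 3]{MRS21}: start the growth with initial radius $C\va$, using the lower energy density on the bad set to absorb the $\log\log$ loss. Equivalently, I will cite the proposition in \cite{MRS21} giving the lower bound with singular energy, which is equivalent to $|\cdot|_*$ by Remark \ref{remequvalence}.

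\textbf{Main obstacle.} The main obstacle is obtaining an $O(1)$ error rather than $O(\log|\log\va|)$ in the ball construction, together with the correct handling of the non-abelian multivalued sum when merging balls. For the latter, property (c) of $|\cdot|_*$ suffices. For the former, my first attempt will be to invoke \cite{MRS21} directly, reducing to its hypotheses via Step 1's collar modification.
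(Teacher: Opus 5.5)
Your argument ends up where the paper's does: rescale to $r=1$, so that $rE_\va(g,\pa B_r^2)$ becomes the boundary energy on $\pa B_1^2$, and invoke \cite[Corollary 6.8]{MRS21} together with Remark~\ref{remequvalence}, which identifies the singular energy with $|\cdot|_*$. The paper applies that corollary directly to boundary data that are only $\delta_\cN$-close to $\cN$, so you need neither the collar reduction nor the ball-construction sketch. The collar, as written, sits inside $B_1^2$, where its inner trace $\Pi_\cN\circ g$ does not match $u$; it would have to be attached outside and then rescaled. The ball construction, as you note yourself, loses $\log|\log\va|$.
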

\begin{proof}
Using a scaling argument, we may assume that $ r=1 $. The conclusion then follows from Remark \ref{remequvalence} and \cite[Corollary 6.8]{MRS21}.
\end{proof}

\subsection{Upper bound on the energy}
In this subsection, we construct suitable maps that yield an upper bound for the energy in terms of the topological class of the boundary data. We begin with the two-dimensional case. The following lemma extends \cite[Lemma 28]{Can17}, which treats the Landau--de Gennes model, and can also be viewed as another form of \cite[Proposition 5.1]{MRS21}.

\begin{lem}\label{upperboundleast}
Let $ r>0 $. There exists a constant $ C>0 $, depending only on $ f $ and $ \cN $, such that the following property holds. For any $ \va\in(0,r) $ and any $ g\in H^1(\pa B_r^2,\cN) $, there exists $ u_{\va}\in H^1(B_r^2,\R^m) $ such that $ u_{\va}|_{\pa B_r^2}=g $ and
\be
E_{\va}(u_{\va},B_r^2)\leq|\sg|_*\log\f{r}{\va}+C\big(r\|\na_{\pa B_r^2}g\|_{L^2(\pa B_r^2)}^2+1\big),\label{upperboundleast1}
\ee
where $ \sg:=[g]_{\cN} $.
\end{lem}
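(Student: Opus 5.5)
By scaling $x\mapsto x/r$, which sends $\va$ to $\va/r$ and leaves $r\|\na_{\pa B_r^2}g\|_{L^2}^2$ invariant, we may assume $r=1$ and $\va\in(0,1)$. We use the definition of $|\sg|_*$: choose classes $\sg_1,\dots,\sg_k$, all nontrivial, with $\sg\in\sum_{i=1}^k\sg_i$ and $|\sg|_*=\sum_i E_{\op{min}}(\sg_i)$. The infimum in \eqref{normsg} is attained because $\pi_1(\cN)$ is finite. By \eqref{c0geq} we have $k\le |\sg|_*/c_0$. Moreover, $|\sg|_*$ is bounded by a constant depending only on $\cN$, since there are finitely many classes. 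So $k$, and the finitely many possible configurations $(\sg_1,\dots,\sg_k)$, are bounded in terms of $\cN$ alone.

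The map is built in three layers. For each $\sg_i$ fix a minimizing geodesic $\gamma_i\in H^1(\Ss^1,\cN)$ with $\frac12\int|\gamma_i'|^2=E_{\op{min}}(\sg_i)$. Fix $k$ disjoint disks $B^2_{\rho}(a_i)$, with centers and radius depending only on $k$, contained in $B_{1/2}^2$.

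\textbf{Vortex layer.} On the annuli $B^2_\rho(a_i)\setminus B^2_\va(a_i)$ set
\[
u_\va(x)=\gamma_i\Big(\frac{x-a_i}{|x-a_i|}\Big).
\]
This map takes values in $\cN$, so the potential term vanishes. Its Dirichlet energy is $E_{\op{min}}(\sg_i)\log(\rho/\va)$. Summing over $i$ gives $|\sg|_*\log\frac1\va+C$.

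\textbf{Core layer.} In each $B^2_\va(a_i)$, interpolate linearly from the value $\gamma_i(\theta)$ on $\pa B^2_\va(a_i)$ to a constant. Since $\|u\|_\infty$ stays bounded, the potential term is bounded by $C/\va^2$ times the area $\pi\va^2$. The gradient is at most $C/\va$, so its contribution is $O(1)$ as well.

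\textbf{Connecting layer.} On the region $U=B^2_{1/2}\setminus\bigcup_i\ol B^2_\rho(a_i)$, Lemma~\ref{propextension} gives a $C^1$ map into $\cN$. It has class $\sg_i$ on the inner circles and a class $\sg_0\in\sum\sg_i$ on the outer circle, where we choose $\sg_0=\sg$; this is allowed since $\sg\in\sum\sg_i$. Its gradient is bounded by a constant depending only on $\cN$. We then reparametrize each inner boundary loop to agree exactly with $\gamma_i$. Two freely homotopic $C^1$ (or $H^1$) loops can be joined on a thin annulus by a homotopy. We may choose it with Lipschitz bound depending on the two loops, and since there are finitely many fixed loops per class, this bound is uniform.

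It remains to match $g$ on the outer annulus $B_1^2\setminus B_{1/2}^2$. The data on $\pa B^2_{1/2}$ is now a fixed smooth loop $\lambda$ in class $\sg$, chosen from a finite list depending only on $\cN$. The task is to build an $\cN$-valued map on the annulus equal to $g$ on $\pa B_1^2$ and to $\lambda$ on $\pa B_{1/2}^2$, with Dirichlet energy at most $C(\|\na g\|_{L^2}^2+1)$. Since $\lambda$ and $g$ are freely homotopic, the plan is as follows. Lift to the universal cover after composing with a fixed loop, or equivalently use Lemma~\ref{ExtensionLemma1} on a slit disk. Concretely, we concatenate: first a radial extension of $g$, homogeneous of degree zero, on $B_1\setminus B_{3/4}$, with energy at most $C\|\na g\|^2_{L^2}$. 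Then, on $B_{3/4}\setminus B_{1/2}$, we need a homotopy between $g$ and $\lambda$ in $\cN$ with controlled energy.

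The main obstacle is exactly this last homotopy. Its cost must be linear in $\|\na g\|_{L^2}^2$ and must not depend on $g$ otherwise. I would handle it by cutting the annulus along a radial segment, which turns it into a topological disk. The boundary loop of that disk is $g$, followed by a segment, followed by $\lambda^{-1}$, followed by the segment back. This loop is null-homotopic once the segment is suitably chosen: since $\pi_1$ is finite, choose a path from $g(1)$ to $\lambda(1)$ of bounded length realizing the conjugation. Lemma~\ref{ExtensionLemma1}, after a bi-Lipschitz map from the slit annulus to a disk, then gives energy at most $C(\|\na g\|^2_{L^2}+\|\lambda'\|^2+\text{length}^2)\le C(\|\na g\|^2_{L^2}+1)$. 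Because $\cN$ is compact, the connecting path can be chosen among geodesics of length at most $C(\cN)$ times the bounded word length.

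Summing the layers yields \eqref{upperboundleast1}.
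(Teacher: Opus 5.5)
Your proposal is correct and takes essentially the same route as the paper: choose $\sg_1,\dots,\sg_k$ attaining $|\sg|_*$, place a vortex of the type in Lemma~\ref{Eminlemma} (radial geodesic plus a linear core) in each small disk, connect the disks with the map from Lemma~\ref{propextension} with outer class $\sg$, and treat the outer annulus by cutting it into a disk whose boundary loop is null-homotopic, then applying Lemma~\ref{ExtensionLemma1}; this is the paper's square with edges $g$, $G|_{\pa D}$ and the connecting paths $c_1,c_2$. Your thin-annulus homotopy matching $G|_{\pa B_\rho(a_i)}$ to the geodesic $\gamma_i$, and your uniform length bound on the connecting path, make explicit two points the paper leaves implicit (as written there, $w_{\va}^i$ and $G$ need not agree on $\pa B_\rho(x_i)$), and the only remaining bookkeeping is the trivial case $\va\geq\rho/2$, where a single core-type interpolation on $B_\rho(a_i)$ costs $O(1)$.
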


To prove the above lemma, we first establish the following auxiliary result.

\begin{lem}\label{Eminlemma}
Let $ \va\in(0,1) $, and $ \sg\in[\Ss^1,\cN] $. Assume that $ \ell\in H^1(\Ss^1,\cN) $ is a geodesic loop such that
\[
E_{\op{min}}(\sg)=\int_{\Ss^1}|\ell'|^2,\quad [\ell]_{\cN}=\sg.
\]
Then there exists $ w_{\va}\in H^1(B_1^2,\R^m) $ such that $ w_{\va}|_{\pa B_1^2}=\ell $ and
\[
E_{\va}(w_{\va},B_1^2)\leq E_{\op{min}}(\sg)|\log\va|+C,
\]
where $ C>0 $ depends only on $ f $ and $ \cN $.
\end{lem}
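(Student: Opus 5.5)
The plan is to build $w_{\va}$ by the standard ``radial cone plus core'' construction: take the $0$-homogeneous extension of $\ell$ on the annulus $B_1^2\backslash B_{\va}^2$, which produces the logarithmic term, and fill the small disk $B_{\va}^2$ with an arbitrary bounded-gradient interpolation, which costs only $O(1)$. As a normalization remark, the statement writes $E_{\op{min}}(\sg)=\int_{\Ss^1}|\ell'|^2$ while $E_{\op{min}}$ was defined with a factor $\f12$; I use the definition with $\f12$. The unnormalized reading only changes constants.

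\textbf{Step 1: uniform Lipschitz bound on $\ell$.} A minimizing geodesic loop has constant speed. Hence
\[
|\ell'|^2\equiv\f{2E_{\op{min}}(\sg)}{2\pi}\quad\text{on }\Ss^1.
\]
By \ref{A2} and the bijection \eqref{bijection}, $[\Ss^1,\cN]$ is finite. Therefore $\max_{\sg}E_{\op{min}}(\sg)$ is a constant depending only on $\cN$, and $\|\ell'\|_{L^\infty}\le C(\cN)$. This is the step that makes $C$ independent of $\sg$.

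\textbf{Step 2: the annulus.} In polar coordinates $(\rho,\theta)$, set $w_{\va}(\rho,\theta):=\ell(\theta)$ for $\va\le\rho\le1$. Since $\ell$ takes values in $\cN$, we have $f(w_{\va})=0$ there. Moreover $|\na w_{\va}|^2=\rho^{-2}|\ell'(\theta)|^2$, so
\[
E_{\va}(w_{\va},B_1^2\backslash B_{\va}^2)=\f12\int_{\va}^1\f{\ud\rho}{\rho}\int_{\Ss^1}|\ell'|^2\ud\theta=E_{\op{min}}(\sg)|\log\va|.
\]
The boundary condition $w_{\va}|_{\pa B_1^2}=\ell$ holds by construction.

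\textbf{Step 3: the core.} Fix $p:=\ell(0)\in\cN$. For $\rho<\va$, set
\[
w_{\va}(\rho,\theta):=\f{\rho}{\va}\ell(\theta)+\Big(1-\f{\rho}{\va}\Big)p .
\]
This agrees with the annulus definition on $\pa B_{\va}^2$, so $w_{\va}\in H^1(B_1^2,\R^m)$. Its values lie in the convex hull of $\cN$, a compact set depending only on $\cN$, so $f(w_{\va})\le C(f,\cN)$. For the gradient, $|\pa_\rho w_{\va}|\le\va^{-1}\op{diam}\cN$ and $\rho^{-1}|\pa_\theta w_{\va}|=\va^{-1}|\ell'|\le C\va^{-1}$ by Step 1. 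Hence $e_{\va}(w_{\va})\le C\va^{-2}$ on $B_{\va}^2$, and
\[
E_{\va}(w_{\va},B_{\va}^2)\le C\pi\va^2\va^{-2}=C.
\]

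Adding Steps 2 and 3 gives the claim. The only point needing care is the uniformity of $C$ in $\sg$, and Step 1 handles it through the finiteness of $[\Ss^1,\cN]$; beyond that there is no real obstacle.
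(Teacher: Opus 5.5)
Your proposal is correct and is essentially the paper's construction: the $0$-homogeneous extension of $\ell$ on $B_1^2\backslash B_{\va}^2$ gives exactly $E_{\op{min}}(\sg)|\log\va|$, and a radial linear interpolation on $B_{\va}^2$ costs only $O(1)$. The differences are cosmetic. The paper interpolates to $0\in\R^m$ by setting $w_{\va}=\eta_{\va}(\rho)\ell(\theta)$, whereas you interpolate to a point $p\in\cN$; and the paper asserts $|\na w_{\va}|\le C\va^{-1}$ on $B_{\va}^2$ without comment, whereas you justify the needed uniform bound on $\ell'$ via constant speed of minimizing geodesics and the finiteness of $[\Ss^1,\cN]$.
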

\begin{proof}
We use polar coordinates $ (\rho,\theta)\in[0,1]\times\Ss^1 $ in $ B_1^2 $. Define
\[
\eta_{\va}:=\left\{\begin{aligned}
&1&\text{ if }&\rho\in[\va,1],\\
&\va^{-1}\rho&\text{ if }&\rho\in[0,\va).
\end{aligned}\right.
\]
Set $ w_{\va}(\rho,\theta)=\eta_{\va}(\rho)\ell(\theta) $. Then
\begin{align*}
&|\na w_{\va}|\leq C\va^{-1}\text{ in }B_{\va}^2,\quad|\na w_{\va}|=\rho^{-2}|\pa_{\theta}\ell|^2\text{ in }B_1\backslash B_{\va}^2,\quad\text{and}\quad 0\leq f(w_{\va})\leq C\chi_{B_{\va}^2},
\end{align*}
where $ \chi_{B_{\va}^2} $ is the characteristic function in $ B_{\va}^2 $. A direct computation gives
\begin{align*}
E_{\va}(w_{\va},B_1^2)&\leq\int_{B_{\va}^2}\f{C}{\va^2}\ud x+\int_{\va}^1\(\int_{\Ss^1}\rho^{-1}|\pa_{\theta}\ell|^2\)\ud\HH^1(\theta)\ud\rho\\
&\leq E_{\op{min}}(\sg)|\log\va|+C.
\end{align*}
This completes the proof.
\end{proof}

As above, we denote by~$\overline{xy}$ the straight line segments between two points~$x$, $y$. For a loop $ \ell\in C^0(\Ss^1,\cN) $ passing through $ x_0 \in\cN$, we denote by $ [\ell]_{\cN,x_0} $ its (based) homotopy class in $ \pi_1(\cN,x_0) $.
We denote by~$*$ the composition of paths: Given two paths~$\ell_1\colon [0, 1]\to\cN$, $\ell_2\colon[0, 1]\to\cN$ with~$\ell_1(1) = \ell_2(0)$, the composition~$\ell_1*\ell_2$ is the path obtained by first following $\ell_1$ and then $\ell_2$, that is,
\[
 (\ell_1*\ell_2)(t) :=
 \begin{cases}
  \ell_1(2t) &\textrm{if } 0 \leq t < \frac{1}{2} \\
  \ell_2(2t - 1) &\textrm{if } \frac{1}{2} \leq t \leq 1.
 \end{cases}
\]
Given~$\ell\colon [0,1]\to\cN$, we denote by~$\widetilde{\ell}$ the path with the opposite orientation, namely~$\widetilde{\ell}(t) := \ell(1 - t)$ for~$t\in [0, 1]$.

\begin{proof}[Proof of Lemma \ref{upperboundleast}]
By scaling, we may assume that $ r=1 $. It follows from \eqref{c0geq} and the finiteness of $ \pi_1(\cN) $ that there exist elements $ \{\sg_i\}_{i=1}^N $ such that $ \sg\in\sum_{i=1}^N\sg_i $ and
\be
|\sg|_*=\sum_{i=1}^NE_{\op{min}}(\sg_i),\quad N\leq C(\cN).\label{sgCcN}
\ee
Let $ D:=B_{\f{1}{2}}^2 $. Using \eqref{sgCcN}, we choose $ \rho=\rho(\cN)\in(0,\f{1}{10}) $ and points $ \{x_i\}_{i=1}^N\subset D $ such that
\begin{align*}
\dist(B_{\rho}(x_i),B_{\rho}(x_j))&\geq\rho,\quad\text{ for any }i,j\in\Z\cap[1,N],\,\,i\neq j,\\
\dist(B_{\rho}(x_i),\pa D)&\geq\rho,\quad\text{ for any }i\in\Z\cap[1,N].
\end{align*}
Set $ U:=D\backslash\cup_{i=1}^N\ol{B}_{\rho}^2(x_i) $. It follows from Lemma \ref{propextension} that there exists $ G\in C^1(\ol{U},\cN) $ such that the following properties hold.
\begin{itemize}
\item $ G $ satisfies the inequality
\be
\|\na G\|_{L^{\ift}(U)}\leq C(f,\cN).\label{nablaGfcN}
\ee
\item $ [G|_{\pa D}]_{\cN}=\sg_0 $ and $ [G|_{\pa B_{\rho}^2(x_i)}]_{\cN}=\sg_i $ for each $ i\in\Z\cap[0,N] $.
\end{itemize}
We will define the map~$u_\va$ to be equal to~$G$ on~$U$, and extend it inside each hole~$B_\rho(x_i)$ by making use of Lemma~\ref{Eminlemma}. However, we need to define~$u_\va$ in the annulus~$B^2\backslash D$ in such a way that it matches the boundary condition~$g$ on~$\partial B^2$. To this end, we consider the unit square $ [0,1]^2\subset\R^2 $. For simplicity, let
\[
a_1:=(0,0), \quad a_2:=(1,0), \quad a_3:=(1,1), \quad a_4:=(0,1).
\]
We first construct a suitable boundary datum~$\phi\colon\partial[0,1]^2\to\cN$
that is homotopic to a constant, extend it to a map~$[0, 1]^2\to\cN$, and finally obtain the desired extension~$B^2\backslash D\to\cN$ by identifying the edges~$\overline{a_2a_3}$ and~$\overline{a_4a_1}$ of the square.

The boundary datum~$\phi\colon \partial[0,1]^2\to\cN$ is defined as follows. On the edge~$\overline{a_1a_2}$, we define~$\phi$ as a reparameterization of~$g$, that is, $\phi(x_1, 0) := g(\exp(2\pi i x_0))$ for~$x_1\in [0, 1]$. Note that~$g$ is a loop in~$\cN$ based at the point~$x_0 := \phi(a_1) = \phi(a_2)$, and that~$[g]_{\cN,x_0}\in\pi_1(\cN,x_0)$. On the edge~$\overline{a_3a_4}$, we define~$\phi$ as a reparameterization of~$g_0 := G|_{\pa D}$, that is $\phi(x_1, 1) = g_0(\exp(2\pi i x_1))$ for~$x_1\in [0, 1]$.
The map~$g_0$ is a loop based at the point~$y_0 := \phi(a_3) = \phi(a_4)$. Choose a continuous curve $ c_1:[0,1]\to\cN $ such that $ c_1(0)=x_0 $ and $ c_1(1)=y_0 $, and let 
\[
h_0:=c_1 * g_0 * \wt{c}_1.
\]
The map~$h_0$ is a loop based at~$x_0$. Since $ [g_0]_{\cN}=[g]_{\cN}=\sg $, the elements $ [h_0]_{\cN,x_0} $ and $ [g]_{\cN,x_0} $ lie in the same conjugacy class in $ \pi_1(\cN,x_0) $. Therefore, there exists a loop $ c_2\in C^1(\Ss^1,\cN) $ based at $ x_0 $ such that $ x_0\in c_2 $ and
\be
[c_2]_{\cN,x_0}[h_0]_{\cN,x_0}[c_2]_{\cN,x_0}^{-1}=[g]_{\cN,x_0}.
\label{h0property}
\ee
On the segments $ \ol{a_4a_1} $ and $ \ol{a_2a_3} $, we define~$\phi(0, x_2) := (\wt{c}_1*\wt{c}_2)(x_2)$ and~$\phi(1, x_2) := (c_2*c_1)(x_2)$, for~$x_2\in [0, 1]$, respectively. By \eqref{h0property}, the resulting boundary datum~$\phi\colon\pa[0,1]^2\to\cN$ is homotopically trivial. Hence, by Lemma \ref{ExtensionLemma1}, there exists $ v\in H^1([0,1]^2,\cN) $ such that $v|_{\partial[0,1]^2}=\phi$. Moreover, \eqref{ExtensionLemma1eq} yields
\be
\|\na v\|_{L^2([0,1]^2)}\leq C(\|\na_{\pa B_1^2}g\|_{L^2(\pa B_1^2)}+1).\label{nablavestimate}
\ee
For each $ i\in\Z\cap[1,N] $, we apply Lemma \ref{Eminlemma} in $ B_{\rho}^2(x_i) $ to construct $ w_{\va}^i\in H^1(B_{\rho}^2(x_i),\R^m) $ such that
\be
E_{\va}(w_{\va}^i,B_{\rho}^2(x_i))\leq E_{\op{min}}(\sg_i)|\log\va|+C.
\label{EwvaiestimateEmin}
\ee
Let $ \Phi:B_1^2\backslash D\to[0,1]^2 $ be a Lipschitz map identifying the edges $ \ol{a_2a_3} $ and $ \ol{a_1a_4} $. Define $ u_{\va}:B_1^2\to\R^m $ by
\[
u_{\va}(x)=\left\{\begin{aligned}
&w_{\va}^i(x)&\text{ if }&x\in B_{\rho}^2(x_i),\\
&G(x)&\text{ if }&x\in U,\\
&(v\circ\Phi)(x)&\text{ if }&x\in B_1^2\backslash D.
\end{aligned}\right.
\]
Then the estimate \eqref{upperboundleast1} follows from \eqref{sgCcN}, \eqref{nablaGfcN}, \eqref{nablavestimate}, and \eqref{EwvaiestimateEmin}.
\end{proof}

For $ r,L>0 $, we define
\be
\Lda_{r,L}:=B_r^2\times B_L^{n-2},\quad\Ga_{r,L}:=\pa B_r^2\times B_L^{n-2}.\label{LdaGarLdef}
\ee
Let $ u\in H^1(\Ga_{r,L},\cN) $. By Lemma \ref{H1homotopyclass}, there exists $ \sg\in[\Ss^1,\cN] $ such that for $ \HH^{n-2} $-a.e. $ x\in B_L^{n-2} $, we have $ u(\cdot,x)\in C^0(\Ss^1,\cN) $ and $ [u(\cdot,x)]_{\cN}=\sg $. We denote this homotopy class by $ [u]_{\cN} $. The following result extends Lemma \ref{Eminlemma} to the cylindrical domain $ \Lda_{r,L} $.

\begin{lem}\label{cylinderextension1}
There exists a constant $ C>0 $ depending only on $ f,\cN $, and $ n $, such that the following properties hold. For any $ \va\in(0,r) $ and any $ g\in H^1(\Ga_{r,L},\cN) $ with $ \sg:=[g]_{\cN} $, there exists $ u_{\va}\in H^1(\Lda_{r,L},\R^m) $ such that $ u_{\va}=g $ on $ \Ga_{r,L} $,
\be
E_{\va}(u_{\va},\Lda_{r,L})\leq CL\(\f{L}{r}+\f{r}{L}\)\|\na_{\Ga_{r,L}}g\|_{L^2(\Ga_{r,L})}^2+|\sg|_*\HH^{n-2}(B_L^{n-2})\log\f{r}{\va}+CL^{n-2},
\label{LdarLestimate}
\ee
and
\be
E_{\va}(u_{\va},B_r^2\times\pa B_L^{n-2})\leq C\(\f{L}{r}+\f{r}{L}\)\|\na_{\Ga_{r,L}}g\|_{L^2(\Ga_{r,L})}^2+|\sg|_*\HH^{n-3}(\pa B_L^{n-2})\log\f{r}{\va}+CL^{n-3}.
\label{LdarLestimate1}
\ee
\end{lem}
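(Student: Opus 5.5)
We will reduce to a model geometry by scaling and then build $u_\va$ in two layers: an outer layer where $g$ is deformed, inside $\cN$, to a boundary datum that is a fixed optimal profile in the $\Ss^1$ variable, and an inner core where the planar construction of Lemma~\ref{upperboundleast} is applied on every slice.

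\textbf{Scaling.} Rescaling the variable in $B_r^2$ alone does not preserve the Dirichlet energy, so we work directly with the anisotropic geometry. Write points as $(y,z)\in B_r^2\times B_L^{n-2}$. The constant $C(\f{L}{r}+\f{r}{L})$ in \eqref{LdarLestimate} is exactly what one pays for a Lipschitz change of variables that mixes the two directions. This suggests passing to a cylinder of comparable side lengths, or subdividing $B_L^{n-2}$ into cubes of side $\sim r$; we keep track of the aspect ratio $L/r$ throughout.

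\textbf{Outer layer.} Split $\Lda_{r,L}$ into the annular region $(B_r^2\backslash B_{r/2}^2)\times B_L^{n-2}$ and the core $\Lda_{r/2,L}$. Let $\ell_\sg$ be a fixed Lipschitz loop representing $\sg$, rescaled to $\pa B_{r/2}^2$; it can be taken to be the boundary datum of the explicit map built in the proof of Lemma~\ref{upperboundleast}. On the core we set
\[
u_\va(y,z):=U_\va(y),
\]
where $U_\va$ is the planar map of Lemma~\ref{upperboundleast} with $g=\ell_\sg$ on $B_{r/2}^2$. Its energy per slice is $|\sg|_*\log\f{r}{\va}+C$, since $\|\na\ell_\sg\|_{L^2}^2\lesssim r^{-1}$ and the scale-invariance absorbs this term. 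Integrating over $B_L^{n-2}$ gives the main term $|\sg|_*\HH^{n-2}(B_L^{n-2})\log\f{r}{\va}+CL^{n-2}$. The same map restricted to $B_r^2\times\pa B_L^{n-2}$ gives the corresponding term in \eqref{LdarLestimate1}.

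\textbf{Interpolation (the main obstacle).} In the annular region we must connect $g(\cdot,z)$ on $\pa B_r^2$ to $\ell_\sg$ on $\pa B_{r/2}^2$ by an $\cN$-valued map. The energy must be bounded by $CL(\f{L}{r}+\f{r}{L})\|\na g\|^2+CL^{n-2}$ in the bulk and by $C(\f{L}{r}+\f{r}{L})\|\na g\|^2+CL^{n-3}$ on the lateral boundary. We treat the region as $\Ss^1\times[r/2,r]\times B_L^{n-2}$, which is topologically $\Ss^1$ times an $(n-1)$-ball. The datum on the boundary of $[r/2,r]\times B_L^{n-2}$-slices is $g$ on one face, $\ell_\sg$ on the other face, and free on the sides.

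Our first attempt is to lift. By Lemma~\ref{H1homotopyclass} all slices have class $\sg$. We then use the construction of the proof of Lemma~\ref{upperboundleast}, i.e.\ the square-with-identified-edges trick, done uniformly in $z$: cut the annulus along a ray $\{\theta=0\}$. Because $\pi_1(\cN)$ is finite, the cut map on $[0,1]\times[r/2,r]\times B_L^{n-2}$ has boundary data compatible with a lift to $\wt\cN$. The connecting paths $c_1,c_2$ are chosen depending measurably (indeed Lipschitz, by choosing $c_1$ as a geodesic to a fixed point) on $z$. We then apply the $n$-dimensional extension Lemma~\ref{ExtensionLemma2}, via the lift as in its proof, on the simply connected domain. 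The square-root bound of Lemma~\ref{ExtensionLemma2} is harmless here, since $ab\le a^2+b^2$ produces $\|\na g\|^2+CL^{n-2}$.

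Alternatively, to get the exact linear dependence on $\|\na g\|^2$, we use a homogeneous (radial) extension of the lifted datum from a point in each cell of a decomposition of $B_L^{n-2}$ into cubes of size $\min(r,L)$. The aspect-ratio factor arises from the number and shape of these cells. Finally, \eqref{LdarLestimate1} is obtained by selecting, via Fubini or averaging in $L$ over $[L,2L]$ and rescaling, a lateral slice where the boundary energy is controlled by the bulk energy divided by $L$. Alternatively, we perform the construction slicewise in $z$ so that the lateral trace is itself the analogous $(n-1)$-dimensional construction.
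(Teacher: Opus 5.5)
Your core construction is fine. A $z$-independent planar map from Lemma~\ref{upperboundleast} on $B_{r/2}^2$ gives exactly the terms $|\sg|_*\HH^{n-2}(B_L^{n-2})\log\f{r}{\va}+CL^{n-2}$ and $|\sg|_*\HH^{n-3}(\pa B_L^{n-2})\log\f{r}{\va}+CL^{n-3}$. The gap is the interpolation on $(B_r^2\backslash B_{r/2}^2)\times B_L^{n-2}$, which is the real content of the lemma; your plan does not produce an $H^1$ map there.

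\textbf{The cut face.} For each $z$ you need a path from $g(\theta_0,z)$ to $\ell_\sg(\theta_0)$ in the particular relative homotopy class that makes the slice loop contractible. The resulting map on $\{\theta_0\}\times[r/2,r]\times B_L^{n-2}$ must also be $H^1$ jointly in $(\rho,z)$. But $z\mapsto g(\theta_0,z)$ is only an $H^1$ function on a ball of dimension $n-2\ge 2$, so it need not be continuous. The rule ``geodesic to a fixed point'' is discontinuous across the cut locus and ignores the homotopy constraint. In fact, no continuous choice of paths from every point of the closed manifold $\cN$ to a fixed point exists, since it would contract $\cN$. So the claimed Lipschitz dependence is false, and mere measurability in $z$ gives no energy bound.

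\textbf{The lateral faces.} Lemma~\ref{ExtensionLemma2} needs data on the whole boundary of a ball, but the faces $(B_r^2\backslash B_{r/2}^2)\times\pa B_L^{n-2}$ carry none. Constructing data there is the original problem one dimension lower, and its energy is exactly what \eqref{LdarLestimate1} asks you to control. Neither fallback closes this. Averaging the lateral radius over $[L,2L]$ is impossible, because $g$ is given only on $\Ga_{r,L}$ and the estimate concerns the fixed surface $B_r^2\times\pa B_L^{n-2}$. A cell-by-cell homogeneous extension needs data on every interior face, i.e.\ a skeleton induction you have not set up, and its lowest-dimensional step hits the same path-selection problem.

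\textbf{The missing idea.} Connect to a slice of $g$ itself rather than to an external loop $\ell_\sg$; this removes all topology from the step.
\begin{enumerate}
\item Scale to $r=1$ (isotropic scaling is harmless and turns $L$ into $L/r$).
\item By Fubini, pick a slice $z_0$ with $\int_{\pa B_1^2}|\pa_\theta g(\cdot,z_0)|^2\le CL^{2-n}\|\na_{\Ga_{1,L}}g\|_{L^2}^2$, together with Fubini-good coordinate slices through $z_0$.
\item Take the core to be the $z$-independent map of Lemma~\ref{upperboundleast} with datum $g(\cdot,z_0)$.
\item In the annulus, set $u(\rho,\theta,z):=g(\theta,\Psi(\rho,z))$. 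Here $\Psi(\rho,\cdot)$ collapses a cube $(-L,L)^{n-2}$, bi-Lipschitz to $B_L^{n-2}$, onto $z_0$ one coordinate at a time as $\rho$ decreases from $1$ to $\f12$.
\end{enumerate}
This $u$ is automatically $\cN$-valued and $H^1$, matches $g$ at $\rho=1$ and $g(\cdot,z_0)$ at $\rho=\f12$, and needs no lifting or connecting paths. Change of variables bounds its bulk energy by $C(L^2+1)\|\na g\|^2$: $|\pa_\rho\Psi|\lesssim L$ produces the factor $L^2$, and the good coordinate slices control the already-collapsed variables. The same computation on each face of the cube bounds the lateral trace by $C(L+L^{-1})\|\na g\|^2$. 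After scaling back, these are the gradient terms in \eqref{LdarLestimate} and \eqref{LdarLestimate1}.
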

\begin{proof}
By scaling, we may assume $ r=1 $. Let $ Q=(-L,L)^{n-2} $, and let $ \Phi:\ol{B}_L^{n-2}\to\ol{Q} $ be a  bi-Lipschitz map (that is, an invertible Lipschitz map with a Lipschitz inverse) such that $ \Phi(0^{n-2})=0^{n-2} $ and
\be
\|\na\Phi\|_{L^{\infty}(\ol{B}_L^{n-2})}+\|\na(\Phi^{-1})\|_{L^{\infty}(\ol{Q})}\leq C(n).
\label{nablaPhiestimate}
\ee
Let $ U\subset\R^{n-2} $ be a domain. We use polar coordinates $ (\rho,\theta,y')\in[0,1]\times\Ss^1\times U $ for the points in $ B_1^2\times U $.

By Lemma \ref{H1homotopyclass} and Fubini's theorem, we may choose
$y_0'\in B_{\f{L}{4}}^{n-2}$ such that $ g(1,\cdot,y_0')\in H^1(\Ss^1,\cN) $, $ [g(1,\cdot,y_0')]_{\cN}=\sg $, and
\be
\int_{\pa B_1^2}|\pa_{\theta}g(1,\cdot,y_0')|^2\ud\HH^1\leq\f{C}{L^{n-2}}\|\na_{\Ga_{1,L}}g\|_{L^2(\Ga_{1,L})}^2.
\label{gy0estimate}
\ee
After translating the $y'$-coordinates, we assume that $y_0'=0^{n-2}$.
We keep the same notation for the translated variables. In these coordinates, we define $\wt{g}:\pa B_1^2\times Q\to\cN$ by
\[
\wt{g}(1,\theta,y'):=g(1,\theta,\Phi^{-1}(y')),
\qquad (\theta,y')\in \pa B_1^2\times Q.
\]

We also choose the translated coordinates so that the coordinate slices through the origin are good. More precisely, for $j=0$, set $Q_0:=Q$, and for
$1\leq j\leq n-2$, set
\[
Q_j:=\{y'\in Q:\ y_1=...=y_j=0\}.
\]
By applying Fubini's theorem successively in the coordinate variables, we may also assume that, for every $j\in\Z\cap[1,n-2]$,
\be
L^j\int_{\pa B_1^2\times Q_j}(|\pa_\theta\wt{g}|^2+|\na_{y'}\wt{g}|^2)
\ud\HH^{n-1-j}\leq C(n)\int_{\pa B_1^2\times Q}
(|\pa_\theta\wt{g}|^2+|\na_{y'}\wt{g}|^2)\ud\HH^1
\ud y'.\label{Fubinimore}
\ee

Divide the radial interval $ [\f{1}{2},1] $ into $ n-2 $ equal sub-intervals and define
\[
r_j:=1-\f{j}{2n-4},\quad\text{for }j\in\Z\cap[0,n-2].
\]
For $ \rho\in[\f{1}{2},1] $ and $ y'=(y_1,\dots,y_{n-2})\in Q $, define
\[
\Psi(\rho,y'):=(\Psi_1(\rho,y_1),...,\Psi_{n-2}(\rho,y_{n-2}))
\]
with
\[
\Psi_j(\rho,y_j)=\left\{\begin{aligned}
&y_j&\text{ if }&\rho\geq r_{j-1},\\
&\op{sgn}(y_j)\max(0,|y_j|-(2n-4)L(r_{j-1}-\rho))&\text{ if }&\rho\in[r_j,r_{j-1}],\\
&0&\text{ if }&\rho\in[0,r_j].
\end{aligned}\right.
\]
Thus, on the interval $[r_j,r_{j-1}]$, only the $j$-th coordinate is moved, while the previous coordinates have already been collapsed to $0$. 

We define the extension $\wt u$ of $\wt g$ in $ (B_1^2\backslash B_{\f{1}{2}}^2)\times Q $ by
\[
\wt{u}(\rho,\theta,y')=\wt{g}(1,\theta,\Psi(\rho,y')).
\]
Since $\wt g$ is $\cN$-valued, the potential term vanishes for $\wt u$.
Using the chain rule, the definition of $\Psi$, together with \eqref{nablaPhiestimate}--\eqref{Fubinimore}, we obtain
\be
\begin{aligned}
E_{\va}(\wt{u},(B_1^2\backslash B_{\f{1}{2}}^2)\times Q)&=\f{1}{2}\int_{\f{1}{2}}^1\rho\ud\rho\int_{\Ss^1}\ud\HH^1(\theta)\int_{Q}(\rho^{-2}|\pa_{\theta}\wt{u}|^2+|\na_{y'}\wt{u}|^2+|\pa_{\rho}\wt{u}|^2)\ud y'\\
&\leq C\int_{\Ss^1}\(\int_{B_L^{n-2}}(|\pa_{\theta}g|^2+(L^2+1)|\na_{y'}g|^2)\ud y'\)\ud\HH^1(\theta)\\
&\leq C\int_{\Ga_{1,L}}(|\na_{\Ga_{1,L}}g|^2+|\na_{\Ga_{1,L}}g|^2+L^2|\na_{\Ga_{1,L}}g|^2)\ud\HH^{n-1}\\
&\leq CL(L+L^{-1})\|\na_{\Ga_{1,L}}g\|_{L^2(\Ga_{1,L})}^2.
\end{aligned}\label{EvawtPestimate1}
\ee
Indeed, on $[r_j,r_{j-1}]$, the image of $\Psi$ is contained in $Q_{j-1}$,
and only the $j$-th coordinate varies. The factors produced by the already
collapsed variables are controlled by \eqref{Fubinimore}, while
$|\pa_\rho\Psi_j|\leq CL$ gives the additional factor $L^2$ for the second inequality of \eqref{EvawtPestimate1}. From the construction of $ \wt{u} $, we have
\be
\wt{u}\(\f{1}{2},\theta,y'\)=\wt{g}(1,\theta,0^{n-2}),\quad(\theta,y')\in \Ss^1\times Q.\label{boundarywtu}
\ee
Given \eqref{gy0estimate}, we apply Lemma \ref{upperboundleast} to obtain a map $ v_{\va}\in H^1(B_{\f{1}{2}}^2,\R^m) $ such that
\[
v_{\va}\(\f{1}{2},\theta\)=g(1,\theta,0^{n-2}),\quad\theta\in\Ss^1,
\]
and
\be
\begin{aligned}
E_{\va}(v_{\va},B_{\f{1}{2}}^2)&\leq|\sg|_*|\log\va|+C(\|\pa_{\theta}g(1,\cdot,0^{n-2})\|_{L^2(\pa B_1^2)}^2+1)\\
&\stackrel{\eqref{gy0estimate}}{\leq}|\sg|_*|\log\va|+C(L^{2-n}\|\na_{\Ga_{1,L}}g\|_{L^2(\Ga_{1,L})}^2+1).
\end{aligned}\label{Evavvaestimate}
\ee
Define
\[
u_{\va}(\rho,\theta,y'):=\left\{\begin{aligned}
&v_{\va}(\rho,\theta)&\text{ if }&(\rho,\theta,y')\in B_{\f{1}{2}}^2\times B_L^{n-2},\\
&\wt{u}(\rho,\theta,\Phi(y'))&\text{ if }&(\rho,\theta,y')\in (B_1^2\backslash B_{\f{1}{2}}^2)\times B_L^{n-2}.
\end{aligned}\right.
\]
By \eqref{boundarywtu}, we have $ u_{\va}\in H^1(\Lda_{r,L},\R^m) $. Combining \eqref{nablaPhiestimate} and \eqref{EvawtPestimate1}, we obtain
\be
E_{\va}(u_{\va},(B_1^2\backslash B_{\f{1}{2}}^2)\times B_L^{n-2})\leq CL(L+L^{-1})\|\na_{\Ga_{1,L}}g\|_{L^2(\Ga_{1,L})}^2.\label{EvawtPestimate}
\ee
From \eqref{Evavvaestimate}, we deduce
\[
E_{\va}(u_{\va},B_{\f{1}{2}}^2\times B_L^{n-2})\leq |\sg|_*\HH^{n-2}(B_L^{n-2})|\log\va|+C\|\na_{\Ga_{1,L}}g\|_{L^2(\Ga_{1,L})}^2+CL^{n-2}
\]
Together with \eqref{EvawtPestimate}, this yields \eqref{LdarLestimate}.

Moreover, from the definition of $ u_{\va} $ on $ B_{\f{1}{2}}^2\times B_L^{n-2} $, we obtain
\be
E_{\va}(u_{\va},B_{\f{1}{2}}^2\times\pa B_L^{n-2})\leq |\sg|_*\HH^{n-3}(\pa B_L^{n-2})|\log\va|+C(L^{-1}\|\na_{\Ga_{1,L}}g\|_{L^2(\Ga_{1,L})}^2+L^{n-3}).\label{toplow0}
\ee
For any $ k\in\Z\cap[1,n-2] $, let
\[
F_k^+:=\{y'\in\pa Q:\ y_k=L\},
\quad
F_k^-:=\{y'\in\pa Q:\ y_k=-L\}.
\]
We claim that
\be
E_{\va}(\wt{u},(B_1^2\backslash B_{\f{1}{2}}^2)\times F_k^{\pm})\leq C(L+L^{-1})\|\na_{\Ga_{1,L}}g\|_{L^2(\Ga_{1,L})}^2.\label{Fkestimate}
\ee
Summing over $ k\in\Z\cap[1,n-2] $ and using \eqref{nablaPhiestimate}, we obtain
\[
E_{\va}(u_{\va},(B_1^2\backslash B_{\f{1}{2}}^2)\times\pa B_L^{n-2})\leq C(L+L^{-1})\|\na_{\Ga_{1,L}}g\|_{L^2(\Ga_{1,L})}^2.
\]
This, together with \eqref{toplow0}, implies \eqref{LdarLestimate1}.

It remains to prove \eqref{Fkestimate}. By symmetry, it suffices to prove the estimate for $F_1^+$. Set $y'':=(y_2,...,y_{n-2})$. On each interval
$[r_j,r_{j-1}]$, the same change-of-variables argument as above gives
\[
\int_{r_j}^{r_{j-1}}\(\int_{F_1^+}
H(\Psi(\rho,y'))\ud\HH^{n-3}(y')\)\ud\rho
\leq
CL^{-1}\int_Q H(y')\ud y'
\]
for every non-negative integrable function $H$ on $Q$. Applying this estimate with $H=|\pa_\theta\wt g|^2+|\na_{y'}\wt g|^2$, and using
$|\pa_\rho\Psi_j|\leq CL$, together with \eqref{Fubinimore}, we have
\begin{align*}
E_{\va}(\wt{u},(B_1^2\backslash B_{\f{1}{2}}^2)\times F_1)&\leq\sum_{j=1}^{n-2}\int_{F_1}\ud y''\int_{r_j}^{r_{j-1}}\ud\rho\int_{\Ss^1}\(\rho^{-1}|\pa_{\theta}\wt{u}|^2+\rho|\pa_{\rho}\wt{u}|^2+\rho|\na_{y''}\wt{u}|^2\)\ud\HH^1(\theta)\\
&\leq C\sum_{j=1}^{n-2}\int_{F_1}\ud y''\int_{0}^{L}\f{\ud\rho}{L}\int_{\Ss^1}\(|\pa_{\theta}\wt{g}|^2+L^2|\pa_{\rho}\wt{g}|^2+|\na_{y''}\wt{g}|^2\)\ud\HH^1(\theta)\\
&\leq C\int_{\pa Q}\ud y'\int_{\Ss^1}\(L^{-1}|\pa_{\theta}\wt{g}|^2+L|\na_{y'}\wt{g}|^2+L^{-1}|\na_{y'}\wt{g}|^2\)\ud\HH^1(\theta)\\
&\leq C(L+L^{-1})\int_{\Ga_{1,L}}|\na_{\Ga_{1,L}}g|^2\ud\HH^{n-1}\\
&\leq C(L+L^{-1})\|\na_{\Ga_{1,L}}g\|_{L^2(\Ga_{1,L})}^2,
\end{align*}
where we have used the change of variables for the second inequality. Then we complete the proof.
\end{proof}

\subsection{Monotonicity formula} In this subsection, we derive a monotonicity formula for solutions of~\eqref{EL}. This formula is a basic tool for analyzing the limiting measure $ \mu_* $ associated with $ \mu_{\va} $, defined in \eqref{definemeasurewithuva}. In view of~\eqref{assumptionbound}, standard elliptic regularity theory implies that local minimizers of~\eqref{GLfunctional} are smooth and solve \eqref{EL}. Therefore, throughout this subsection, we fix a bounded domain $ \om\subset\R^n $ with $ n\geq 3 $, and assume that $ u\in C^{\ift}(\om,\R^m) $ is a solution of~\eqref{EL}.

\begin{prop}[Monotonicity]\label{Mo}
For any $ x\in\om $ and $ r\in(0,\dist(x,\pa\om)) $, we have
\be
\f{\ud}{\ud r}(r^{2-n}E_{\va}(u,B_r(x)))
= r^{2-n}\int_{\pa B_r(x)}\left|\f{y-x}{r}\cdot\na u\right|^2\ud\HH^{n-1}(y)
+ r^{1-n}\int_{B_r(x)}\f{2}{\va^2}f(u)\ud y.\label{MonotonicityFormula}
\ee
In particular, $ r^{2-n}E_{\va}(u,B_r(x)) $ is non-decreasing in $ (0,\dist(x,\pa\om)) $.
\end{prop}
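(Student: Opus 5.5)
The plan is the standard Pohozaev-type argument: test \eqref{EL} against the radial vector field, integrate by parts, and then differentiate in $r$. Translate so that $x=0$. Multiply \eqref{EL} by $y\cdot\na u = y_j\pa_j u$ (taking the $\R^m$ inner product) and integrate over $B_r$. On the right-hand side we have
\[
\f{1}{\va^2}\int_{B_r} D_uf(u):(y\cdot\na u)=\f{1}{\va^2}\int_{B_r} y\cdot\na (f(u)).
\]
Integrating by parts, this equals
\[
\f{r}{\va^2}\int_{\pa B_r}f(u)\ud\HH^{n-1}-\f{n}{\va^2}\int_{B_r}f(u).
\]
On the left-hand side we write
\[
\int_{B_r}\Delta u:(y\cdot\na u)=\int_{\pa B_r} r|\pa_\nu u|^2-\int_{B_r}\pa_i u:\pa_i(y_j\pa_j u),
\]
and then use the identity
\[
\pa_i u:\pa_i(y_j\pa_ju)=|\na u|^2+\tfrac12 y\cdot\na|\na u|^2 .
\]
A second integration by parts turns the last term into
\[
\tfrac{r}{2}\int_{\pa B_r}|\na u|^2-\tfrac n2\int_{B_r}|\na u|^2 .
\]
All of these manipulations are legitimate because $u$ is smooth in $\om$.

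Collecting terms gives the Pohozaev identity
\[
\f{n-2}{2}\int_{B_r}|\na u|^2+\f{n}{\va^2}\int_{B_r}f(u)=r\int_{\pa B_r}e_\va(u)\ud\HH^{n-1}-r\int_{\pa B_r}|\pa_\nu u|^2 ,
\]
where $\pa_\nu u=\f{y}{r}\cdot\na u$. Rewrite the left side as
\[
(n-2)E_\va(u,B_r)+\f{2}{\va^2}\int_{B_r}f(u).
\]
Then use $\f{\ud}{\ud r}E_\va(u,B_r)=\int_{\pa B_r}e_\va(u)$, which holds for every $r$ by continuity of the integrand. This gives
\[
\f{\ud}{\ud r}\big(r^{2-n}E_\va(u,B_r)\big)=r^{1-n}\Big(r\int_{\pa B_r}e_\va-(n-2)E_\va(u,B_r)\Big)=r^{2-n}\int_{\pa B_r}|\pa_\nu u|^2+r^{1-n}\f{2}{\va^2}\int_{B_r}f(u),
\]
which is exactly \eqref{MonotonicityFormula}.

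Monotonicity then follows immediately, since both terms on the right are nonnegative by \ref{A1}. The only point needing care is keeping the constants straight in the two integrations by parts. I expect no real obstacle, as regularity is given.
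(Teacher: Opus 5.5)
Your proposal is correct and is essentially the paper's argument. The paper derives the same Pohozaev identity by testing the stress-energy identity $\pa_j(e_\va(u)\delta_{ij}-\pa_iu:\pa_ju)=0$ with the radial field $y-x$, which amounts to your multiplication of \eqref{EL} by $(y-x)\cdot\na u$; it then divides by $r^{n-1}$ exactly as you do, and your integrations by parts and constants check out.
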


Before proving Proposition \ref{Mo}, we establish two identities.

\begin{lem}[Stress-energy identity]
For any $ i\in\Z\cap[1,n] $,
\be
\pa_j\big(e_{\va}(u)\delta_{ij}-\pa_iu:\pa_ju\big)=0.\label{stressidentity}
\ee
\end{lem}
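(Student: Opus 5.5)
The identity is a direct computation: expand the divergence with the product rule and then use the Euler--Lagrange equation \eqref{EL}. Since $u\in C^{\ift}(\om,\R^m)$ solves \eqref{EL}, all derivatives below are classical and no approximation is needed.

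Fix $i\in\Z\cap[1,n]$. First, differentiate the energy density $e_{\va}(u)=\f12|\na u|^2+\va^{-2}f(u)$ in $x_i$. The chain rule gives
\[
\pa_i e_{\va}(u)=\pa_k u:\pa_i\pa_k u+\f{1}{\va^2}D_uf(u):\pa_i u,
\]
where $D_uf(u)$ denotes the gradient of $f$ in $\R^m$ evaluated at $u$.

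Second, expand the remaining term:
\[
\pa_j(\pa_i u:\pa_j u)=\pa_j\pa_i u:\pa_j u+\pa_i u:\Delta u.
\]
By \eqref{EL}, $\Delta u=\va^{-2}D_uf(u)$, so $\pa_i u:\Delta u=\va^{-2}D_uf(u):\pa_i u$.

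Third, subtract. By symmetry of mixed partial derivatives, $\pa_k u:\pa_i\pa_k u=\pa_j u:\pa_j\pa_i u$ after renaming the summation index, so these two terms cancel. The two potential terms are both equal to $\va^{-2}D_uf(u):\pa_i u$, so they cancel as well. What remains is
\[
\pa_j\big(e_{\va}(u)\delta_{ij}-\pa_iu:\pa_ju\big)=\pa_i e_{\va}(u)-\pa_j(\pi_i u:\pa_j u)\big|_{\pi=\pa}=0,
\]
which is \eqref{stressidentity}. More plainly: the left-hand side equals $\pa_i e_{\va}(u)-\pa_j(\pa_iu:\pa_ju)$, and by the first two steps this difference is zero.

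There is no real obstacle in this proof. The only points needing care are the index bookkeeping and noting that smoothness of $u$ justifies exchanging the order of differentiation.
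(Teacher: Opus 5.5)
Your proposal is correct and matches the paper's proof: expand $\pa_i e_{\va}(u)$ by the chain rule and $\pa_j(\pa_i u:\pa_j u)$ by the product rule, cancel the second-order terms by symmetry of mixed partials, and cancel the remaining terms using $\Delta u=\va^{-2}D_uf(u)$. The only blemish is the garbled intermediate expression involving $\pi_i u$ in your third step; it should simply read $\pa_i e_{\va}(u)-\pa_j(\pa_i u:\pa_j u)$, as your ``more plainly'' sentence already states.
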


\begin{proof}
Fix $ i\in\Z\cap[1,n] $. A direct computation gives
\begin{align*}
\pa_j\big(e_{\va}(u)\delta_{ij}-\pa_iu:\pa_ju\big)
&=\pa_ie_{\va}(u)-\pa_{ij}^2u:\pa_ju-\pa_iu:\Delta u\\
&=\pa_{ik}^2u:\pa_ku+\f{1}{\va^2}D_uf(u):\pa_iu-\pa_{ij}^2u:\pa_ju-\pa_iu:\Delta u\\
&=\f{1}{\va^2}D_uf(u):\pa_iu-\pa_iu:\Delta u.
\end{align*}
Using \eqref{EL}, i.e., $ \Delta u=\va^{-2}D_uf(u) $, we conclude that the right-hand side vanishes.
\end{proof}

\begin{lem}[Pohozaev identity]
Let $ U\subset\subset\om $ be a $ C^1 $ domain and let $ x\in\R^n $. Then
\be
\begin{aligned}
&\f{n-2}{2}\int_{U}|\na u|^2\ud y+\int_{U}\f{n}{\va^2}f(u)\ud y\\
&\quad\quad=\int_{\pa U}((y-x)\cdot\nu(y))e_{\va}(u)\ud\HH^{n-1}-\int_{\pa U}((y-x)\cdot\nu(y))|\pa_{\nu}u|^2\ud\HH^{n-1}
\\
&\quad\quad\quad\quad-\int_{\pa U}\pa_{\nu}u:(\na u\cdot\mathbb{P}_{\pa U}(y-x))\ud\HH^{n-1},
\end{aligned}\label{Pohozaev}
\ee
where $ \nu(y) $ is the outer unit normal, $ \pa_{\nu}u=\nu\cdot\na u $, and $ \mathbb{P}_{\pa U}(y-x) $ denotes the orthonormal projection of $ y-x $ onto the tangential space of $ \pa U $ at $ y $.
\end{lem}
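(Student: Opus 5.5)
We prove the Pohozaev identity by integrating the stress-energy identity \eqref{stressidentity} against the radial vector field $X(y):=y-x$ over $U$ and applying the divergence theorem. Since $u\in C^{\ift}(\om,\R^m)$ and $U\subset\subset\om$ is a $C^1$ domain, all integrals are finite and the divergence theorem applies on $\ol{U}$.

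\emph{Step 1.} Set $T_{ij}:=e_{\va}(u)\delta_{ij}-\pa_iu:\pa_ju$, which is symmetric. By \eqref{stressidentity}, $\pa_jT_{ij}=0$, so
\[
\pa_j\big(X_iT_{ij}\big)=(\pa_jX_i)T_{ij}=\delta_{ij}T_{ij}=ne_{\va}(u)-|\na u|^2 .
\]
The right-hand side equals $\f{n-2}{2}|\na u|^2+\f{n}{\va^2}f(u)$, which is exactly the integrand on the left of \eqref{Pohozaev}.

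\emph{Step 2.} Integrate over $U$ and use the divergence theorem:
\[
\int_U\Big(\f{n-2}{2}|\na u|^2+\f{n}{\va^2}f(u)\Big)\ud y=\int_{\pa U}X_iT_{ij}\nu_j\ud\HH^{n-1}=\int_{\pa U}\Big((X\cdot\nu)e_{\va}(u)-(X\cdot\na u):(\nu\cdot\na u)\Big)\ud\HH^{n-1}.
\]

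\emph{Step 3.} Decompose $X=(X\cdot\nu)\nu+\mathbb{P}_{\pa U}X$ at each boundary point. Then
\[
X\cdot\na u=(X\cdot\nu)\pa_\nu u+\na u\cdot\mathbb{P}_{\pa U}(y-x),
\]
and therefore
\[
(X\cdot\na u):\pa_\nu u=(X\cdot\nu)|\pa_\nu u|^2+\pa_\nu u:\big(\na u\cdot\mathbb{P}_{\pa U}(y-x)\big).
\]
Substituting this into Step 2 gives the three boundary terms of \eqref{Pohozaev}.

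The only delicate point is keeping the index and sign conventions straight in the stress-energy tensor, specifically that the contraction of $T_{ij}$ with $\pa_jX_i=\delta_{ij}$ produces $ne_{\va}-|\na u|^2$. The rest is routine.
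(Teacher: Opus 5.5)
Your proposal is correct and follows the same approach as the paper: test the stress-energy identity \eqref{stressidentity} against the vector field $y-x$, integrate by parts over $U$ to get $\int_U (ne_{\va}(u)-|\na u|^2)$ equal to the boundary flux, and then split $y-x$ into its normal and tangential parts on $\pa U$. Your index and sign bookkeeping matches the paper's computation \eqref{Pohozaev1} exactly.
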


\begin{proof}
Let $ \nu_j(y) $ be the $ j $-th component of $ \nu(y) $. Testing \eqref{stressidentity} with the vector field $ \vp(y):=y-x $ and integrating by parts, we obtain
\be
\begin{aligned}
0
&=\int_U (|\na u|^2-ne_{\va}(u))\ud y
+\int_{\pa U}(e_{\va}(u)\delta_{ij}-\pa_iu:\pa_ju)(y_i-x_i)\nu_j(y)\ud\HH^{n-1}\\
&=\f{2-n}{2}\int_U|\na u|^2\ud y-\int_U\f{n}{\va^2}f(u)\ud y\\
&\quad\quad+\int_{\pa U}((y-x)\cdot\nu(y))e_{\va}(u)\ud\HH^{n-1}
-\int_{\pa U}((y-x)\cdot\na u):\pa_{\nu}u\ud\HH^{n-1}.
\end{aligned}\label{Pohozaev1}
\ee
We decompose
\[
(y-x)=((y-x)\cdot\nu(y))\nu(y)+\mathbb{P}_{\pa U}(y-x),
\]
and substitute this into the last boundary term in \eqref{Pohozaev1}. This yields \eqref{Pohozaev}.
\end{proof}

\begin{proof}[Proof of Proposition \ref{Mo}]
Apply \eqref{Pohozaev} with $ U=B_r(x) $. Since $ \nu(y)=\f{y-x}{r} $ on $ \pa B_r(x) $, we obtain
\begin{align*}
&r \int_{\pa B_r(x)} e_{\va}(u) \ud\HH^{n-1}
- (n-2) \int_{B_r(x)} e_{\va}(u)\ud y\\
&\quad\quad= r \int_{\pa B_r(x)} \left|\f{y-x}{r}\cdot\na u\right|^2 \ud\HH^{n-1}
+ \int_{B_r(x)} \f{2}{\va^2}f(u)\ud y.
\end{align*}
Dividing both sides by $ r^{n-1} $ gives \eqref{MonotonicityFormula}.
\end{proof}

\subsection{Compactness of minimizers with bounded energy}

In this subsection, we study the convergence of local minimizers of \eqref{GLfunctional} assuming a uniform energy bound.

\begin{prop}\label{boundedenergyuse}
Let $ \om\subset\R^n $ be a bounded domain with $ n\geq 3 $. Assume that $ \{u_{\va}\}_{\va\in(0,1)} $ is a family of local minimizers of \eqref{GLfunctional} such that
\be
E_{\va}(u_{\va},\om)+\|u_{\va}\|_{L^{\ift}(\om,\R^m)}\leq M.\label{assumptionbounduniform}
\ee
Then there exist a sequence $ \va_i\to 0^+ $ and a map $ u_*\in H^1(\om,\cN) $, which is a local minimizer of the Dirichlet energy \eqref{DirichletEnergy}, such that the following properties hold:
\begin{enumerate}[label=$(\theenumi)$]
\item $ u_{\va_i}\to u_* $ strongly in $ H_{\loc}^1(\om,\R^m) $.
\item Define the singular set of $ u_* $ by
\[
\sing(u_*):=\{x\in\om:\text{$u_*$ is not continuous in any }B_r(x),\,\, r>0\}.
\]
Then $ \sing(u_*) $ is $(n-3)$-rectifiable, and $ u_{\va_i}\to u_* $ in $ C_{\loc}^0(\om\backslash\sing(u_*),\R^m) $.
\end{enumerate}
\end{prop}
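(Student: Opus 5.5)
The plan is to follow the scheme of Contreras--Lamy--Rodiac \cite{CL22, CLR18}, adapting it to local minimizers. There are three stages: weak compactness together with an energy-comparison construction, then strong convergence and minimality of the limit, and finally an $\varepsilon$-regularity argument for uniform convergence.

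\emph{Step 1: weak limit and target.} By \eqref{assumptionbounduniform} we can extract $\va_i\to0$ with $u_{\va_i}\wc u_*$ weakly in $H^1(\om)$ and strongly in $L^2$. Since $\int_\om f(u_{\va_i})\le M\va_i^2\to 0$, \ref{A1} and \ref{A3} give $f(u_*)=0$ a.e., so $u_*\in H^1(\om,\cN)$.

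\emph{Step 2: strong convergence and minimality.} Fix a ball $B_r(x)\subset\subset\om$ and a competitor $v\in H^1(B_r(x),\cN)$ with $v=u_*$ on $\pa B_r(x)$. By Fubini and Fatou, we can choose a radius $\rho$ close to $r$ such that $E_{\va_i}(u_{\va_i},\pa B_\rho)$ stays bounded and $u_{\va_i}\to u_*$ in $L^2(\pa B_\rho)$ (compactness on the sphere gives this along a subsequence). We then glue $v$ (rescaled to $B_{\rho(1-\lambda)}$) to $u_{\va_i}$ through a thin annulus $B_\rho\backslash B_{\rho(1-\lambda)}$. Because the boundary trace is only close to $\cN$ in an averaged sense, we interpolate linearly between $u_{\va_i}|_{\pa B_\rho}$ and $u_*|_{\pa B_\rho}$ in the annulus. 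Its Dirichlet cost is
\[
C\lambda\big(E(\pa B_\rho)\big)+\lambda^{-1}\|u_{\va_i}-u_*\|_{L^2(\pa B_\rho)}^2 .
\]
Its potential cost is at most $\va_i^{-2}\lambda\int_{\pa B_\rho}f(\text{interpolant})$. Using \eqref{fBconvex}-type convexity near $\cN$ and the $L^\infty$ bound, this is controlled by $\lambda\,\va_i^{-2}\int_{\pa B_\rho}\dist(u_{\va_i},\cN)^2+\va_i^{-2}\lambda\int|u_{\va_i}-u_*|^2$. For the second term, choose $\lambda=\lambda_i\to0$ slowly relative to $\va_i$ and to the $L^2$ convergence.

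I expect this gluing to be the main obstacle, because of the potential term on the interpolation layer. If the crude linear interpolation fails, I would instead project through $\Pi_\cN$ where $\dist<\delta_f$ and use a Luckhaus-type lemma as in \cite{Luc88}. Since the energy is bounded, no logarithmic topological obstruction appears, so an $\cN$-valued interpolation at scale $\lambda_i$ is available at the cost of $C\lambda_i E$.

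Minimality of $u_{\va_i}$ then gives
\[
\limsup E_{\va_i}(u_{\va_i},B_\rho)\le E_0(v,B_\rho)+o(1).
\]
Taking $v=u_*$ and combining with lower semicontinuity yields convergence of the Dirichlet norms and $\va_i^{-2}\int f\to0$, hence strong $H^1_{\loc}$ convergence. Taking a general $v$ yields local minimality of $u_*$.

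\emph{Step 3: regularity and uniform convergence.} By Schoen--Uhlenbeck \cite{SU82} and Naber--Valtorta \cite{NV17}, $\sing(u_*)$ is $(n-3)$-rectifiable. For uniform convergence off $\sing(u_*)$, take $x_0\notin\sing(u_*)$. Then $r^{2-n}E_0(u_*,B_r(x_0))$ is small for small $r$, so by strong convergence, $r^{2-n}E_{\va_i}(u_{\va_i},B_r(x_0))<\eta$ for large $i$. Proposition \ref{Mo} propagates this smallness to all nearby centers and smaller radii. We then apply an $\varepsilon$-regularity result for \eqref{EL} with small scaled energy (Chen--Struwe/Lin--Wang type, using \ref{A4}), which gives $\dist(u_{\va_i},\cN)\to0$ and uniform gradient bounds at scales above $\va_i$. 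Combined with $H^1$ convergence to a continuous $u_*$, this yields $C^0_{\loc}$ convergence through an equicontinuity (Arzelà--Ascoli) argument away from $\sing(u_*)$.
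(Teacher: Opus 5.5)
Your architecture matches the paper's, which proves this proposition essentially by citation. Step~1 is the paper's Fatou argument. Step~2 is what the paper imports from \cite[Proposition B.1]{CL22}. Step~3 is what it imports from \cite{SU82}, \cite{NV17} and \cite[Theorem 1.3]{CLR18}, and your $\varepsilon$-regularity sketch is a fair description of the last of these. The problem is Step~2: your self-contained gluing carries both part~(1) and the minimality of $u_*$, and it does not work as written. Set $d_i:=\|u_{\va_i}-u_*\|_{L^2(\pa B_\rho)}$. With the bound you wrote, the interpolation layer costs about $\lambda_i^{-1}d_i^2$ in Dirichlet energy and $\va_i^{-2}\lambda_i d_i^2$ in potential energy. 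Their product is $\va_i^{-2}d_i^4$, so no choice of $\lambda_i$ makes both vanish unless $d_i^2=o(\va_i)$. Nothing provides such a rate of convergence on spheres. Letting $\lambda_i\to0$ slowly only makes the potential term worse.

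Your fallback points in the right direction but skips the one nontrivial point. The lemma of \cite{Luc88} interpolates between two $\cN$-valued maps, and you cannot get the $\cN$-valued replacement by applying $\Pi_\cN$ to $u_{\va_i}|_{\pa B_\rho}$ ``where $\dist<\delta_f$''. Bounded energy on an $(n-1)$-sphere does not give $\sup_{\pa B_\rho}\dist(u_{\va_i},\cN)<\delta_f$. A core of size $\va_i$ where $u_{\va_i}$ is far from $\cN$ costs only $O(\va_i^{n-3})$ on the sphere. Such cores do occur near $\sing(u_*)$, which, for $n\geq4$, spheres cannot in general avoid. So $\Pi_\cN\circ u_{\va_i}$ is not an $H^1(\pa B_\rho,\cN)$ map.

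What you need is a grid construction as in Lemma~\ref{Luckhauslemma}, with mesh $h_i\to0$ slowly:
\begin{enumerate}
\item Obtain uniform closeness to $\cN$ only on the $1$-skeleton, from the one-dimensional estimate on $G(\dist(u_{\va_i},\cN))$, and project there.
\item Use Lemma~\ref{LowerBound} to show that all $2$-cell boundary loops are null-homotopic, because $c_0\log(h_i/\va_i)-C$ eventually exceeds $Ch_i^{3-n}E$. This is where your ``no logarithmic obstruction'' remark actually enters.
\item Extend homogeneously on the higher cells. This gives $v_i\in H^1(\pa B_\rho,\cN)$, joined to $u_{\va_i}$ at cost $Ch_iE$.
\item Check that $\|v_i-u_*\|_{L^2(\pa B_\rho)}\to0$, using Poincar\'e on cells of size $h_i$.
\item Only then apply the classical Luckhaus lemma between $v_i$ and $u_*$.
\end{enumerate}
Once you supply this two-stage interpolation, or simply cite \cite[Proposition B.1]{CL22} as the paper does, the rest of your Step~2 goes through: the choice of good radii, extending competitors by $u_*$, and energy convergence giving strong $H^1_{\loc}$ convergence.
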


\begin{proof}
By \eqref{assumptionbounduniform}, the sequence $ \{u_{\va}\}_{\va\in(0,1)} $ is bounded in $ H^1(\om,\R^m) $. Hence, there exist a subsequence $ \va_i\to 0^+ $ and a map $ u_*\in H^1(\om,\R^m) $ such that
\[
u_{\va_i}\wc u_* \quad\text{weakly in }H^1(\om,\R^m).
\]
Since $ f\geq 0 $, Fatou's lemma yields
\[
0\leq\int_{\om}f(u_*)\leq\liminf_{i\to+\ift}\int_{\om}f(u_{\va_i})\leq \liminf_{i\to+\ift} \va_i^2 E_{\va}(u_{\va_i},\om)\leq \liminf_{i\to+\ift} M\va_i^2=0.
\]
Thus, $ f(u_*)=0 $ a.e. in $ \om $, and hence $ u_*\in H^1(\om,\cN) $. It follows from \cite[Proposition B.1]{CL22} that $ u_* $ is a local minimizer of \eqref{DirichletEnergy} and that $ u_{\va_i}\to u_* $ strongly in $ H_{\loc}^1(\om,\R^m) $. The $(n-3)$-rectifiability of $ \sing(u_*) $ follows from \cite[Theorem II]{SU82} and \cite[Theorem 1.5]{NV17}. Finally, by \cite[Theorem 1.3]{CLR18}, we have $ u_{\va_i}\to u_* $ in $ C_{\loc}^0(\om\backslash\sing(u_*),\R^m) $. We note that \ref{A4} corresponds to \cite[(1.6)]{CLR18} and the assumption $ \|u_{\va}\|_{L^{\ift}(\om)}\leq M $ plays the role of \cite[(1.4)]{CLR18}.
\end{proof}

\subsection{Sufficient conditions for \eqref{assumptionbound}}

In this subsection, we provide a simple criterion that ensures the assumptions in \eqref{assumptionbound} for global minimizers.

\begin{prop}\label{sufficientconditions}
Let $n\geq 3$, and let $\om\subset\R^n$ be a $C^1$-domain that is bi-Lipschitz equivalent to a ball. More precisely, assume that there exists a bi-Lipschitz homeomorphism $ \Phi:\overline{\om}\to\overline{B}_1 $ 
such that
\be
\|\na\Phi\|_{L^{\ift}(\om)}+\|\na(\Phi^{-1})\|_{L^{\ift}(B_1)}\leq M_0.\label{PhiestimateM0uva}
\ee
Assume that $\{g_{\va}\}_{\va\in(0,1)}\subset H^1(\pa\om,\R^m)$ satisfies
\be
E_{\va}(g_{\va},\pa\om)\leq M_0(|\log\va|+1),
\qquad
\|g_{\va}\|_{L^{\ift}(\pa\om,\R^m)}\leq M_0, \label{gvaassumption}
\ee
for some $M_0>0$. Suppose that there exists $R_0>0$ such that the potential $f\in C^{\ift}(\R^m,[0,+\ift))$ satisfies
\be
y\cdot Df(y)\geq 0
\qquad\text{for all } |y|\geq R_0. \label{ygeqR0}
\ee
For each $\va\in(0,1)$, let $u_{\va}\in H^1(\om,\R^m)$ be a global minimizer of \eqref{GLfunctional} with boundary condition $g_{\va}$; that is,
\[
E_{\va}(u_{\va},\om)\leq E_{\va}(v_{\va},\om)
\]
for every $v_{\va}\in H^1(\om,\R^m)$ such that $v_{\va}=g_{\va}$ on $\pa\om$ in the trace sense. Then
\begin{gather}
E_{\va}(u_{\va},\om)\leq C(|\log\va|+1),
\qquad
\|u_{\va}\|_{L^{\ift}(\om,\R^m)}\leq C, \label{logassumption}
\end{gather}
where $C=C(n,f,M_0,\cN,R_0)$.
\end{prop}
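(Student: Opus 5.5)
The plan splits into the $L^\infty$ bound, proved by a truncation argument, and the energy bound, proved by building a competitor with the same boundary data.

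\textbf{Step 1: the $L^\infty$ bound.} Set $R:=\max\{R_0,M_0\}$ and let $P:\R^m\to\ol{B}^m_R$ be the radial retraction, $P(y)=y$ for $|y|\le R$ and $P(y)=Ry/|y|$ otherwise. The truncation $\wt u:=P\circ u_\va$ has trace $g_\va$ because $|g_\va|\le M_0\le R$. Since $P$ is $1$-Lipschitz, $|\na\wt u|\le|\na u_\va|$ pointwise. For the potential we use \eqref{ygeqR0}. For $|y|\ge R_0$, the map $t\mapsto f(ty/|y|)$ is non-decreasing on $[R_0,\infty)$, because its derivative is $\f{y}{|y|}\cdot Df(ty/|y|)\ge0$. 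Hence $f(P(y))\le f(y)$, and so $E_\va(\wt u,\om)\le E_\va(u_\va,\om)$.

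To conclude $|u_\va|\le R$, rather than just exhibit another minimizer, we argue as follows. Equality in the energy forces $|\na\wt u|=|\na u_\va|$ a.e. On $\{|u_\va|>R\}$ the tangential part of the differential of $P$ has norm $R/|u_\va|<1$. Therefore the tangential derivatives of $u_\va$ vanish a.e. there, so $\na u_\va=\na|u_\va|\otimes u_\va/|u_\va|$ on that set. We then consider the minimal-energy comparison. As a cleaner alternative, if strictness is delicate, we use the Euler--Lagrange equation. Minimizers are smooth inside (\eqref{EL}), and
\[
\Delta\tfrac12|u|^2=|\na u|^2+\va^{-2}u\cdot Df(u)\ge0 \quad\text{on }\{|u|>R_0\}.
\]
So $|u|^2$ is subharmonic there, and the maximum principle, together with the boundary bound $|g_\va|\le M_0$, gives $\|u_\va\|_{L^\infty}\le\max\{R_0,M_0\}$. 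Using this requires some boundary regularity or a weak formulation, for instance testing the equation with $(|u|^2-R^2)_+$, which is admissible since it vanishes on $\pa\om$. This test-function argument is the one I will write down.

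\textbf{Step 2: the energy bound.} Here we build a competitor $v_\va$ with the same trace $g_\va$. Transfer to the ball via $\Phi$: set $h_\va:=g_\va\circ\Phi^{-1}$ on $\pa B_1$. By \eqref{PhiestimateM0uva} and \eqref{gvaassumption}, $E_\va(h_\va,\pa B_1)\le C(|\log\va|+1)$ and $|h_\va|\le M_0$. Define $v$ on $B_1$ as the homogeneous degree-zero extension $v(x)=h_\va(x/|x|)$ in the shell $B_1\setminus B_{1/2}$, glued in $B_{1/2}$ to a constant $p\in\cN$ through a linear interpolation in the radial variable on $B_{1/2}\setminus B_{1/4}$. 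Then compose with $\Phi$.

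The energy estimates are as follows.
\begin{itemize}
\item Shell: for $n\ge3$, the homogeneous extension costs at most $C\,E_\va(h_\va,\pa B_1)\le C(|\log\va|+1)$, since $\int_{1/2}^1\rho^{n-3}\,\ud\rho\le C$ and the potential term integrates the same way.
\item Interpolation region: the gradient term is bounded by $C(1+M_0)^2$. The potential term, however, is of order $\va^{-2}\sup_{|y|\le C}f$, which is far too large.
\end{itemize}

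\textbf{Main obstacle.} The interpolation layer must be made thin, of width $\va$, so that the potential cost there is $O(\va^{-2}\cdot\va)$ in each radial fibre. That is still too much, and the width must be compensated. The resolution is to interpolate at scale $\va$, that is, on $B_{1/2}\setminus B_{1/2-\va}$. Then the gradient cost is at most $C\va^{-2}\cdot\va=C\va^{-1}$, which is also bad.

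So instead we first project most of the sphere onto $\cN$. Where $\dist(h_\va,\cN)<\delta_{\cN}$, $\Pi_\cN\circ h_\va$ is defined. The region where it is not defined has measure at most $C\va^2|\log\va|$ by \eqref{A31} and \eqref{A3}. This is the delicate step. If that route fails, I would fall back on the explicit two-dimensional constructions (Lemma~\ref{upperboundleast}, Lemma~\ref{ExtensionLemma2}) via a grid of cells at scale $\va$ on $\pa B_1$. Either way the target is an estimate $E_\va(v_\va,\om)\le C(|\log\va|+1)$, and minimality of $u_\va$ then gives the same bound for $u_\va$.
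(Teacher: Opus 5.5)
Your Step~2 has a genuine gap: you never exhibit an admissible competitor with energy $O(|\log\va|+1)$. Gluing the $0$-homogeneous extension to a constant $p\in\cN$ costs about $\va^{-2}\sup_{|y|\le C}f$ times the volume of the gluing layer. The remedies you list (thin layers, projecting onto $\cN$, grids at scale $\va$ with Lemma~\ref{upperboundleast} or Lemma~\ref{ExtensionLemma2}) are not carried out, and none is needed.

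The paper simply takes the $0$-homogeneous extension $H_\va(x):=h_\va(x/|x|)$ on the \emph{whole} ball. Writing $x=\rho\omega$, one has $|\na H_\va(x)|^2=\rho^{-2}|\na_{\pa B_1}h_\va(\omega)|^2$ and $f(H_\va(x))=f(h_\va(\omega))$, hence
\[
E_\va(H_\va,B_1)=\f{1}{2(n-2)}\int_{\pa B_1}|\na_{\pa B_1}h_\va|^2\ud\HH^{n-1}+\f{1}{n\va^2}\int_{\pa B_1}f(h_\va)\ud\HH^{n-1}\le\f{1}{n-2}E_\va(h_\va,\pa B_1).
\]
The origin is harmless because $\int_0^1\rho^{n-3}\ud\rho<\infty$ for $n\geq 3$; this is exactly your shell computation, extended down to $\rho=0$. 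The potential term is no obstacle either: the extension only takes values already taken by $h_\va$, so its potential energy is a fraction of the boundary potential energy, which \eqref{gvaassumption} controls. Then $v_\va:=H_\va\circ\Phi$ has trace $g_\va$ and $E_\va(v_\va,\om)\le C(n,M_0)E_\va(g_\va,\pa\om)\le C(|\log\va|+1)$, and minimality concludes.

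In Step~1, the Euler--Lagrange route you commit to is circular. With no growth assumption on $f$, you cannot in general differentiate $t\mapsto\int_\om f(u_\va+t\phi)$ unless $u_\va$ is already bounded. So $u_\va$ is not known to solve \eqref{EL} weakly, and $(|u_\va|^2-R^2)_+$ is not known to lie in $H^1$. The paper itself obtains smoothness only \emph{after} the $L^\infty$ bound.

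Your truncation argument, however, closes with one more observation. For $|y|>R$, the differential $DP(y)=\f{R}{|y|}\bigl(I-\f{y\otimes y}{|y|^2}\bigr)$ kills the radial direction. Let $\na_Tu_\va$ denote the part of $\na u_\va$ orthogonal to $u_\va$. On $\{|u_\va|>R\}$ one then has $|\na(P\circ u_\va)|^2=\f{R^2}{|u_\va|^2}|\na_Tu_\va|^2$, while $|\na u_\va|^2=|\na_Tu_\va|^2+|\na|u_\va||^2$. Both integrands are pointwise ordered, so equality of the two energies forces $\na u_\va=0$ a.e. on $\{|u_\va|>R\}$, not merely its tangential part. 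Thus $(|u_\va|-R)_+\in H^1_0(\om)$ has vanishing gradient and hence vanishes, giving $\|u_\va\|_{L^\infty}\le\max\{R_0,M_0\}$. This is a self-contained proof of the maximum principle for minimizers that the paper takes from the literature.
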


\begin{proof}
We first prove the energy bound by constructing an admissible comparison map. Define the boundary map $ h_{\va}:=g_{\va}\circ \Phi^{-1} $ on $ \pa B_1 $. Since $\Phi|_{\pa\om}$ is bi-Lipschitz with constants controlled by $M_0$, the trace composition theorem and the area formula give
\be
E_{\va}(h_{\va},\pa B_1)
\leq C E_{\va}(g_{\va},\pa\om)
\leq C(|\log\va|+1), \label{boundaryenergyonball}
\ee
where $C$ depends only on $n$ and $M_0$.

Let $H_{\va}:B_1\to\R^m$ be the homogeneous extension of $h_{\va}$:
\[
H_{\va}(y):=h_{\va}\left(\frac{y}{|y|}\right)
\quad\text{for } y\in B_1\backslash\{0\}.
\]
Note that $ H_{\va}|_{\pa B_1}=h_{\va} $. Because $n\geq 3$, we have $H_{\va}\in H^1(B_1,\R^m)$. Moreover,
\[
E_{\va}(H_{\va},B_1)\leq C(n)E_{\va}(h_{\va},\pa B_1).
\]
Now set $ v_{\va}:=H_{\va}\circ\Phi $ in $ \om $. Then $v_{\va}|_{\pa\om}=g_{\va}$. Using \eqref{PhiestimateM0uva}, we obtain
\[
E_{\va}(v_{\va},\om)
\leq C E_{\va}(H_{\va},B_1)
\leq C E_{\va}(g_{\va},\pa\om)
\leq C(|\log\va|+1).
\]
By the minimality of $u_{\va}$,
\[
E_{\va}(u_{\va},\om)
\leq E_{\va}(v_{\va},\om)
\leq C(|\log\va|+1).
\]

It remains to prove the uniform $L^{\ift}$-bound. By \cite[Lemma 8.3]{Lam14} and the assumption \eqref{ygeqR0}, the maximum principle for minimizers gives
\[
\|u_{\va}\|_{L^{\ift}(\om,\R^m)}
\leq R_0+\|g_{\va}\|_{L^{\ift}(\pa\om,\R^m)}
\leq R_0+M_0.
\]
Combining this estimate with the energy bound proves \eqref{logassumption}.
\end{proof}

\begin{rem}
The conditions in Proposition \ref{sufficientconditions} are not intended to be optimal. Rather, they provide a convenient criterion that can be verified in applications. A class of boundary data $ \{g_{\va}\}_{\va\in(0,1)} $ that meets \eqref{gvaassumption} can be constructed as \cite{BBO01}. More precisely, one may assume that $ \om $ is smooth and that there exists a finite collection $ \Sg $ of smooth $ (n-3) $-dimensional submanifolds of $ \pa\om $ such that
\begin{align*}
|g_{\va}(x)|&\leq C_0,\quad\text{for any }x\in\pa\om,\\
g_{\va}(x)&\in\cN,\quad\text{if }x\in\pa\om\text{ and }\dist(x,\Sg)\geq\va,\\
|\na_{\pa\om}g_{\va}(x)|&\leq\frac{C_0}{\max(\dist(x,\Sg),\va)} \quad\text{for any }x\in\pa\om.
\end{align*}
Such boundary data are known to satisfy energy bounds of the form \eqref{gvaassumption}.
\end{rem}

\section{Luckhaus-type lemmas}\label{SectionLuckhaus}

\subsection{Statements of the results}

In this section, we collect two Luckhaus-type lemmas. The first lemma is stated on the sphere, whereas the second one is formulated on cylindrical domains. These results will be used to prove the clearing-out lemma and derive energy quantization. We begin with the spherical version.

\begin{lem}\label{Luckhauslemma}
Assume that $ \{u_{\ol{\va}}\}_{\ol{\va}\in(0,1)}\subset H^1(\pa B_1,\R^m) $ satisfies
\be
\|u_{\ol{\va}}\|_{L^{\ift}(\pa B_1)}\leq M
\label{Boun}
\ee
for some $ M>0 $. Then there exist constants $ \eta_0,\ol{\va}_0\in(0,1) $ and $ C>0 $, depending only on $ f,M,\cN $, and $ n $, such that the following properties hold.

For any $ \eta\in(0,100^{-n}\eta_0) $ and $ \ol{\va}\in(0,\ol{\va}_0) $, if
\begin{align}
E_{\ol{\va}}(u_{\ol{\va}},\pa B_1)\leq\eta|\log\ol{\va}|,
\label{log1}
\end{align}
then there exist $ v_{\ol{\va}}\in H^1(\pa B_1,\cN) $ and $ \vp_{\ol{\va}}\in H^1(B_1\backslash B_{1-h(\ol{\va})},\R^m) $ such that
\begin{gather}
\vp_{\ol{\va}}(x)=u_{\ol{\va}}(x)\text{ and }
\vp_{\ol{\va}}((1-h(\ol{\va}))x)=v_{\ol{\va}}(x)
\quad \text{for }\HH^{n-1}\text{-a.e. }x\in\pa B_1,
\label{interpoL}\\
\frac{1}{2}\int_{\pa B_1}|\na_{\pa B_1}v_{\ol{\va}}|^2\ud\HH^{n-1}
\leq C E_{\ol{\va}}(u_{\ol{\va}},\pa B_1),
\label{VboundL}\\
E_{\ol{\va}}(\vp_{\ol{\va}},B_1\backslash B_{1-h(\ol{\va})})
\leq Ch(\ol{\va})E_{\ol{\va}}(u_{\ol{\va}},\pa B_1),
\label{WboundL}
\end{gather}
where
\be
h(\ol{\va}):=\left(\frac{E_{\ol{\va}}(u_{\ol{\va}},\pa B_1)+\eta_0}{\eta_0|\log\ol{\va}|}\right)^{\frac{1}{n-3}}
\in\left(0,\frac{1}{100}\right).
\label{hdef}
\ee
\end{lem}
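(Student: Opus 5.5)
We follow the classical Luckhaus construction \cite[Lemma~1]{Luc88}, with the grid scale chosen adaptively as $h:=h(\ol{\va})$. Write $E:=E_{\ol{\va}}(u_{\ol{\va}},\pa B_1)$. First we pick a cubical decomposition of $\pa B_1$ of mesh $h$, obtained by bi-Lipschitz transfer of the boundary of a cube onto the sphere. By averaging over translations and rotations (Fubini), we choose the decomposition so that on every $k$-skeleton, $k\le n-2$, the restriction of $u_{\ol{\va}}$ lies in $H^1$ with
\[
\int_{\text{$k$-skeleton}} e_{\ol{\va}}(u_{\ol{\va}})\ud\HH^k\le C h^{k-(n-1)} E .
\]
We also ask, for each $k$-cell, that the cell energy be controlled by $C h^{k-(n-1)}$ times the energy on a neighbourhood of size $h$. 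The key quantity is the energy on an individual $2$-cell $Q$. We do not need a uniform bound on every cell. What we use is the following: on the $2$-skeleton the total energy is at most $Ch^{3-n}E$, while by \eqref{hdef} one has $h^{n-3}\,\eta_0|\log\ol{\va}|=E+\eta_0$. Hence
\[
E_{\ol{\va}}(u_{\ol{\va}},\pa Q)\le C h^{-1}\cdot h^{3-n}E\cdot h\ \text{(scaled)}\quad\text{and}\quad E_{\ol{\va}}(u,Q)\le C\eta_0\log\frac{h}{\ol{\va}} .
\]
Here we use that $h\ge |\log\ol{\va}|^{-1/(n-3)}$, so $\log(h/\ol{\va})\simeq|\log\ol{\va}|$ once $\ol{\va}<\ol{\va}_0$. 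To make this rigorous we first prove $\dist(u_{\ol{\va}},\cN)<\delta_f$ on the $1$-skeleton, and then show that $\Pi_{\cN}\circ u_{\ol{\va}}$ is null-homotopic on $\pa Q$.

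\emph{Step 1 ($1$-skeleton close to $\cN$).} On each $1$-cell $\ell$ of length $h$, the standard $1$D estimate gives
\[
\sup_\ell \dist(u,\cN)^2\le C\Bigl(\int_\ell |\pa u|^2\Bigr)^{1/2}\Bigl(\int_\ell f(u)\Bigr)^{1/2}+C h^{-1}\int_\ell f(u) .
\]
Using $\int_\ell e_{\ol{\va}}\le C h^{2-n}E$, this is at most $C\ol{\va}h^{2-n}E+C h^{3-n}\ol{\va}^2E$. Since $E\le \eta|\log\ol{\va}|$ and $h^{n-3}\ge 1/|\log\ol{\va}|$, the bound is $\ll\delta_f^2$ for $\ol{\va}$ small. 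In particular $\Pi_{\cN}\circ u$ is defined on the $1$-skeleton, with $\int|\pa(\Pi_\cN\circ u)|^2\le C\int e_{\ol{\va}}$ there.

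\emph{Step 2 (no topological defect on $2$-cells).} This is the main obstacle. For each $2$-cell $Q$, Lemma~\ref{LowerBound}, transported to $Q$ by a bi-Lipschitz map, gives
\[
|\sg_Q|_*\log\frac{h}{\ol{\va}}-C\le E_{\ol{\va}}(u,Q)+ChE_{\ol{\va}}(u,\pa Q)\le C h^{3-n}E\le C\eta_0|\log\ol{\va}| .
\]
Choosing $\eta_0$ small relative to $c_0$ in \eqref{c0geq}, we obtain $\sg_Q=0$. The subtle point is the additive constant $C$ and the lower-order terms. The summand $\eta_0$ in the numerator of \eqref{hdef} ensures $h^{n-3}\eta_0|\log\ol{\va}|\ge\eta_0$, which absorbs these, and the hypothesis $\eta<100^{-n}\eta_0$ gives $h<1/100$.

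\emph{Step 3 (extension and interpolation).} With trivial classes on all $2$-cells, define $v$ on the $1$-skeleton as $\Pi_\cN\circ u$. Extend over $2$-cells by Lemma~\ref{ExtensionLemma1} and over higher cells ($k\ge3$) by Lemma~\ref{ExtensionLemma2}, keeping the linear (rather than square-root) bound by using the $H^1$ lifting and \cite[Lemma 2.15]{BSV25} on the lifted maps in $\wt\cN$, where all classes vanish. Summing with the cell weights gives \eqref{VboundL}. Then define $\vp$ on $B_1\backslash B_{1-h}$ cell by cell, using the product $Q\times[1-h,1]$ structure. On the lateral faces over the skeleton we use linear interpolation $t u+(1-t)v$ followed by homogeneous extension. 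The potential term is controlled by \eqref{fBconvex} and the closeness of Step~1. The scaling of the standard Luckhaus computation yields the factor $h$ in \eqref{WboundL}.
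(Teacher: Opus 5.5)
Your overall architecture matches the paper's proof: a Fubini-good grid of mesh $h$, projection onto $\cN$ on the $1$-skeleton, Lemma~\ref{LowerBound} with \eqref{c0geq} to rule out defects on $2$-cells, Lemma~\ref{ExtensionLemma1} on $2$-cells, and linear interpolation over the $1$-skeleton followed by cone extensions for $\vp_{\ol{\va}}$. However, your extension of $v_{\ol{\va}}$ over cells of dimension $j\ge 3$ is not justified as written. Lemma~\ref{ExtensionLemma2} gives only the square-root bound. On a $j$-cell $c$ of size $h$ it reads $\int_{c}|\na v|^2\le Ch^{\f{j-1}{2}}\big(\int_{\pa c}|\na v|^2\big)^{\f12}$. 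Summing over the $\sim h^{1-n}$ cells with Cauchy--Schwarz then gives only a bound of order $h^{j-n}E^{\f12}$, i.e.\ $h^{-1}E^{\f12}$ at $j=n-1$, which is not \eqref{VboundL}. Your proposed repair rests on a false premise. Passing to $\wt\cN$ kills only $\pi_1$, while $\pi_k(\wt\cN)\cong\pi_k(\cN)$ for $k\ge 2$; for instance $\cN=\Ss^2$ is already simply connected with $\pi_2\neq 0$. Moreover, in the paper \cite[Lemma~2.15]{BSV25} is exactly the source of the square-root estimate in Lemma~\ref{ExtensionLemma2}. The missing observation is that for $j\ge 3$ no topological condition is needed at all. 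The $0$-homogeneous extension $v(x):=v\big(\phi_{ij}^{-1}(h\phi_{ij}(x)/|\phi_{ij}(x)|)\big)$ is $\cN$-valued, lies in $H^1$ because $j\ge 3$, and satisfies $\int_{c_{ij}}|\na v|^2\le Ch\int_{\pa c_{ij}}|\na v|^2$. Iterating from $j=3$ to $j=n-1$ gives \eqref{inductionvolva}, hence \eqref{VboundL}. This is what the paper does, and it is why topology enters only through the $2$-cells.

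Your Step~1 takes a genuinely different and simpler route to the paper's \eqref{dvalimit0}. You combine the one-dimensional bound $\sup_\ell d^2\le Ch^{-1}\int_\ell d^2+2\big(\int_\ell d^2\big)^{\f12}\big(\int_\ell|\pa u|^2\big)^{\f12}$ with $\dist(y,\cN)^2\le Cf(y)$ for $|y|\le M$, which follows from \eqref{A31}, $f^{-1}(0)=\cN$ and compactness. The paper instead bounds the oscillation of $G(d_{\ol{\va}})$, with $G(t)=\int_0^t\psi^{1/6}$, via Young's inequality and $W^{1,3/2}\hookrightarrow C^{0,1/3}$ on each $1$-cell, and then separately controls the cell averages of $d_{\ol{\va}}$. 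Both arguments rely on the same lower bound $h\ge|\log\ol{\va}|^{-1/(n-3)}$. There is a slip in your exponent: $h^{-1}\int_\ell f(u)\le C\ol{\va}^2h^{1-n}E$, not $C\ol{\va}^2h^{3-n}E$, though this still tends to $0$. The per-cell energy requirement you place on the grid cannot be enforced for all cells by a single Fubini choice, but as you note it is unnecessary. Finally, the additive constant in Lemma~\ref{LowerBound} is absorbed simply by taking $\ol{\va}$ small. What the summand $\eta_0$ in \eqref{hdef} actually provides is the lower bound on $h$ above, which is used for $\log(h/\ol{\va})\ge\f12|\log\ol{\va}|$, in your Step~1, and for $h\ge\ol{\va}$ in the interpolation.
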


We next present a version of cylindrical domains to be used in product-type regions. We recall that $\Gamma_{r,L} := \partial B_r^2 \times B_L^{n-2}$.

\begin{lem}\label{Luckhauslemma1}
Let $ \delta\in(0,\frac{1}{10}] $. Assume that $ \{g_{\delta,\ol{\va}}\}_{\ol{\va}\in(0,1)}\subset H^1(\Ga_{\delta,1},\R^m) $ satisfies
\be
\|g_{\delta,\ol{\va}}\|_{L^{\ift}(\Ga_{\delta,1},\R^m)}\leq M
\label{fGboundbound0}
\ee
for some $ M>0 $. Then there exist constants $ \eta_1,\wh{\va}_1\in(0,1) $, depending only on $ f,M,\cN $, and $ n $, such that the following properties hold.

If $ \ol{\va}\in(0,\wh{\va}_1\delta) $ and
\be
E_{\ol{\va}}(g_{\delta,\ol{\va}},\Ga_{\delta,1})
\leq \delta^{n-3}\eta_1\log\frac{\delta}{\ol{\va}},
\label{GvaboundaryH10}
\ee
then there exist
\[
r\in\left(1-\delta,1-\frac{\delta}{2}\right), \quad
v_{\delta,\ol{\va}}\in H^1(\pa B_\delta^2\times B_r^{n-2},\cN), \quad
\vp_{\delta,\ol{\va}}\in H^1((B_1^2\backslash B_{\frac{\delta}{2}}^2)\times B_r^{n-2},\R^m),
\]
and a constant $ C>0 $, depending only on $ f,M,\cN $, and $ n $, such that the following assertions hold.

\begin{enumerate}[label=$(\theenumi)$]
\item For $ \HH^{n-1} $-a.e. $ x=(y,z)\in \pa B_{\delta}^2\times B_r^{n-2} $,
\[
\vp_{\delta,\ol{\va}}(x)=g_{\delta,\ol{\va}}(x),
\quad
\vp_{\delta,\ol{\va}}\left(\frac{y}{2},z\right)=v_{\delta,\ol{\va}}(y,z).
\]

\item The maps $ v_{\delta,\ol{\va}} $ and $ \vp_{\delta,\ol{\va}} $ satisfy
\begin{align}
E_{\ol{\va}}(v_{\delta,\ol{\va}},\pa B_{\frac{\delta}{2}}^2\times B_r^{n-2})
&\leq C E_{\ol{\va}}(g_{\delta,\ol{\va}},\Ga_{\delta,1}),
\label{Vvaes1}\\
E_{\ol{\va}}(\vp_{\delta,\ol{\va}},(B_{\delta}^2\backslash B_{\frac{\delta}{2}}^2)\times B_r^{n-2})
&\leq C\delta E_{\ol{\va}}(g_{\delta,\ol{\va}},\Ga_{\delta,1}).
\label{Wvaest1}
\end{align}

\item The trace
\[
\vp_{\delta,\ol{\va}}^{*}
:=\vp_{\delta,\ol{\va}}\big|_{(B_{\delta}^2\backslash B_{\frac{\delta}{2}}^2)\times\pa B_r^{n-2}}
\in H^1((B_{\delta}^2\backslash B_{\frac{\delta}{2}}^2)\times\pa B_r^{n-2},\R^m)
\]
satisfies
\be
E_{\ol{\va}}(\vp_{\delta,\ol{\va}}^*,(B_{\delta}^2\backslash B_{\frac{\delta}{2}}^2)\times\pa B_r^{n-2})
\leq C E_{\ol{\va}}(g_{\delta,\ol{\va}},\Ga_{\delta,1}).
\label{WpluesminusC11}
\ee
\end{enumerate}
\end{lem}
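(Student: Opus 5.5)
The plan is to run the skeleton-based Luckhaus construction on $\Ga_{\delta,1}=\pa B_\delta^2\times B_1^{n-2}$, using a grid of fixed mesh comparable to $\delta$. The circle factor $\pa B_\delta^2$ has length $2\pi\delta$, so a grid of mesh $\sim\delta$ is natural, and no adaptive scale is needed, in contrast with Lemma \ref{Luckhauslemma}. Write $g=g_{\delta,\ol{\va}}$ and $E:=E_{\ol{\va}}(g,\Ga_{\delta,1})$.

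\textbf{Step 1 (choice of $r$ and of the grid).} By Fubini in the radial variable of $B_1^{n-2}$, choose $r\in(1-\delta,1-\frac{\delta}{2})$ such that
\[
E_{\ol{\va}}(g,\pa B_\delta^2\times\pa B_r^{n-2})\le C\delta^{-1}E .
\]
Fix a bi-Lipschitz map, with constants depending only on $n$, from $B_r^{n-2}$ onto a cube, as in the proof of Lemma \ref{cylinderextension1}. Transport a cubical decomposition of mesh $\sim\delta$ of $\pa B_\delta^2\times(\text{cube})$ so that $\pa B_\delta^2\times\pa B_r^{n-2}$ is a union of $(n-2)$-cells. Averaging over translations of the grid, and handling the boundary faces by Fubini in the same way, gives for the $k$-skeleton $\Sigma_k$, $k\le n-1$,
\[
E_{\ol{\va}}(g,\Sigma_k)\le C\delta^{k-(n-1)}E .
\]
This uses $\|g\|_{L^\infty}\le M$ from \eqref{fGboundbound0}. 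In particular, each $1$-cell, which has length $\sim\delta$, carries energy at most $C\eta_1\delta^{-1}\log\frac{\delta}{\ol{\va}}$. Each $2$-cell carries energy at most $C\eta_1\log\frac{\delta}{\ol{\va}}$, by \eqref{GvaboundaryH10}.

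\textbf{Step 2 (projection onto $\cN$ on the $1$-skeleton and triviality on $2$-cells).} On a $1$-cell $\ell$, the one-dimensional estimate $\big|\tfrac{\ud}{\ud s}\dist(g,\cN)^2\big|\le C\dist(g,\cN)|g'|$, combined with \eqref{A31}, gives
\[
\sup_\ell\dist(g,\cN)^2\le C\Big(\ol{\va}\,E_{\ol{\va}}(g,\ell)+\delta^{-1}\!\int_\ell\dist(g,\cN)^2\Big)\le C\eta_1\frac{\ol{\va}}{\delta}\log\frac{\delta}{\ol{\va}}+C\ol{\va}^2\delta^{-1}E_{\ol{\va}}(g,\ell).
\]
This is $<\delta_f^2$ once $\wh{\va}_1$ is small. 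Hence $v:=\Pi_\cN\circ g$ is well defined on $\Sigma_1$, with $E(v,\Sigma_1)\le CE(g,\Sigma_1)$.

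For each $2$-cell $F$, apply Lemma \ref{LowerBound} on $F$, which is bi-Lipschitz to a disc of radius $\sim\delta$. If the class $\sg$ of $v|_{\pa F}$ were nontrivial, then \eqref{c0geq} would give
\[
c_0\log\frac{\delta}{\ol{\va}}-C\le E_{\ol{\va}}(g,F)+C\delta E_{\ol{\va}}(g,\pa F)\le C\eta_1\log\frac{\delta}{\ol{\va}} .
\]
This is impossible for $\eta_1$ small and $\wh{\va}_1$ small. So every $v|_{\pa F}$ is null-homotopic. Lemma \ref{ExtensionLemma1} then extends it to $F$ with $\|\na v\|_{L^2(F)}^2\le C\delta E(v,\pa F)$. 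Cells of dimension at least $3$ are filled by homogeneous ($0$-homogeneous) extension from the cell barycenter, which costs a factor $C\delta$ per dimension. Summing over cells gives \eqref{Vvaes1}, after the rescaling $\pa B_\delta^2\mapsto\pa B_{\delta/2}^2$.

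\textbf{Step 3 (interpolation $\vp$ in $(B_\delta^2\backslash B_{\delta/2}^2)\times B_r^{n-2}$).} Each cell $K$ of the grid gives a product cell $K\times[\delta/2,\delta]$ in the radial variable of $B^2$. On $\Sigma_1\times[\delta/2,\delta]$, interpolate linearly between $g$ and $v=\Pi_\cN\circ g$. Here \eqref{fBconvex} bounds the potential by $Cf(g)$, and $|g-\Pi_\cN g|\le C\dist(g,\cN)$ bounds the $\pa_\rho$ term by $C\delta^{-2}\dist^2\le C\ol{\va}^{-2}f(g)$, since $\ol{\va}<\delta$. Then extend homogeneously, cell by cell, over the higher-dimensional faces of the prism skeleton. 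The standard Luckhaus bookkeeping then gives the bound $C\delta E$ of \eqref{Wvaest1}: homogeneous extension into a $(k+1)$-cell of size $\delta$ costs $C\delta$ times the energy on its boundary.

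Because $\pa B_r^{n-2}$ is a union of grid faces, the restriction of $\vp$ to $(B_\delta^2\backslash B_{\delta/2}^2)\times\pa B_r^{n-2}$ is exactly the same construction performed on the lower-dimensional boundary complex. It therefore satisfies $E\le C\delta\cdot C\delta^{-1}E=CE$, which is \eqref{WpluesminusC11}, by the choice of $r$ in Step 1. On $(B_1^2\backslash B_\delta^2)\times B_r^{n-2}$, where no estimate is required, set $\vp(\rho,\theta,z):=g(\delta,\theta,z)$, so that $\vp$ lies in $H^1$ of the whole stated domain.

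\textbf{Main obstacle.} The delicate point is Step 2: obtaining uniform closeness to $\cN$ on the $1$-skeleton and topological triviality on every $2$-cell with constants independent of $\delta$. This relies on the scale-invariant form $\delta^{n-3}\eta_1\log(\delta/\ol{\va})$ of \eqref{GvaboundaryH10} and on the restriction $\ol{\va}<\wh{\va}_1\delta$. I would first fix $\eta_1<c_0/(2C)$, with $C$ the constant from Lemma \ref{LowerBound} and the skeleton averaging, and then choose $\wh{\va}_1$ to absorb the additive constants.
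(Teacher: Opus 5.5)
Your proposal is correct and follows essentially the paper's argument. The paper rescales by $\delta$ to $\Ga_{1,\delta^{-1}}$ and uses a unit-mesh grid there, which is your mesh-$\delta$ grid in the original variables; it chooses $L=r/\delta$ by Fubini and then runs the same scheme (projection on the $1$-skeleton, null-homotopy of $2$-cells via Lemma \ref{LowerBound}, extension, cell-by-cell interpolation), reading off \eqref{WpluesminusC11} from the $(n-2)$-skeleton. The one minor difference is how you get uniform closeness to $\cN$ on the $1$-skeleton: you use the elementary bound $|(d^2)'|\le 2d|g'|$ with Young's inequality, valid since $d^2\le Cf(g)$ on $\{|y|\le M\}$, whereas the paper uses $G(d)=\int_0^d\psi^{1/6}$ with a $W^{1,3/2}$ embedding and an averaging argument; both suffice.
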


\subsection{Proof of Lemma \ref{Luckhauslemma}}

For simplicity, set $ h:=h(\ol{\va}) $. Throughout the proof, we choose parameters $ \eta_0,\ol{\va}_0\in(0,1) $ as needed and keep the same notation. We divide the proof into several steps.

\smallskip
\noindent\textbf{Step 1: A ``good" partition of $ \pa B_1 $.} As in the proof of \cite[Lemma 1]{Luc88}, we establish a partition of $ \pa B_1 $ as
\be
\partial B_1=\bigsqcup_{j=0}^{n-1} \cR_j, \quad \cR_j=\bigsqcup_{i=1}^{k_j} c_{ij},\label{Rjkjdefuse}
\ee
where each~$c_{ij}$ is a $j$-cell ---  that is, an open set homeomorphic to the $j$-dimensional ball~$B^j$ --- and $ \bigsqcup $ denotes a disjoint union. Moreover, there exists $C_0=C_0(n) > 0$ such that the following properties hold.
\begin{itemize}
\item The closure of each $j$-cell $\overline{c}_{ij}$ is bi-Lipschitz equivalent to $\overline{B}_h^j$: there exists an invertible map $\phi_{ij}:\overline{c}_{ij}\to \overline{B}_h^j $ satisfying
\be
\|\na\phi_{ij}\|_{L^{\ift}(c_{ij})}+\|\na(\phi_{ij}^{-1})\|_{L^{\ift}(B_h^j)}\leq C_0.
\label{phiijestimate}
\ee
\item For any $ j\in\Z\cap[0,n-2] $ and $ i\in\Z\cap[1,k_j] $,
\be
\#\{\ell\in\Z\cap[1,k_{j+1}]:c_{ij}\subset\pa c_{\ell,j+1}\}\leq C_0.\label{ellnumber}
\ee
\item 
For any $j\in\Z\cap [0, n-1]$,
\be
\begin{aligned}
\int_{\cR_j}|\na_{\cR_j}u_{\ol{\va}}|^2\ud\HH^j&\leq C_0h^{j+1-n}\int_{\pa B_1}|\na_{\pa B_1}u_{\ol{\va}}|^2\ud\HH^{n-1},\\
\int_{\cR_j}f(u_{\ol{\va}})\ud\HH^j&\leq C_0h^{j+1-n}\int_{\pa B_1}f(u_{\ol{\va}})\ud\HH^{n-1},
\end{aligned}\label{uvafvaleq}
\ee
and in particular,
\be
E_{\ol{\va}}(u_{\ol{\va}},\cR_j)\leq C_0h^{j+1-n}E_{\ol{\va}}(u_{\ol{\va}},\pa B_1).\label{Evagrids}
\ee
\end{itemize}
A partition with these properties can be constructed by mapping $\partial B_1$ into the boundary of a cube $\partial [0,1]^n$, decomposing each boundary face of $\partial [0,1]^n$ into, say, $(\lfloor h^{-1}\rfloor)^n$ cubes of equal size, and then mapping this decomposition back to the sphere. By applying a suitable rotation and using Fubini's theorem, we can ensure that the resulting decomposition satisfies~\eqref{uvafvaleq}. See e.g. \cite[Lemma~3.7]{BSV25} for details. This decomposition of $ \pa B_1 $ induces a partition of $ B_1\backslash B_{1-h} $ by
\be
B_1\backslash B_{1-h}=\bigsqcup_{j=0}^{n-1}\widehat{\cR}_j,\quad \widehat{\cR}_j=\bigsqcup_{i=1}^{k_j}\wh{c}_{ij},\quad\wh{c}_{ij}=\left\{x \in B_1\backslash B_{1-h}:\frac{x}{|x|}\in c_{ij}\right\}.\label{induceddecomposition}
\ee

\smallskip
\noindent\textbf{Step 2: Construction of $ v_{\ol{\va}} $ on $ \cR_1 $.}
By \ref{A1} and \eqref{A31}, there exists $ \psi\in C^1([0,+\ift),[0,+\ift)) $ such that
\be
\left\{\begin{aligned}
&\psi(s)=\beta s^2&\text{ if }&s\in[0,\delta_0),\\
&0<\psi(s)\leq C&\text{ if }&s\in[\delta_0,+\ift),\\
&\psi(\dist(y,\cN))\leq f(y)&\text{ if }&y\in\R^m,
\end{aligned}\right.\label{propertypsi}
\ee
where $ \beta,C,\delta_0>0 $ depend only on $ f $ and $ \cN $. Define
\be
G(t):=\int_0^t\psi^{\f{1}{6}}(s)\ud s\quad\text{and}\quad d_{\ol{\va}}:=\dist(u_{\ol{\va}},\cN). \label{Gdefinition}
\ee
Note that $ |\na_{\cR_j}d_{\ol{\va}}|\leq|\na_{\cR_j}u_{\ol{\va}}| $ for any $ j\in\Z\cap[0,n-1] $. Since $ u_{\ol{\va}}\in H^1(\pa B_1,\R^m) $, we have $ d_{\ol{\va}}\in H^1(\pa B_1) $. Moreover, \eqref{propertypsi} gives $ \psi(d_{\ol{\va}})\leq f(u_{\ol{\va}}) $. Applying \eqref{log1} and Young's inequality $ a+b\geq Ca^{\f{3}{4}}b^{\f{1}{4}} $, we obtain
\begin{align*}
C_0|\log\ol{\va}|&\geq h^{n-2}\int_{\cR_1}\(\f{1}{2}|\na d_{\ol{\va}}|^2+\f{1}{\ol{\va}^2}\psi(d_{\ol{\va}})\)\ud\HH^1\\
&\geq C\ol{\va}^{-\f{1}{2}}h^{n-2}\int_{\cR_1}|\na d_{\ol{\va}}|^{\f{3}{2}}\psi^{\f{1}{4}}(d_{\ol{\va}})\ud\HH^1\\
&=C\ol{\va}^{-\f{1}{2}}h^{n-2}\int_{\cR_1}|\na G(d_{\ol{\va}})|^{\f{3}{2}}\ud\HH^1.
\end{align*}
The Sobolev embedding theorem then gives
\be
\(\osc_{c_{i1}}G(d_{\ol{\va}})\)^{\f{3}{2}}\leq Ch^{\f{1}{2}}\int_{c_{i1}}|\na G(d_{\ol{\va}})|^{\f{3}{2}}\ud\HH^1\leq C\ol{\va}^{\f{1}{2}}h^{\f{5}{2}-n}|\log\ol{\va}|.\label{Gosc}
\ee
Recalling the definition of $ h $ in \eqref{hdef}, we have
\[
\ol{\va}^{\f{1}{2}}h^{\f{5}{2}-n}|\log\ol{\va}|\leq \ol{\va}^{\f{1}{2}}|\log\ol{\va}|^{\f{2n-5}{2n-6}}\cdot|\log\ol{\va}|\to 0^+
\]
as $ \ol{\va}\to 0^+ $. Combined with \eqref{propertypsi}, \eqref{Gdefinition}, and \eqref{Gosc}, this implies
\be
\lim_{\ol{\va}\to 0^+}\(\osc_{\cR_1}d_{\ol{\va}}\)=0.\label{oscdva}
\ee
We claim that
\be
\lim_{\ol{\va}\to 0^+}\(\sup_{i\in\Z\cap[1,k_1]}\dashint_{c_{i1}}d_{\ol{\va}}\)\ud\HH^1=0.\label{claimaverage}
\ee
By \eqref{Boun}, there exists $ \lda_0=\lda_0(M,\cN)>0 $ such that
\[
\|d_{\ol{\va}}\|_{L^{\ift}(\pa B_1)}\leq\lda_0.
\]
For $ \lda\in(0,\lda_0) $, set
\[
\psi_*(\lda):=\inf_{\lda\leq s\leq\lda_0}\psi(s)>0.
\]
Then, for any $ i\in[1,k_1] $,
\be
\begin{aligned}
0\leq\dashint_{c_{i1}}d_{\ol{\va}}\ud\HH^1&=\f{1}{\HH^1(c_{i1})}\int_{\{d_{\ol{\va}}\leq\lda\}\cap c_{i1}}d_{\ol{\va}}\ud\HH^1+\f{1}{\HH^1(c_{i1})}\int_{\{d_{\ol{\va}}\geq\lda\}\cap c_{i1}}d_{\ol{\va}}\ud\HH^1\\
&\leq\f{\HH^1(\{d_{\ol{\va}}\leq\lda\}\cap c_{i1})}{\HH^1(c_{i1})}\lda+\f{\HH^1(\{d_{\ol{\va}}\geq\lda\}\cap c_{i1})}{\HH^1(c_{i1})}\lda_0\\
&\leq\lda+\f{\lda_0}{\psi_*(\lda)}\dashint_{\{d_{\ol{\va}}\geq\lda\}\cap c_{i1}}\psi(d_{\ol{\va}})\ud\HH^1.
\end{aligned}\label{dvaaverage}
\ee
From \eqref{log1}, \eqref{uvafvaleq}, and \eqref{propertypsi},
\begin{align*}
\dashint_{c_{i1}}\psi(d_{\ol{\va}})\ud\HH^1&\leq Ch^{-1}\int_{\cR_1}f(u_{\ol{\va}})\ud\HH^1\leq C\ol{\va}^2h^{1-n}E_{\ol{\va}}(u_{\ol{\va}},\pa B_1)\\
&\leq Ch^{1-n}\ol{\va}^2|\log\ol{\va}|\leq C\ol{\va}^2|\log\ol{\va}|^{\f{n-1}{n-3}}\cdot|\log\ol{\va}|\to 0^+
\end{align*}
as $ \ol{\va}\to 0^+ $. For any $ \delta>0 $, choose $ \lda=\f{\delta}{2} $ in \eqref{dvaaverage}. Then for any sufficiently small $ \ol{\va}\in(0,1) $,
\[
0\leq\dashint_{c_{i1}}d_{\ol{\va}}\ud\HH^1\leq\delta.
\]
establishing \eqref{claimaverage}. Together with \eqref{oscdva}, this yields
\be
\lim_{\ol{\va}\to 0^+}\(\sup_{\cR_1}d_{\ol{\va}}\)=0.\label{dvalimit0}
\ee
In particular, there exists $ \ol{\va}_0=\ol{\va}_0(f,M,\cN,n)\in(0,1) $ such that for any $ \ol{\va}\in(0,\ol{\va}_0) $, the nearest-point projection $ \Pi_{\cN}\circ u_{\ol{\va}} $ given by \eqref{Nearestpointprojection} is well-defined on $ u_{\ol{\va}}(\cR_1) $. Set $ v_{\ol{\va}}:=\Pi_{\cN}\circ u_{\ol{\va}} $ on $ \cR_1 $. In summary, for any $ x\in \cR_1 $,
\be
v_{\ol{\va}}(x)=\Pi_{\cN}\circ u_{\ol{\va}}(x),\quad|v_{\ol{\va}}(x)-u_{\ol{\va}}(x)|<\delta_f,\label{leqdeltaf}
\ee
where $ \delta_f>0 $ is given by \eqref{A31}.

\smallskip
\noindent\textbf{Step 3: Construction of $ v_{\ol{\va}} $ on $ \pa B_1 $.} We construct $ v_{\ol{\va}} $ on $ \cR_j $ for $ j\in\Z\cap[2,n-1] $ inductively, establishing
\be
\int_{\cR_j}|\na_{\cR_j}v_{\ol{\va}}|^2\ud\HH^j\leq Ch^{j+1-n}E_{\ol{\va}}(u_{\ol{\va}},\pa B_1),\label{inductionvolva}
\ee
where $ C=C(f,M,\cN,n)>0 $. Taking $j = n-1$ gives \eqref{VboundL} and completes the construction.

Let $ j=2 $. We claim that there exists $ \eta_0=\eta_0(f,M,\cN,n)\in(0,1) $ such that, under the assumptions \eqref{log1} and \eqref{hdef}, the restriction $ v_{\ol{\va}}|_{\pa c_{i2}} $ is null-homotopic for any $ i\in\Z\cap[1,k_2] $. By \eqref{uvafvaleq} and the Sobolev embedding theorem, $ v_{\ol{\va}}|_{\cR_1}\in C^0(\cR_1,\cN) $, so the homotopy class of $ v_{\ol{\va}}|_{\pa c_{i2}} $ is well-defined. Suppose that the claim fails; then there exists a $2$-cell $c_{i2} =: c$ such that $ u|_{\pa c} $ is not null-homotopic. From \eqref{hdef}, for $ \ol{\va}\in(0,\ol{\va}_0) $ with $ \ol{\va}_0 $ sufficiently small,
\be
h>|\log\ol{\va}|^{-\f{1}{n-3}}>\ol{\va}^{\f{1}{2}}>\ol{\va}.\label{hvageqva1}
\ee
Lemma \ref{LowerBound} then gives
\begin{align*}
E_{\ol{\va}}(u_{\ol{\va}},c)+hE_{\ol{\va}}(u_{\ol{\va}},\pa c)&\geq C^{-1}|[v_{\ol{\va}}|_{\pa c}]_{\cN}|_{*}\log\f{h}{\ol{\va}}-C\stackrel{\eqref{c0geq}}{\geq}C_1|\log\ol{\va}|-C_2,
\end{align*}
where $ C_1,C_2>0 $ depends only on $ f,M,\cN,n $. By \eqref{hdef} and \eqref{Evagrids},
\[
C_1|\log\ol{\va}|-C_2\leq Ch^{3-n}E_{\ol{\va}}(u_{\ol{\va}},\pa B_1)\leq C\eta_0|\log\ol{\va}|.
\]
Choosing $\eta_0\in(0,1)$ to be sufficiently small leads to a contradiction. Since $ v_{\ol{\va}}|_{\pa c_{i2}} $ is homotopically trivial for any $ i\in[1,k_2] $, Lemma \ref{ExtensionLemma1} extends $ v_{\ol{\va}}|_{\pa c_{i2}} $ to $c_{i2}$ with
\[
\int_{c_{i2}}|\na_{c_{i2}}v_{\ol{\va}}|^2\ud\HH^2\leq Ch\int_{\pa c_{i2}}|\na_{\pa c_{i2}}v_{\ol{\va}}|^2\ud\HH^1.
\]
Summing over $i$ and applying \eqref{ellnumber} and \eqref{uvafvaleq},
\begin{align*}
\int_{\cR_2}|\na_{\cR_2}v_{\ol{\va}}|^2\ud\HH^2&\leq\sum_{i=1}^{k_2}\int_{c_{i2}}|\na_{c_{i2}}v_{\ol{\va}}|^2\ud\HH^2\leq Ch\(\sum_{i=1}^{k_2}\int_{\pa c_{i2}}|\na_{\pa c_{i2}}v_{\ol{\va}}|^2\ud\HH^1\)\\
&\leq Ch\int_{\cR_1}|\na_{\pa c_{i2}}v_{\ol{\va}}|^2\ud\HH^1\leq Ch^{3-n}E_{\ol{\va}}(u_{\ol{\va}},\pa B_1),
\end{align*}
confirming \eqref{inductionvolva} for $j = 2$.

Now suppose $ v_{\ol{\va}} $ is defined on $ \cR_{j-1} $ for some $ j\in\Z\cap[3,n-1] $ and satisfies
\be
\int_{\cR_{j-1}}|\na_{\cR_{j-1}}v_{\ol{\va}}|^2\ud\HH^{j-1}\leq Ch^{j-n}E_{\ol{\va}}(u_{\ol{\va}},\pa B_1).\label{inductionHypo1}
\ee
For each $ c_{ij} $, extend $ v_{\ol{\va}} $ from $ \pa c_{ij}$ by homogeneous extension:
\[
v_{\ol{\va}}(x):=v_{\ol{\va}}\circ(\phi_{ij}^{-1})\circ\(\f{h\phi_{ij}(x)}{|\phi_{ij}(x)|}\),\quad x\in c_{ij}\backslash\{\phi_{ij}^{-1}(0^j)\}
\]
where $ \phi_{ij} $ is as in \eqref{phiijestimate}. Direct computation gives
\begin{align*}
\int_{\cR_j}|\na_{\cR_j}v_{\ol{\va}}|^2\ud\HH^j&\leq\sum_{i=1}^{k_j}\int_{c_{ij}}|\na_{c_{ij}}v_{\ol{\va}}|^2\ud\HH^j\leq Ch\(\sum_{i=1}^{k_j}\int_{\pa c_{ij}}|\na_{\pa c_{ij}}v_{\ol{\va}}|^2\ud\HH^{j-1}\)\\
&\stackrel{\eqref{ellnumber}}{\leq} Ch\int_{\cR_{j-1}}|\na_{\cR_{j-1}}v_{\ol{\va}}|^2\ud\HH^{j-1}\stackrel{\eqref{inductionHypo1}}{\leq} Ch^{j+1-n}E_{\ol{\va}}(u_{\ol{\va}},\pa B_1),
\end{align*}
completing the construction of $ v_{\ol{\va}} $ on $ \pa B_1 $.

\smallskip\noindent\textbf{Step 4: Construction of $ \vp_{\ol{\va}} $.} Using the decomposition \eqref{induceddecomposition} of $ B_1\backslash B_{1-h} $, we construct $ \vp_{\ol{\va}} $ on $ \wh{\cR}_j $ for $ j\in\Z\cap[1,n-1] $ inductively, showing that
\be
E_{\ol{\va}}(\vp_{\ol{\va}},\wh{\cR}_j)\leq Ch^{j+2-n}E_{\ol{\va}}(u_{\ol{\va}},\pa B_1).\label{inductionEstimate11}
\ee
Taking $j = n-1$ gives \eqref{WboundL} and completes the proof.

For $j = 1$ and any $i \in [1, k_1]$ where $ k_1 $ is as in \eqref{Rjkjdefuse}, define
\[
\vp_{\ol{\va}}(x):=\(1-\f{1-|x|}{h}\)u_{\ol{\va}}\(\f{x}{|x|}\)+\f{1-|x|}{h}v_{\ol{\va}}\(\f{x}{|x|}\),\quad x\in\wh{\cR}_1.
\]
This also defines $ \vp_{\ol{\va}} $ on $ \wh{\cR}_0 $. By \eqref{fBconvex} with $t=1-\f{1-|x|}{h}$ and \eqref{leqdeltaf},
\begin{align*}
f(\vp_{\ol{\va}})&\leq f\!\(tu_{\ol{\va}}\(\f{x}{|x|}\)+(1-t)v_{\ol{\va}}\(\f{x}{|x|}\)\)\\
&\leq C\(\f{1-h-|x|}{h}\)^2f\!\(u_{\ol{\va}}\(\f{x}{|x|}\)\).
\end{align*}
Using \eqref{uvafvaleq} and \eqref{hvageqva1}, for small $ \ol{\va}\in(0,\ol{\va}_0) $ satisfying \eqref{hvageqva1}, we have
\be
\ol{\va}^{-2}\int_{\wh{\cR}_1}f(\vp_{\ol{\va}})\ud\HH^2\leq C\ol{\va}^{-2}h\int_{\cR_1}f(u_{\ol{\va}})\ud\HH^1\leq Ch^{3-n}E_{\ol{\va}}(u_{\ol{\va}},\pa B_1).\label{fvpfresults}
\ee
In follows from \eqref{uvafvaleq}, \eqref{leqdeltaf}, and \eqref{hvageqva1} that
\begin{align*}
\int_{\wh{\cR}_1}|\na_{\wh{\cR}_1}\vp_{\ol{\va}}|^2\ud\HH^2&\leq Ch^{-1}\int_{\cR_1}|u_{\ol{\va}}-v_{\ol{\va}}|^2\ud\HH^1\leq Ch^{-1}\int_{\cR_1}f(u_{\ol{\va}})\ud\HH^1\leq Ch^{3-n}E_{\ol{\va}}(u_{\ol{\va}},\pa B_1).
\end{align*}
Together with \eqref{fvpfresults}, this implies \eqref{inductionEstimate11} for $j = 1$.

Now suppose $ \vp_{\ol{\va}} $ is constructed on $ \wh{\cR}_{j-1} $ for some $ j\in\Z\cap[2,n-1] $ with
\be
E_{\ol{\va}}(\vp_{\ol{\va}},\wh{\cR}_{j-1})\leq Ch^{j+1-n}E_{\ol{\va}}(u_{\ol{\va}},\pa B_1).\label{inductionHypo11}
\ee
By \eqref{phiijestimate}, for each $ i\in\Z\cap[1,k_j] $, there exists a bi-Lipschitz map $ \Phi_{ij}:\ol{\wh{c}}_{ij}\to\ol{B}_h^{j+1} $ such that
\[
\|\na\Phi_{ij}\|_{L^{\ift}(\wh{c}_{ij})}+\|\na(\Phi_{ij}^{-1})\|_{L^{\ift}(B_h^{j+1})}\leq C(n).
\]
Define $ \vp_{\ol{\va}} $ on $ \wh{c}_{ij} $ by homogeneous extension from $ \pa\wh{c}_{ij} $:
\[
\vp_{\ol{\va}}(x):=\vp_{\ol{\va}}\circ(\Phi_{ij}^{-1})\circ\(\f{h\Phi_{ij}(x)}{|\Phi_{ij}(x)|}\)
\]
for any $ x\in\wh{c}_{ij}\backslash\{\Phi_{ij}^{-1}(0^{j+1})\} $. Direct computation and \eqref{inductionHypo11} give
\begin{align*}
E_{\ol{\va}}(\vp_{\ol{\va}},\wh{\cR}_j)&=\sum_{i=1}^{k_j}E_{\ol{\va}}(\vp_{\ol{\va}},\wh{c}_{ij})\leq Ch\(\sum_{i=1}^{k_j}E_{\ol{\va}}(\vp_{\ol{\va}},\pa\wh{c}_{ij})\)\\
&\leq Ch(E_{\ol{\va}}(\vp_{\ol{\va}},\wh{\cR}_{j-1})+E_{\ol{\va}}(u_{\ol{\va}},\cR_j)+E_{\ol{\va}}(v_{\ol{\va}},\cR_j))\stackrel{\eqref{inductionvolva}}{\leq} Ch^{j+2-n}E_{\ol{\va}}(u_{\ol{\va}},\pa B_1),
\end{align*}
This implies \eqref{inductionHypo11} for $ j $ and completes the inductive argument.

\qed

\subsection{Proof of Lemma \ref{Luckhauslemma1}} Set $\wh{\va} := \frac{\ol{\va}}{\delta}$ and $g_{\wh{\va}}(x) := g_{\delta,\ol{\va}}(\delta x)$. By \eqref{fGboundbound0} and \eqref{GvaboundaryH10},
\be
\begin{gathered}
\|g_{\wh{\va}}\|_{L^\ift(\Ga_{1,\delta^{-1}},\R^m)} \leq M, \\
E_{\wh{\va}}(g_{\wh{\va}}, \Ga_{1,\delta^{-1}}) \leq \eta_1 |\log \wh{\va}|,
\end{gathered} \label{scalingproperties}
\ee
where $\eta_1 > 0$ is a small constant to be chosen. By Fubini's theorem, there exists $L \in (\delta^{-1}-1, \delta^{-1}-\frac{1}{2})$ such that
\be
E_{\wh{\va}}(g_{\wh{\va}}, \pa B_1^2 \times \pa B_L^{n-2})
\leq 4 E_{\wh{\va}}(g_{\wh{\va}}, \Ga_{1,\delta^{-1}}). \label{chooseL}
\ee
We show that there exist
\[
v_{\wh{\va}} \in H^1(\pa B_1^2 \times B_L^{n-2}, \cN),\quad
\vp_{\wh{\va}} \in H^1((B_1^2 \backslash B_{\f{1}{2}}^2) \times B_L^{n-2}, \R^m),
\]
and a constant $C > 0$ depending only on $f, M, \cN, n$, such that the following properties hold.

\begin{enumerate}[label=$(\theenumi)$]
\item For $\HH^{n-1}$-a.e. $x = (y,z) \in \pa B_1^2 \times B_L^{n-2}$,
\[
\vp_{\wh{\va}}(x) = g_{\wh{\va}}(x)
\quad \text{and} \quad
\vp_{\wh{\va}}\left(\frac{y}{2}, z\right) = v_{\wh{\va}}(x).
\]

\item $v_{\wh{\va}}$ and $\vp_{\wh{\va}}$ satisfy
\begin{align}
E_{\wh{\va}}(v_{\wh{\va}}, \pa B_{\f{1}{2}}^2 \times B_L^{n-2})
&\leq C E_{\wh{\va}}(g_{\wh{\va}}, \Ga_{1,\delta^{-1}}), \label{Vvaescylinder} \\
E_{\wh{\va}}(\vp_{\wh{\va}}, (B_1^2 \backslash B_{\f{1}{2}}^2) \times B_L^{n-2})
&\leq C E_{\wh{\va}}(g_{\wh{\va}}, \Ga_{1,\delta^{-1}}). \label{Wvaestcylinder}
\end{align}

\item The restriction $\vp_{\wh{\va}}^* := \vp_{\wh{\va}}|_{(B_1^2 \backslash B_{\f{1}{2}}^2) \times \pa B_L^{n-2}}$ belongs to
$H^1((B_1^2 \backslash B_{\f{1}{2}}^2) \times \pa B_L^{n-2}, \R^m)$ and satisfies
\be
E_{\wh{\va}}(\vp_{\wh{\va}}^*, (B_1^2 \backslash B_{\f{1}{2}}^2) \times \pa B_L^{n-2})
\leq C E_{\wh{\va}}(g_{\wh{\va}}, \Ga_{1,\delta^{-1}}). \label{WpluesminusCcylinder}
\ee
\end{enumerate}

Given $v_{\wh{\va}}$ and $\vp_{\wh{\va}}$ as above, set $v_{\delta,\ol{\va}}(x) := v_{\wh{\va}}(\f{x}{\delta})$ and
$\vp_{\delta,\ol{\va}}(x) := \vp_{\wh{\va}}(\f{x}{\delta})$ with $r := \delta L$. Then \eqref{Vvaes1}, \eqref{Wvaest1}, and \eqref{WpluesminusC11} follow from \eqref{Vvaescylinder}, \eqref{WpluesminusCcylinder}, and \eqref{Wvaestcylinder}, respectively.

The proof proceeds in four steps. Throughout the proof, the parameters $\eta_1, \wh{\va}_1 \in (0,1)$ can be adjusted finitely many times without changing notation.

\smallskip
\noindent\textbf{Step 1: A ``good" partition of $ \ol{\Ga}_{1,L} $.} Let $Q := (-L, L)^{n-2}$ and $\Phi : \ol{B}_L^{n-2} \to \ol{Q}$ be a bi-Lipschitz map satisfying $\Phi(0^{n-2}) = 0^{n-2}$, with $\|\na\Phi\|_{L^\ift} + \|\na(\Phi^{-1})\|_{L^\ift}$ bounded by a constant depending only on $n$. We first construct a family of cell decompositions of $\partial B_1^2\times Q$.

For $\theta \in (0,\pi)$, define $A(\theta) := \{a_j(\theta)\}_{j=1}^2 \subset \pa B_1^2$, where
\[
a_j(\theta) := (\cos(j\pi + \theta), \sin(j\pi + \theta)).
\]
The two points $a_1(\theta)$ and $a_2(\theta)$ divide $\partial B_1^2$
into two circular arcs. Thus, they define a cell decomposition of
$\partial B_1^2$, with $0$-cells given by these two points and $1$-cells
given by the two relatively open arcs between them. We write this
decomposition as
\be
\pa B_1^2 = \bigsqcup_{j=0}^1 \wt{\cR}_j^0(\theta), \quad
\wt{\cR}_j^0 = \bigsqcup_{i=1}^{k_j^0} \wt{c}_{ij}^0(\theta). \label{partition0}
\ee
Next, we define a family of grid decompositions of $Q$. Put
\[
N_L:=4\lfloor L\rfloor,\quad b_q:=-L+\frac{qL}{2\lfloor L\rfloor},\quad q\in\Z\cap[0,N_L].
\]
Thus $b_0=-L$ and $b_{N_L}=L$. We keep these two boundary grid points
fixed and allow only the interior grid points to move. More precisely, set
\[
T:=\left(-\frac1{10},\frac1{10}\right)^{(n-2)\times(N_L-1)}.
\]
For $\tau=(\tau_{\alpha q})\in T$, where
$\alpha\in\Z\cap[1,n-2]$ and $q\in\Z\cap[1,N_L-1]$, define
\[
B_\alpha(\tau):=\{b_0,b_{N_L}\}\cup\{b_q+\tau_{\alpha q}:\ q=1,\ldots,N_L-1\}\subset[-L,L],
\]
and
\[
\mathcal B(\tau):=B_1(\tau)\times...\times B_{n-2}(\tau)\subset\overline Q.
\]
The set $\mathcal B(\tau)$ is the set of vertices of a rectangular grid
in $Q$. The boundary vertices remain on $\partial Q$, while each interior grid hyperplane has a small independent displacement. Since the displacement is smaller than the grid size, this still gives a cell decomposition of $\ol{Q}$. We denote its $j$-skeleton by
\be
\ol{Q} = \bigsqcup_{j=0}^{n-2} \wt{\cR}_j^1(\tau), \quad
\wt{\cR}_j^1 = \bigsqcup_{i=1}^{k_j^1} \wt{c}_{ij}^1(\tau),\label{partition1}
\ee
where the cells $\widetilde c_{ij}^1(\tau)$ are relatively open
$j$-dimensional coordinate rectangles. Define $\wt{g}_{\wh{\va}} \in H^1(\pa B_1^2 \times Q, \R^m)$ by
\[
\wt{g}_{\wh{\va}}(y, z) := g_{\wh{\va}}(y, \Phi^{-1}(z)), \quad y \in \pa B_1^2,\,\,z \in Q.
\]
By \eqref{chooseL} and the bi-Lipschitz bound for $\Phi$,
\be
E_{\wh{\va}}(\wt{g}_{\wh{\va}}, \pa B_1^2 \times Q)
\leq C(n) E_{\wh{\va}}(g_{\wh{\va}}, \Ga_{1,\delta^{-1}}). \label{QchooseL}
\ee

The decompositions \eqref{partition0} and \eqref{partition1} of $\partial B_1^2$ and $Q$ induce, by taking products, a cell decomposition of $\partial B_1^2\times Q$. We write
\[
\pa B_1^2 \times \ol{Q}
= \bigsqcup_{j=0}^{n-1} \wt{\cR}_j(\theta,\tau), \quad
\wt{\cR}_j(\theta,\tau) := \bigsqcup_{i=1}^{k_j} \wt{c}_{ij}(\theta,\tau).
\]
For each $j \in \Z \cap [0, n-1]$ and $i \in \Z \cap [1, k_j]$, there exist $j_0 \in \{0,1\}$, $i_0 \in \Z \cap [1, k_{j_0}^0]$, $j_1 \in \Z \cap [0, n-2]$, and $i_1 \in \Z \cap [1, k_{j_1}^1]$
such that
\[
\wt{c}_{ij}(\theta,\tau) = \wt{c}_{i_0 j_0}^0(\theta) \times \wt{c}_{i_1 j_1}^1(\tau).
\]
We now choose the parameters in the decompositions. Since only finitely many types of skeletons occur, Fubini's theorem applies to the parameters
$(\theta,\tau)\in(0,\pi)\times T$ gives a pair $(\theta_*,\tau_*)\in(0,\pi)\times T$ such that, for every
$j\in\Z\cap[1,n-1]$,
\[
E_{\widehat\varepsilon}(\widetilde g_{\widehat\varepsilon},\widetilde\cR_j(\theta_*,\tau_*))\leq C(n)
E_{\widehat\varepsilon}(\widetilde g_{\widehat\varepsilon},\partial B_1^2\times Q),
\]
and
\[
\int_{\widetilde\cR_j(\theta_*,\tau_*)}f(\widetilde g_{\widehat\varepsilon})\ud\mathcal H^j\leq C(n)\int_{\partial B_1^2\times Q}f(\widetilde g_{\widehat\varepsilon})\ud\mathcal H^{n-1}.
\]
From now on, we will fix this pair and denote
\[
\widetilde\cR_j^*:=\widetilde\cR_j(\theta_*,\tau_*),\quad\widetilde c_{ij}^*
:=\widetilde c_{ij}(\theta_*,\tau_*).
\]
Pulling the decomposition back by $\Phi$, we obtain a cell decomposition of $\ol{\Gamma}_{1,L} $. Namely, for $j=\Z\cap[0,n-1]$ and $i\in\Z\cap[1,k_j]$, define
\be
c_{ij}:=(\operatorname{Id}_{\partial B_1^2}\times\Phi^{-1})(\widetilde c_{ij}^*),\quad\cR_j:=\bigsqcup_{i=1}^{k_j}c_{ij}.\label{cijkj}
\ee
Then
\[
\ol{\Gamma}_{1,L}=\bigsqcup_{j=0}^{n-1}\cR_j.
\]
Moreover, there exists $C_0=C_0(n)>0$ such that the following properties hold.
\begin{itemize}
\item Each $j$-cell $c_{ij}$ is bi-Lipschitz equivalent to $B_1^j$: there exists $\phi_{ij} :\ol{c}_{ij} \to\ol{B}_1^j$ satisfying
\be
\|\na\phi_{ij}\|_{L^\ift(c_{ij})} + \|\na(\phi_{ij}^{-1})\|_{L^\ift(B_1^j)} \leq C_0. \label{phiijestimate1}
\ee

\item For any $j \in \Z \cap [0, n-2]$ and $i \in \Z \cap [1, k_j]$,
\be
\#\{\ell \in \Z \cap [1, k_{j+1}] : c_{ij} \subset \pa c_{\ell,j+1}\} \leq C_0. \label{ellnumber1}
\ee

\item For any $j \in \Z \cap [0, n-1]$,
\begin{align}
E_{\wh{\va}}(g_{\wh{\va}}, \cR_j)
&\leq C_0 E_{\wh{\va}}(g_{\wh{\va}}, \pa B_1^2 \times B_L^{n-2}), \label{FubiniEgva} \\
\int_{\wt{\cR}_j} f(g_{\wh{\va}}) \ud\HH^j
&\leq C_0 \int_{\pa B_1^2 \times B_L^{n-2}} f(g_{\wh{\va}}) \ud\HH^{n-1}. \label{Fubinifgva}
\end{align}
\end{itemize}
This decomposition of $\pa B_1^2 \times B_L^{n-2}$ induces a partition of
$(B_1^2 \backslash B_{\f{1}{2}}^2) \times B_L^{n-2}$ by
\be
\begin{aligned}
&(B_1^2 \backslash B_{\f{1}{2}}^2) \times B_L^{n-2}
= \bigsqcup_{j=0}^{n-1} \widehat{\cR}_j, \quad
\widehat{\cR}_j = \bigsqcup_{i=1}^{k_j} \wh{c}_{ij}, \\
&\wh{c}_{ij} = \left\{(y,z) \in (B_1^2 \backslash B_{\f{1}{2}}^2) \times B_L^{n-2}
: \frac{y}{|y|} \in c_{ij}\right\}.
\end{aligned} \label{induceddecomposition1}
\ee
With this partition established, we carry out Steps 2--4 in close parallel with the proof of Lemma \ref{Luckhauslemma}; we present the arguments in full for completeness.

\smallskip
\noindent\textbf{Step 2: Construction of $v_{\wh{\va}}$ on $\cR_1$.} By reasoning exactly as in Step 2 of the proof of Lemma \ref{Luckhauslemma}, we obtain the analog of \eqref{dvalimit0}:
\[
 \lim_{\wh{\va}\to 0^+} \sup_{\cR_1}\dist(g_{\wh{\va}}, \cN)=0
\]
Hence, there exists $\wh{\va}_1 = \wh{\va}_1(f, M, \cN, n) \in (0,1)$ such that for any $\wh{\va} \in (0, \wh{\va}_1)$, the nearest-point projection $\Pi_{\cN}$ is well-defined on $g_{\wh{\va}}(\cR_1)$. Set $v_{\wh{\va}} := \Pi_{\cN} \circ g_{\wh{\va}}$ on $\cR_1$. Then for any $x \in \cR_1$,
\be
v_{\wh{\va}}(x) = \Pi_{\cN}(g_{\wh{\va}}(x)),
\quad |v_{\wh{\va}}(x) - g_{\wh{\va}}(x)| < \delta_f, \label{leqdeltaf1}
\ee
where $\delta_f > 0$ is given by \eqref{A31}.

\smallskip
\noindent\textbf{Step 3: Construction of $v_{\wh{\va}}$ on $\Ga_{1,L}$.}
We extend $v_{\wh{\va}}$ to $\cR_j$ for $j \in \Z \cap [2, n-1]$ by induction, showing at each stage that
\be
\int_{\cR_j} |\na_{\cR_j} v_{\wh{\va}}|^2 \ud\HH^j
\leq C E_{\wh{\va}}(g_{\wh{\va}}, \Ga_{1,\delta^{-1}}), \label{inductionvolva1}
\ee
where $C = C(f, M, \cN, n) > 0$. Taking $j = n-1$ in \eqref{inductionvolva1} gives \eqref{Vvaescylinder} and completes the construction of $v_{\wh{\va}}$.

Let $ j=2 $. We choose $\eta_1 = \eta_1(f, M, \cN, n) \in (0,1)$ small enough so that whenever the energy bound in \eqref{scalingproperties} holds, it follows from almost the same arguments in Step 3 in the proof of Lemma \ref{Luckhauslemma} that the restriction $v_{\wh{\va}}|_{\pa c_{i2}}$ is null-homotopic in $\cN$ for any $i \in \Z \cap [1, k_2]$. Then, Lemma \ref{ExtensionLemma1} extends $v_{\wh{\va}}|_{\pa c_{i2}}$ to $c_{i2}$ with
\[
\int_{c_{i2}} |\na_{c_{i2}} v_{\wh{\va}}|^2 \ud\HH^2
\leq C \int_{\pa c_{i2}} |\na_{\pa c_{i2}} v_{\wh{\va}}|^2 \ud\HH^1.
\]
Summing over $i$ and applying \eqref{ellnumber1} and \eqref{FubiniEgva}, we have
\[
\int_{\cR_2} |\na_{\cR_2} v_{\wh{\va}}|^2 \ud\HH^2
\leq C E_{\wh{\va}}(g_{\wh{\va}}, \Ga_{1,\delta^{-1}}),
\]
confirming \eqref{inductionvolva1} for $j = 2$.

Suppose $v_{\wh{\va}}$ is defined on $\cR_{j-1}$ and \eqref{inductionvolva1} holds for $j-1$. For each $c_{ij}$ with $i \in \Z \cap [1, k_j]$, extend $v_{\wh{\va}}$ by homogeneous extension from $\pa c_{ij}$:
\[
v_{\wh{\va}}(x):=v_{\wh{\va}}\circ(\phi_{ij}^{-1})\circ\(\f{\phi_{ij}(x)}{|\phi_{ij}(x)|}\),\quad x\in c_{ij}\backslash\{\phi_{ij}^{-1}(0^j)\},
\]
where $ \phi_{ij} $ is as in \eqref{phiijestimate1}. Direct computation gives
\[
\int_{\cR_j} |\na_{\cR_j} v_{\wh{\va}}|^2 \ud\HH^j
\leq C \int_{\cR_{j-1}} |\na_{\cR_{j-1}} v_{\wh{\va}}|^2 \ud\HH^{j-1}
\leq C E_{\wh{\va}}(g_{\wh{\va}}, \Ga_{1,\delta^{-1}}),
\]
where the last inequality uses the inductive hypothesis \eqref{inductionvolva1} for $j-1$. This completes the construction of $v_{\wh{\va}}$ on $\Ga_{1,L}$.

\smallskip
\noindent\textbf{Step 4: Construction of $\vp_{\wh{\va}}$.}
Recall the decomposition \eqref{induceddecomposition1} of $(B_1^2 \backslash B_{\f{1}{2}}^2) \times B_L^{n-2}$. We construct $\vp_{\wh{\va}}$ on $\wh{\cR}_j$ for $j \in \Z \cap [1, n-1]$ inductively, establishing at each stage
\be
E_{\wh{\va}}(\vp_{\wh{\va}}, \wh{\cR}_j) \leq C E_{\wh{\va}}(g_{\wh{\va}}, \Ga_{1,\delta^{-1}}), \label{inductionEstimate111}
\ee
where $C > 0$ depends only on $f, M, \cN, n$. Taking $j = n-1$ gives \eqref{Wvaestcylinder}, while $j = n-2$ gives \eqref{WpluesminusCcylinder}.

Let $j=1$. Recall that $k_1$ denotes the number of $1$-cells in $ \cR_1 $ as in \eqref{cijkj}. For any $ i\in[1,k_1] $ and $ x=(y,z)\in\wh{\cR}_1 $ such that $ \f{y}{|y|}\in\cR_1 $, set
\[
\vp_{\wh{\va}}(x):=(2|y|-1)g_{\wh{\va}}\(\f{y}{|y|},z\)+2(1-|y|)v_{\wh{\va}}\(\f{y}{|y|},z\),
\]
which also defines $\vp_{\wh{\va}}$ on $\wh{\cR}_0$. Using \eqref{leqdeltaf1} and \eqref{fBconvex}, we have
\[
f(\vp_{\wh{\va}})\leq C(1-2|y|)^2f\(g_{\wh{\va}}\(\f{y}{|y|},z\)\).
\]
It follows from \eqref{Fubinifgva} that
\be
\wh{\va}^{-2}\int_{\wh{\cR}_1}f(\vp_{\wh{\va}})\ud\HH^2\leq CE_{\wh{\va}}(g_{\wh{\va}},\Ga_{1,\delta^{-1}}).\label{fvpfresults1}
\ee
By \eqref{Fubinifgva} and \eqref{leqdeltaf1}, for $\wh{\va} \in (0, \wh{\va}_1)$ with $ \wh{\va}_1\in(0,1) $ being small,
\[
\int_{\wh{\cR}_1}|\na_{\wh{\cR}_1}\vp_{\wh{\va}}|^2\ud\HH^2\leq CE_{\wh{\va}}(g_{\wh{\va}},\Ga_{1,\delta^{-1}}).
\]
Together with \eqref{fvpfresults1}, this implies \eqref{inductionEstimate111} for $j = 1$.

Suppose $\vp_{\wh{\va}}$ is constructed on $\wh{\cR}_{j-1}$ and
\be
E_{\wh{\va}}(\vp_{\wh{\va}},\wh{\cR}_{j-1})\leq CE_{\wh{\va}}(g_{\wh{\va}},\Ga_{1,\delta^{-1}}).\label{inductionHypo111}
\ee
By \eqref{phiijestimate1}, for each $i \in \Z \cap [1, k_j]$ there exists a bi-Lipschitz map $\Phi_{ij} : \ol{\wh{c}}_{ij} \to\ol{B}_1^{j+1}$ with
\[
\|\na\Phi_{ij}\|_{L^{\ift}(\wh{c}_{ij})}+\|\na(\Phi_{ij}^{-1})\|_{L^{\ift}(B_1^{j+1})}\leq C(n).
\]
Extend $\vp_{\wh{\va}}$ to $\wh{c}_{ij}$ by homogeneous extension from $\pa\wh{c}_{ij}$:
\[
\vp_{\wh{\va}}(x):=\vp_{\wh{\va}}\circ(\Phi_{ij}^{-1})\circ\(\f{\Phi_{ij}(x)}{|\Phi_{ij}(x)|}\),\quad x\in\wh{c}_{ij}\backslash\{\Phi_{ij}^{-1}(0^{j+1})\}.
\]
Direct computation and \eqref{inductionHypo111} give
\[
E_{\wh{\va}}(\vp_{\wh{\va}}, \wh{\cR}_j)
\leq CE_{\wh{\va}}(g_{\wh{\va}}, \Ga_{1,\delta^{-1}}),
\]
establishing \eqref{inductionEstimate111} for $j$ and completing the induction.
\qed

\section{Clearing-out theory}

In this section, we establish the following clearing-out property: If the energy of a local minimizer $u_\va$ on a ball is small relative to $|\log\va|$, then the energy on the concentric half-ball is bounded independently of $\va$. More precisely, we have

\begin{prop}[Clearing-out]\label{clearingout}
Let $r > 0$ and $x \in \R^n$. Assume that $\{u_{\va}\}_{\va \in (0,1)}$ is a family of local minimizers of \eqref{GLfunctional} in $B_{2r}(x)$ such that
\[
\|u_{\va}\|_{L^\ift(B_{4r}(x))} \leq M
\]
for some $ M>0 $. Then, there exist $\eta_1, \ol{\va}_1 \in (0,1)$ and $C > 0$, depending only on $f, M, \cN$, and $n$, such that if $\va \in (0, \ol{\va}_1 r)$, $\eta \in (0, \eta_1)$, and
\[
E_{\va}(u_{\va}, B_r(x)) \leq \eta r^{n-2} \log\frac{r}{\va},
\]
then
\[
E_{\va}(u_{\va}, B_{\f{r}{2}}(x)) \leq C r^{n-2}.
\]
\end{prop}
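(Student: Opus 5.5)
By translation and scaling ($x=0$, $r=1$, $u_\va(y)\mapsto u_\va(ry)$, $\va\mapsto\va/r$), we may assume $x=0$, $r=1$ and $\va<\ol{\va}_1$. The hypothesis becomes $E_\va(u_\va,B_1)\le\eta|\log\va|$. The plan is to derive a differential inequality for $\Phi(\rho):=E_\va(u_\va,B_\rho)$ on $\rho\in[\frac12,1]$ and integrate it. By Fubini and monotonicity (Proposition~\ref{Mo}), $\int_{1/2}^1 E_\va(u_\va,\pa B_\rho)\,\ud\rho\le\eta|\log\va|$. Hence the set of good radii $\rho$ with $E_\va(u_\va,\pa B_\rho)\le 8\eta|\log\va|$ has measure at least $\frac{3}{8}$.

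For each good radius we rescale $\pa B_\rho$ to $\pa B_1$ and set $\ol{\va}=\va/\rho$. For $\eta<100^{-n}\eta_0/C$, Lemma~\ref{Luckhauslemma} then gives $v\in H^1(\pa B_\rho,\cN)$ and an interpolation $\vp$ on the shell $B_\rho\setminus B_{(1-h)\rho}$. We fill the inner ball $B_{(1-h)\rho}$ with the $\cN$-valued extension of $v$ from Lemma~\ref{ExtensionLemma2}. By local minimality, after scaling, this yields
\[
\Phi(\rho)\le C h\,\rho\, E_\va(u_\va,\pa B_\rho)+C\rho^{\frac{n-1}{2}}\Big(\rho\,E_\va(u_\va,\pa B_\rho)\Big)^{\frac12}\rho^{\frac{n-3}{2}}\cdot(\text{scaling}),
\]
that is, $\Phi\le C h\,\Phi'+C(\Phi')^{1/2}$ up to harmless powers of $\rho\in[\frac12,1]$. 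Here
\[
h=\Big(\frac{\Phi'+\eta_0}{\eta_0|\log\va|}\Big)^{\frac1{n-3}}.
\]
This gives the nonlinear inequality
\[
\Phi(\rho)\le C(\Phi'(\rho))^{\frac12}+C\,\Phi'(\rho)\Big(\frac{\Phi'(\rho)+1}{|\log\va|}\Big)^{\frac1{n-3}}+C
\]
on the good set, where we use $\Phi'=E_\va(u_\va,\pa B_\rho)$ a.e.

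The key step is integrating this inequality, and I expect it to be the main obstacle. The difficulty is that the second term is superlinear in $\Phi'$, so the right-hand side does not have the form $C(\Phi')^{1/2}$ that gives clean ODE blowup. I would split the good radii into two cases.

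If $\Phi'(\rho)\le K$ for a large fixed $K$, then the second term is bounded by $CK^{1+1/(n-3)}|\log\va|^{-1/(n-3)}\le C$. This gives $\Phi(\rho)\le C_K$, and we are done at that radius, since $\Phi$ is increasing and $\rho\ge\frac12$.

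Otherwise, on the good set we have $\Phi'\le 8\eta|\log\va|$. Then
\[
\Big(\frac{\Phi'}{|\log\va|}\Big)^{\frac1{n-3}}\le C\eta^{\frac1{n-3}},
\]
so the inequality reads $\Phi\le C(\Phi')^{1/2}+C\eta^{1/(n-3)}\Phi'+C$. I argue by contradiction and suppose $\Phi(\frac12)>C_*$ with $C_*$ large. Then at every good radius either $\Phi'\ge c\Phi^2$ or $\Phi'\ge c\eta^{-1/(n-3)}\Phi$. In the second alternative, the bound $\Phi\ge C_*$ prevents the constant from absorbing the estimate. Since the good set has measure at least $\frac38$, I combine the two alternatives through $\frac{\ud}{\ud\rho}\log\Phi$ and $\frac{\ud}{\ud\rho}(-\Phi^{-1})$, integrating each over the part of the good set where it holds. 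The first alternative contributes length at most $C/(c\,C_*)$, because $\Phi^{-1}$ decreases from at most $C_*^{-1}$. The second contributes length at most $C\eta^{1/(n-3)}\log\big(\Phi(1)/\Phi(\frac12)\big)\le C\eta^{1/(n-3)}\log|\log\va|$. This last bound is not small, which is the delicate point.

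To fix it, I would refine the splitting threshold: replace $K$ by a threshold $\Lambda$ depending on $|\log\va|$, chosen so that $\Phi'\le\Lambda$ still gives bounded error. Alternatively, I would iterate over dyadic sub-ranges of $\Phi$, using that the second alternative forces $\Phi$ to grow by a factor $e$ on length $C\eta^{1/(n-3)}$ only while $\Phi'\ge K$. Then $\Phi$ must pass from $C_*$ to at most $\eta|\log\va|$, and if the inequality is used carefully with the $(\Phi'/|\log\va|)^{1/(n-3)}$ factor kept exact rather than bounded by $\eta^{1/(n-3)}$, the total length is bounded by
\[
\int \frac{\ud\Phi}{\Phi}\Big(\frac{\Phi}{|\log\va|}\Big)^{\frac1{n-3}}\le C\eta^{\frac1{n-3}}\ll\frac38.
\]
This integral converges because of the positive power, and this technical refinement of the ODE argument is what I expect to carry the proof. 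The contradiction then yields $\Phi(\frac12)\le C$, which after scaling back is the claimed bound $E_\va(u_\va,B_{r/2}(x))\le Cr^{n-2}$.
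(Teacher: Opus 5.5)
Your final argument (keep the factor $h$ exact, split the good radii according to which term dominates, and bound the measure of each piece by integrating $\ud\Phi/\Phi'$ along the non-decreasing function $\Phi$) is essentially the paper's proof. Lemma~\ref{AnODE} packages both alternatives into the single potential $F=-4C_0^2/y_*+(n-2)(2C_0)^{\frac{n-3}{n-2}}(y_*/|\log\va|)^{\frac{1}{n-2}}$ with $F'\ge 1$ on good radii, and absorbs the additive constants by adding $\eta_0 s^{n-2}/(n-2)$ to the energy instead of your case split on $\Phi'\le K$. One small correction: solving $\Phi\le C(\Phi')^{\frac{n-2}{n-3}}|\log\va|^{-\frac{1}{n-3}}$ for $\Phi'$ gives $\Phi'\ge c\,\Phi\,(|\log\va|/\Phi)^{\frac{1}{n-2}}$, so the length integrand is $\frac{\ud\Phi}{\Phi}(\Phi/|\log\va|)^{\frac{1}{n-2}}$, with power $\frac{1}{n-2}$ rather than $\frac{1}{n-3}$. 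This still yields the needed smallness $C\eta^{\frac{1}{n-2}}$, because $\Phi(1)\le\eta|\log\va|$.
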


The proof follows the strategy of \cite[Proposition 8]{Can17}, adapted to arbitrary dimensions. The proof is based on Lemma~\ref{Luckhauslemma}, which, by a comparison argument, yields a differential inequality for the energy of a minimizer~$u_\va$ on the ball~$B_\rho(x)$, with~$\f{r}{2} \leq \rho \leq r$. However, since Lemma \ref{Luckhauslemma} differs from its three-dimensional counterpart, the differential inequality we obtain is also slightly different from its three-dimensional analog. Nevertheless, this modified inequality is still enough to obtain uniform energy bounds on~$B_{\f{r}{2}}(x)$, as the following lemma shows.

\begin{lem}\label{AnODE}
Let $y : [\frac{1}{2}, 1] \to [0, +\ift)$ be a non-decreasing function. Suppose that there exists a measurable set $D \subset [\frac{1}{2}, 1]$ with the following properties.
\begin{enumerate}[label=$(\mathrm{\roman*})$]
\item $\HH^1(D) \geq \frac{1}{4}$.
\item\label{iilog} For every $t \in D$,
\[
y(t) \leq C_0\left((y'(t))^{\f{1}{2}}+ \left(\frac{y'(t)}{|\log\wh{\va}|}\right)^{\frac{n-2}{n-3}} |\log\wh{\va}| + 1\right),
\]
where $\wh{\va} \in (0,1)$ is a fixed parameter.
\end{enumerate}
Then, there exists $\eta_1 = \eta_1(C_0, n) > 0$ such that the following property holds. If there exists $t_0 \in [\frac{1}{2}, 1]$ satisfying
\[
\HH^1\left(D \cap \left[\frac{1}{2}, t_0\right]\right) \geq \frac{1}{8}
\quad \text{and} \quad
y(t_0) \leq \eta_1 |\log\wh{\va}|,
\]
then $y(\frac{1}{2}) \leq C$, where $C > 0$ depends only on $C_0$ and $n$.
\end{lem}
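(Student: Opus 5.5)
Set $L:=|\log\wh{\va}|$ and $p:=\f{n-2}{n-3}\in(1,2]$; here $n\ge4$, as assumed throughout the paper, so $p$ is finite. The plan is to turn the pointwise inequality \ref{iilog} into a lower bound on $y'$ at every point of $D\cap[\f12,t_0]$ where $y$ is large. Then I integrate suitable increasing functions of $y$ over the two parts of this set, according to which term in \ref{iilog} is active.

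The tool for integration is the following standard fact about monotone functions. A non-decreasing $y$ is differentiable a.e. Moreover, for any $C^1$ increasing function $g$ and measurable $E\subset[a,b]$,
\[
\int_E (g\circ y)'\,\ud t\le g(y(b))-g(y(a)),
\]
because the singular part of the distributional derivative of $g\circ y$ is non-negative. All inequalities below are understood a.e. on $D$.

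\textbf{Step 1 (case split).} Fix $K:=6C_0$ and suppose, for contradiction, that $y(\f12)>K$. Since $y$ is non-decreasing, $K<y(t)\le y(t_0)\le\eta_1L$ for all $t\in[\f12,t_0]$. For $t\in D\cap[\f12,t_0]$, the constant term in \ref{iilog} is at most $y/6$. Hence one of the other two terms is at least $y/(3C_0)$. This gives a decomposition $D\cap[\f12,t_0]=D_1\cup D_2$, defined as follows.
\begin{itemize}
\item[] \emph{On $D_1$} we have $y\le 3C_0(y')^{1/2}$, i.e. $y'\ge y^2/(9C_0^2)$.
\item[] \emph{On $D_2$} we have $y\le 3C_0L(y'/L)^{p}$, i.e.
\[
y'\ge L^{1-1/p}\Big(\f{y}{3C_0}\Big)^{1/p}.
\]
\end{itemize}
Because $\HH^1(D\cap[\f12,t_0])\ge\f18$, at least one of $D_1$, $D_2$ has measure at least $\f1{16}$.

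\textbf{Step 2 (the case $\HH^1(D_1)\ge\f1{16}$).} Take $g(s)=-1/s$. On $D_1$ we get $(-1/y)'=y'/y^2\ge 1/(9C_0^2)$. Integrating over $D_1$ yields
\[
\f1{y(1/2)}\ge \f1{y(1/2)}-\f1{y(t_0)}\ge\f{1}{144C_0^2},
\]
so $y(\f12)\le144C_0^2$.

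\textbf{Step 3 (the case $\HH^1(D_2)\ge\f1{16}$).} Take $g(s)=s^{1-1/p}$, which is increasing since $p>1$. On $D_2$,
\[
(y^{1-1/p})'=(1-\tfrac1p)\,y^{-1/p}y'\ge (1-\tfrac1p)(3C_0)^{-1/p}L^{1-1/p}.
\]
Integrating over $D_2$ gives
\[
y(t_0)^{1-1/p}\ge c(C_0,n)\,L^{1-1/p},
\]
that is, $y(t_0)\ge c'(C_0,n)L$. This contradicts $y(t_0)\le\eta_1L$ once $\eta_1<c'$. Therefore only the case of Step 2 can occur, and $y(\f12)\le\max\{K,144C_0^2\}=:C(C_0,n)$.

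The main obstacle is the nonlinear term in \ref{iilog}. A naive linearization using $y\le\eta_1L$ only shows that $\log y$ grows by a bounded amount. That falls short of the required bound, because $L$ is unbounded. The fix is to integrate the exact power $y^{1-1/p}$, which exploits the factor $L^{1-1/p}$ and produces a contradiction of order $L$. The remaining point that needs care is justifying the integration of $(g\circ y)'$ over a measurable subset for a merely monotone $y$, which is exactly the fact stated at the start.
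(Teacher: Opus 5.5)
Your proof is correct and is essentially the paper's argument. The paper also absorbs the constant term (via $y_*:=y-C_0$ instead of your threshold $y>6C_0$), reduces (ii) to $y_*'\geq\min\{(y_*/2C_0)^2,\,(y_*/(2C_0|\log\wh{\va}|))^{\frac{n-3}{n-2}}|\log\wh{\va}|\}$, and integrates your two antiderivatives $-1/y$ and $y^{1/(n-2)}$. The only difference is that the paper adds these into a single function $F$ with $F'\geq 1$ on all of $D$, which avoids your pigeonhole split into $D_1$, $D_2$; your explicit justification of $\int_E (g\circ y)'\,\ud t\leq g(y(b))-g(y(a))$ for monotone $y$ covers a point the paper handles only by citing the fundamental theorem of calculus.
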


\begin{proof}
If there exists $t_* \in [\frac{1}{2}, 1]$ such that $y(t_*) \leq 2C_0$, then since $y(\cdot)$ is non-decreasing, $y(\f{1}{2}) \leq 2C_0$ and the proof is complete. We may therefore assume $y(t) > 2C_0$ for any $t \in [\frac{1}{2}, 1]$. Set $y_*(t) := y(t) - C_0 > C_0$ on $[\frac{1}{2}, 1]$. For any $t \in D$, the assumption \ref{iilog} implies the inequality
\[
y_*(t) \leq 2C_0 \max\left\{(y_*'(t))^{\f{1}{2}},
\left(\frac{y_*'(t)}{|\log\wh{\va}|}\right)^{\frac{n-2}{n-3}} |\log\wh{\va}|\right\},
\]
which rearranges to
\[
y_*'(t) \geq \min\left\{\left(\frac{y_*(t)}{2C_0}\right)^{2},
\left(\frac{y_*(t)}{2C_0 |\log\wh{\va}|}\right)^{\frac{n-3}{n-2}} |\log\wh{\va}|\right\}
\quad \text{for any } t \in D.
\]
In particular, for $t\in D$ we have
\[
\frac{y_*'(t)}{\left(\dfrac{y_*(t)}{2C_0}\right)^{2}}
+ \frac{y_*'(t)}{\left(\dfrac{y_*(t)}{2C_0|\log\wh{\va}|}\right)^{\frac{n-3}{n-2}}|\log\wh{\va}|}
\geq 1.
\]
Noting that the left-hand side is the derivative of
\[
F(t) := \frac{-4C_0^2}{y_*(t)}
+ (n-2)(2C_0)^{\frac{n-3}{n-2}} \left(\frac{y_*(t)}{|\log\wh{\va}|}\right)^{\frac{1}{n-2}},
\]
we obtain
\be
F'(t) \geq 1 \quad \text{for any } t \in D. \label{Fprimetgeq1}
\ee
Since $y_*$ is non-decreasing, so is $F$. By assumption,
\be
\HH^1\(D\cap\left[\f{1}{2},t_0\right]\)\geq\f{1}{8},\quad y(t_0)\leq\eta_1|\log\wh{\va}|.\label{geq18}
\ee
The fundamental theorem of calculus, \eqref{Fprimetgeq1}, and \eqref{geq18} give
\[
F(t_0)-F\(\f{1}{2}\)\geq\int_DF'(t)\ud t\geq \HH^1\left(D \cap \left[\frac{1}{2}, t_0\right]\right)\geq\f{1}{8}.
\]
Since by \eqref{geq18}, $y(t_0) \leq \eta_1 |\log\wh{\va}|$, we have
\[
F(t_0) \leq (n-2)(2C_0)^{\frac{n-3}{n-2}} \eta_1^{\frac{1}{n-2}}.
\]
Therefore,
\[
F\(\f{1}{2}\)\leq (n-2)(2C_0)^{\f{n-3}{n-2}}\eta_1^{\f{1}{n-2}}-\f{1}{8}.
\]
Choosing $\eta_1 = \eta_1(C_0, n) \in (0,1)$ small enough that the right-hand side is at most $-\frac{1}{16}$, we get $ F(\f{1}{2})\leq-\f{1}{16} $, which forces
\[
-\f{4C_0^2}{y_*\(\f{1}{2}\)}\leq-\f{1}{16},
\]
implying
\[
y\(\frac{1}{2}\) \leq y_*\(\frac{1}{2}\) + C_0 \leq 64C_0^2 + C_0 =: C.
\]
Then we complete the proof.
\end{proof}

\begin{proof}[Proof of Proposition \ref{clearingout}]
By translation, we may take $x = 0$. Assume
\[
E_{\va}(u_{\va},B_r)\leq\eta r^{n-2}\log\f{r}{\va}.
\]
Define the good radius set
\[
D^{\va}:=\left\{t\in\left[\f{r}{2},r\right]:E_{\va}(u_{\va},\pa B_t)\leq 4\eta r^{n-3}\log\f{r}{\va}\right\}.
\]
By Fubini's theorem, $\HH^1(D^{\va}) \geq \frac{r}{4}$. There exists
$\ol{\va}_1 = \ol{\va}_1(n) \in (0,1)$ such that for any $\va \in (0, \ol{\va}_1 r)$ and $t \in D^{\va}$,
\be
E_{\va}(u_{\va},\pa B_t)\leq 8\eta t^{n-3}\log\f{t}{\va}.\label{Dvadef}
\ee
Throughout what follows, $\ol{\va}_1$ may be further reduced finitely many times without change of notation.

Fix $t \in D^{\va}$ and set $\ol{\va} := \f{\va}{t}$ and $u_{\ol{\va}}(x) := u_{\va}(tx)$. Then
\[
E_{\ol{\va}}(u_{\ol{\va}}, \pa B_1) \leq 8\eta |\log\ol{\va}|.
\]
Applying Lemma \ref{Luckhauslemma} with $(\eta_0, \ol{\va}_0) = (\eta_0,\ol{\va}_0)(f,M,\cN,n) \in (0,1)$ and
\[
h(\ol{\va})=\(\f{E_{\ol{\va}}(u_{\ol{\va}},\pa B_1)+\eta_0}{\eta_0|\log\ol{\va}|}\)^{\f{1}{n-3}},
\]
we obtain $v_{\ol{\va}} \in H^1(\pa B_1, \cN)$ and
$\vp_{\ol{\va}} \in H^1(B_1 \backslash B_{1-h(\ol{\va})}, \R^m)$
such that, whenever $8\eta \in (0, \frac{\eta_0}{100^n})$ and $\ol{\va} \in (0, \ol{\va}_0)$,
\[
\vp_{\ol{\va}}(x) = u_{\ol{\va}}(x)
\quad \text{and} \quad
\vp_{\ol{\va}}((1-h(\ol{\va}))x) = v_{\ol{\va}}(x)
\quad \text{for }\HH^{n-1}\text{-a.e.}x \in \pa B_1,
\]
and
\begin{gather}
\frac{1}{2} \int_{\pa B_1} |\na_{\pa B_1} v_{\ol{\va}}|^2 \ud\HH^{n-1}
\leq CE_{\ol{\va}}(u_{\ol{\va}}, \pa B_1), \label{VboundL1} \\
E_{\ol{\va}}(\vp_{\ol{\va}}, B_1 \backslash B_{1-h(\ol{\va})})
\leq C \left(\frac{E_{\ol{\va}}(u_{\ol{\va}}, \pa B_1) + \eta_0}{\eta_0 |\log\ol{\va}|}\right)^{\frac{1}{n-3}}
E_{\ol{\va}}(u_{\ol{\va}}, \pa B_1). \label{WboundL1}
\end{gather}
Choose $\ol{\va}_1 \in (0, \frac{\ol{\va}_0}{2})$. Applying Lemma \ref{ExtensionLemma2}, there exists $w_{\ol{\va}} \in H^1(B_1, \cN)$ with $w_{\ol{\va}}|_{\pa B_1} = v_{\ol{\va}}$ and
\be
E_{\ol{\va}}(w_{\ol{\va}},B_1)=\f{1}{2}\int_{B_1}|\na w_{\ol{\va}}|^2\leq C\(\int_{\pa B_1}|\na_{\pa B_1}v_{\ol{\va}}|^2\ud\HH^{n-1}\)^{\f{1}{2}}\stackrel{\eqref{VboundL1}}{\leq}C(E_{\ol{\va}}(u_{\ol{\va}},\pa B_1))^{\f{1}{2}}.\label{wvaestimate}
\ee
Define the competitor
\[
\wt{u}_{\ol{\va}}:=\left\{\begin{aligned}
&\vp_{\ol{\va}}(x)&\text{ if }&x\in B_1\backslash B_{1-h(\ol{\va})},\\
&w_{\ol{\va}}\(\f{x}{1-h(\ol{\va})}\)&\text{ if }&x\in B_{1-h(\ol{\va})}.
\end{aligned}\right.
\]
Since $\wt{u}_{\ol{\va}}|_{\pa B_1} = u_{\ol{\va}}|_{\pa B_1}$, the minimality of $u_{\ol{\va}}$ gives
\[
E_{\ol{\va}}(u_{\ol{\va}},B_1)\leq E_{\ol{\va}}(\wt{u}_{\ol{\va}},B_1).
\]
Using \eqref{WboundL1} and \eqref{wvaestimate}, we obtain
\be
E_{\ol{\va}}(u_{\ol{\va}}, B_1)
\leq C\left((E_{\ol{\va}}(u_{\ol{\va}}, \pa B_1))^{\f{1}{2}}
+ \eta_0^{-\frac{1}{n-3}}
\left(\frac{E_{\ol{\va}}(u_{\ol{\va}},\pa B_1) + \eta_0}{|\log\ol{\va}|}\right)^{\frac{n-2}{n-3}}
|\log\ol{\va}|\right). \label{Evascaled}
\ee
Scaling back to $u_{\va}$ and using $t \in D^{\va} \subset [\frac{r}{2}, r]$ together with $\va \in (0, \ol{\va}_1 r)$ for $\ol{\va}_1$ sufficiently small, we get
\be
E_{\va}(u_{\va}, B_t)
\leq C\left(t^{\frac{n-1}{2}}(E_{\va}(u_{\va}, \pa B_t))^{\f{1}{2}}
+ \eta_0^{-\frac{1}{n-3}}
\left(\frac{E_{\va}(u_{\va}, \pa B_t) + \eta_0 t^{n-3}}{\log\frac{r}{\va}}\right)^{\frac{n-2}{n-3}}
\log\frac{r}{\va}\right), \label{Evaleqestimate}
\ee
where $C = C(f, M, \cN, n) > 0$.

Set $ \wh{\va}:=\f{\va}{r} $, $ u_{\wh{\va}}(x):=u_{\va}(rx) $, and
\[
y(s) := E_{\wh{\va}}(u_{\wh{\va}}, B_s) + \frac{\eta_0 s^{n-2}}{n-2}, \quad s \in [0,1].
\]
From \eqref{Evaleqestimate}, the rescaled set $D^{\wh{\va}} := r^{-1} D^{\va} \subset [\frac{1}{2}, 1]$ satisfies $\HH^1(D^{\wh{\va}}) \geq \frac{1}{4}$, and for any $ s\in D^{\wh{\va}} $,
\[
y(s) \leq C_0\left((y'(s))^{\f{1}{2}}
+ \left(\frac{y'(s)}{|\log\wh{\va}|}\right)^{\frac{n-2}{n-3}} |\log\wh{\va}| + 1\right),
\]
where $C_0 = C_0(f, M, \cN, n) > 0$. Since $\HH^1(D^{\wh{\va}}) \geq \frac{1}{4}$,
there exists $s_0 \in D^{\wh{\va}}$ with
\[
\HH^1\left(D^{\wh{\va}} \cap \left[\f{1}{2}, s_0\right]\right) \geq \frac{1}{8}.
\]
By \eqref{Dvadef},
\[
0 \leq \frac{y(s_0)}{|\log\wh{\va}|} \leq 8\eta + \frac{\eta_0}{|\log\wh{\va}|}.
\]
Choosing $(\eta_1, \ol{\va}_1) = (\eta_1, \ol{\va}_1)(f, M, \cN, n)\in(0,1)$ sufficiently small and applying Lemma \ref{AnODE}, we conclude $y(\frac{1}{2}) \leq C$ whenever $\wh{\va} \in (0, \ol{\va}_1)$ and $\eta \in (0, \eta_1)$. Scaling back to $u_{\va}$, we obtain
\[
E_{\va}(u_{\va}, B_{\f{r}{2}}) \leq C r^{n-2},
\]
where $C = C(f, M, \cN, n) > 0$. This completes the proof.
\end{proof}

\begin{rem}
Heuristically, at least, the energy divided by~$|\log\va|$ provides an upper bound for the $(n-2)$-dimensional area of singularities that arise in the limit as $\va\to 0^+$. (This heuristic can be made rigorous under suitable assumptions on~$\cN$; see~\cite{ABO05, CO21}.) Keeping this observation into account, it appears that the scaling of~\eqref{Evaleqestimate} --- specifically, the power of~$\f{n - 2}{n - 3}$ on the right-hand side --- is consistent with the isoperimetric inequality; see~e.g.~\cite[4.2.10]{Fed69}. This is not surprising, since the proof of Proposition~\ref{clearingout} is based on ideas very similar to those in \cite{Fed69}; in particular, both proofs rely on grids of comparable size.
\end{rem}

\section{The singular set}\label{SectionSingularset1}

Throughout this section, $\{u_{\va}\}_{\va\in(0,1)} \subset H^1(\om, \R^m)$ denotes a family of local minimizers of \eqref{GLfunctional}. Recalling the measure $\mu_\va$ defined by \eqref{definemeasurewithuva} and the assumption \eqref{assumptionbound}, we have
\[
\sup_{\va\in(0,\frac{1}{2})} \mu_{\va}(\ol{\om}) \leq M\left(1 + \frac{1}{\log 2}\right).
\]
Hence, there exist a non-negative Radon measure $\mu_* \in (C^0(\ol{\om}))'$ and a sequence $\va_i \to 0^+$ such that
\be
\mu_{\va_i} \wc^* \mu_* \quad \text{in } (C^0(\ol{\om}))' \label{munconve}
\ee
as $i \to +\ift$. Define the singular set as $S_* := \supp(\mu_*)$, which is a closed subset of $\ol{\om}$.

\subsection{Preliminary properties of the singular set} In this subsection, we establish the basic structural properties of $S_*$. The first lemma shows that a ball on which $\mu_*$ has subcritical mass must lie entirely outside the singular set.

\begin{lem}\label{smallmu0re}
Let $\eta_1 > 0$ be given by Proposition \ref{clearingout}. Let $x \in \om$ and $r \in (0, \dist(x, \pa\om))$. If $\mu_*(\ol{B}_r(x)) < \eta_1 r^{n-2}$, then $ \mu_*(B_{\f{r}{2}}(x))=0 $, that is, $ B_{\f{r}{2}}(x)\subset\om\backslash S_* $.
\end{lem}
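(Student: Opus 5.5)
The plan is to transfer the smallness of $\mu_*(\ol{B}_r(x))$ to the approximating energies, apply the clearing-out property (Proposition \ref{clearingout}) at a slightly smaller radius, and then pass to the limit.

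First, since $\ol{B}_r(x)$ is compact and $\mu_{\va_i}\wc^*\mu_*$, upper semicontinuity on compact sets gives
\[
\limsup_{i\to+\ift}\mu_{\va_i}(\ol{B}_r(x))\leq\mu_*(\ol{B}_r(x))<\eta_1 r^{n-2}.
\]
Hence there is $\eta\in(0,\eta_1)$ with $\mu_{\va_i}(\ol{B}_r(x))\le \eta r^{n-2}$ for all large $i$. In terms of energy, this reads
\[
E_{\va_i}(u_{\va_i},B_r(x))\le \eta r^{n-2}|\log\va_i|.
\]
Because $\log\frac{r}{\va_i}/|\log\va_i|\to1$, after slightly enlarging $\eta$ (still below $\eta_1$) we get $E_{\va_i}(u_{\va_i},B_r(x))\le \eta r^{n-2}\log\frac{r}{\va_i}$ for large $i$.

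To fit the hypotheses of Proposition \ref{clearingout}, which requires local minimality in $B_{2r}(x)$ and an $L^\infty$ bound in $B_{4r}(x)$, we work at a smaller radius. Pick $\rho<r$ close to $r$ and apply the proposition on balls $B_{\rho'}(y)$ with $y$ near $x$ and $4\rho'<\dist(y,\pa\om)$. A simpler route is to cover $B_{r/2}(x)$ by small balls $B_{s/2}(y)$ with $B_s(y)\subset B_r(x)$ and $4s<\dist(y,\pa\om)$. The obstacle is that smallness does not scale down, because $\eta r^{n-2}\le\eta_1 s^{n-2}$ fails for $s\ll r$. So covering loses the constant, and we must use the monotonicity formula (Proposition \ref{Mo}). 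For $y\in B_{r/2}(x)$ and $s\le r/2$, monotonicity gives
\[
s^{2-n}E(B_s(y))\le (r-|y-x|)^{2-n}E(B_{r-|y-x|}(y))\le (r/2)^{2-n}E(B_r(x)).
\]
This loses a factor $2^{n-2}$, which the hypothesis does not absorb. I therefore expect the intended argument to apply the proposition directly with center $x$ and radius $r$. In that case the restriction $\dist(x,\pa\om)>r$ versus $4r$ is only a technicality, since the proposition's proof uses only the ball $B_r$ and the $L^\infty$ bound of \eqref{assumptionbound} on all of $\om$. I would therefore remark that the proof of Proposition \ref{clearingout} only uses minimality on $B_r(x)$ and the global bound $\|u_\va\|_{L^\infty(\om)}\le M$, and apply it verbatim. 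This step, checking the domain requirements of the clearing-out proposition, is the main point to verify.

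With that, for large $i$ and $\va_i<\ol{\va}_1 r$, we obtain $E_{\va_i}(u_{\va_i},B_{r/2}(x))\le Cr^{n-2}$. Consequently $\mu_{\va_i}(B_{r/2}(x))\le Cr^{n-2}/|\log\va_i|\to0$. By lower semicontinuity on open sets, $\mu_*(B_{r/2}(x))\le\liminf_i\mu_{\va_i}(B_{r/2}(x))=0$. Hence $B_{r/2}(x)\cap\supp\mu_*=\emptyset$, that is, $B_{r/2}(x)\subset\om\backslash S_*$.
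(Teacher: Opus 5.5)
Your proposal is correct and follows essentially the same argument as the paper: use upper semicontinuity of $\mu_{\va_i}\wc^*\mu_*$ on the closed ball to get $E_{\va_i}(u_{\va_i},B_r(x))\le\eta r^{n-2}\log\frac{r}{\va_i}$ with $\eta<\eta_1$, apply Proposition \ref{clearingout} directly at center $x$ and radius $r$, and conclude by lower semicontinuity on the open ball $B_{r/2}(x)$. Your observation that the proof of Proposition \ref{clearingout} only needs minimality on balls $B_t(x)$ with $t\le r$ (compactly contained in $\om$ since $r<\dist(x,\pa\om)$), together with the bound $\|u_\va\|_{L^\infty(\om)}\le M$, is a valid justification of a domain requirement the paper passes over without comment, and the exploratory covering/monotonicity detour can simply be dropped.
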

\begin{proof}
By \cite[Theorem 1.40]{EG15} and the boundedness of $\om$, for any closed set $F \subset \ol{\om}$ and open set $G \subset \ol{\om}$,
\be
\mu_*(F) \geq \limsup_{i\to+\ift} \mu_{\va_i}(F),
\quad
\mu_*(G) \leq \liminf_{i\to+\ift} \mu_{\va_i}(G). \label{weakconFG}
\ee
Suppose $\mu_*(\ol{B}_r(x)) < \eta_1 r^{n-2}$. From \eqref{munconve} and \eqref{weakconFG},
\begin{align*}
\limsup_{i\to+\ift}
\frac{E_{\va_i}(u_{\va_i}, B_r(x))}{r^{n-2} \log\frac{r}{\va_i}}
&= \limsup_{i\to+\ift}
\frac{E_{\va_i}(u_{\va_i}, \ol{B}_r(x))}{r^{n-2} \log\frac{1}{\va_i}}
\cdot \frac{\log\frac{1}{\va_i}}{\log r + \log\frac{1}{\va_i}} 
\leq \frac{\mu_*(\ol{B}_r(x))}{r^{n-2}} < \eta_1.
\end{align*}
Proposition \ref{clearingout} then gives
\[
E_{\va_i}(u_{\va_i}, B_{\f{r}{2}}(x)) \leq C r^{n-2}
\]
for any sufficiently large $i$ with $\va_i \in (0, \ol{\va}_1 r)$. Applying \eqref{weakconFG} to the open ball $B_{\f{r}{2}}(x)$,
\[
\mu_*(B_{\f{r}{2}}(x))
\leq \liminf_{i\to+\ift} \mu_{\va_i}(B_{\f{r}{2}}(x))
\leq \liminf_{i\to+\ift} \frac{C r^{n-2}}{\log\frac{1}{\va_i}} = 0.
\]
This completes the proof.
\end{proof}

The next lemma establishes a monotonicity formula for the $(n-2)$-density ratio of $\mu_*$, which will be used to identify the singular set.

\begin{lem}\label{monomu0}
For $x \in \om$ and $r \in (0, \dist(x, \pa\om))$, define
\[
\Theta_r^{n-2}(\mu_*, x)
:= \frac{\mu_*(\ol{B}_r(x))}{\w_{n-2} r^{n-2}},
\]
where $\w_{n-2} := \HH^{n-2}(B_1^{n-2})$ is the volume of the unit $(n-2)$-ball. Then, the function $r \mapsto \Theta_r^{n-2}(\mu_*, x)$ is non-decreasing.
\end{lem}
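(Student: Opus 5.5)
The plan is to pass to the limit in the monotonicity formula of Proposition \ref{Mo} applied to each $u_{\va_i}$. By \eqref{assumptionbound} and elliptic regularity the local minimizers $u_{\va_i}$ are smooth solutions of \eqref{EL}. Proposition \ref{Mo} therefore applies directly, and it gives, for every $x\in\om$ and all $0<s<r<\dist(x,\pa\om)$,
\[
s^{2-n}E_{\va_i}(u_{\va_i},B_s(x))\leq r^{2-n}E_{\va_i}(u_{\va_i},B_r(x)).
\]
Dividing by $|\log\va_i|$, this reads $s^{2-n}\mu_{\va_i}(B_s(x))\leq r^{2-n}\mu_{\va_i}(B_r(x))$. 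The inequality is stable under normalization because the normalizing factor does not depend on the radius.

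Next we pass to the limit with the semicontinuity inequalities \eqref{weakconFG}. For any $r'\in(r,\dist(x,\pa\om))$ we use the closed ball $\ol{B}_{r'}(x)$ on the right and the open ball $B_s(x)$ on the left:
\[
s^{2-n}\mu_*(B_s(x))\leq \liminf_{i} s^{2-n}\mu_{\va_i}(B_s(x))\leq \limsup_i r'^{2-n}\mu_{\va_i}(\ol{B}_{r'}(x))\leq r'^{2-n}\mu_*(\ol{B}_{r'}(x)).
\]
The middle step holds because the monotonicity inequality above, applied with $r'$ in place of $r$, persists when $B_{r'}(x)$ is replaced by $\ol{B}_{r'}(x)$, since $\ol{B}_{r'}(x)\supset B_{r'}(x)$ only increases the right-hand side.

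It remains to adjust the balls so that the closed-ball quantity $\Theta_r^{n-2}$ appears on both sides. On the right, let $r'\downarrow r$; since $\ol{B}_{r'}(x)$ decreases to $\ol{B}_r(x)$, continuity from above of the finite measure $\mu_*$ gives $r^{2-n}\mu_*(\ol{B}_r(x))$. On the left, take $s'\in(s,r)$ and apply the inequality with $s'$ in place of $s$. Since $\ol{B}_s(x)\subset B_{s'}(x)$, we get
\[
s'^{2-n}\mu_*(\ol{B}_s(x))\leq r^{2-n}\mu_*(\ol{B}_r(x)),
\]
and letting $s'\downarrow s$ yields $\Theta_s^{n-2}(\mu_*,x)\leq\Theta_r^{n-2}(\mu_*,x)$. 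The only point needing care is this bookkeeping between open and closed balls in the weak-$*$ limit; the monotonicity itself comes for free from Proposition \ref{Mo}. We will also note that the closed balls stay inside $\om$, so the restriction to $\ol{\om}$ in \eqref{weakconFG} causes no issue.
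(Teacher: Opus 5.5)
Your proposal is correct and follows essentially the same route as the paper: you apply Proposition \ref{Mo} to each $u_{\va_i}$ and pass to the limit via \eqref{weakconFG}, using an open ball at the small radius and a closed ball at the large one. You then enlarge the small radius slightly to recover the closed-ball density on the left (your $s'\downarrow s$ is the paper's $(1+\delta)r_1$ with $\delta\to 0^+$). The only cosmetic difference is your auxiliary radius $r'\downarrow r$ on the right. The paper avoids it by working with $\ol{B}_{r_2}(x)$ directly, since $E_{\va_i}(u_{\va_i},\ol{B}_{r}(x))=E_{\va_i}(u_{\va_i},B_{r}(x))$.
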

\begin{proof}
Let $0 < r_1 < r_2 < \dist(x, \pa\om)$ and choose $\delta \in (0,1)$ with $(1+\delta)r_1 < r_2$. Using \eqref{munconve}, \eqref{weakconFG}, and Proposition \ref{Mo}, we have
\begin{align*}
\Theta_{r_1}^{n-2}(\mu_*, x)
&= (1+\delta)^{n-2} \Theta_{(1+\delta)r_1}^{n-2}(\mu_*, x) \\
&\leq \liminf_{i\to+\ift}\frac{E_{\va_i}(u_{\va_i}, B_{(1+\delta)r_1}(x))}{\w_{n-2}((1+\delta)r_1)^{n-2} \log\frac{1}{\va_i}}\cdot (1+\delta)^{n-2} \\
&\leq \limsup_{i\to+\ift}\frac{E_{\va_i}(u_{\va_i}, \ol{B}_{r_2}(x))}
{\w_{n-2} r_2^{n-2} \log\frac{1}{\va_i}}\cdot (1+\delta)^{n-2}
\leq (1+\delta)^{n-2} \Theta_{r_2}^{n-2}(\mu_*, x).
\end{align*}
Letting $\delta \to 0^+$ gives $\Theta_{r_1}^{n-2}(\mu_*, x) \leq
\Theta_{r_2}^{n-2}(\mu_*, x)$.
\end{proof}

Since $r \mapsto \Theta_r^{n-2}(\mu_*, x)$ is non-decreasing and bounded from below, the limit as $r \to 0^+$ exists. We define the $(n-2)$-density of $\mu_*$ at $x$ by
\be
\Theta^{n-2}(\mu_*, x)
:= \lim_{r\to 0^+} \Theta_r^{n-2}(\mu_*, x), \quad x \in \om. \label{Thetadensity}
\ee

The following lemma characterizes $S_* \cap \om$ in terms of this density.

\begin{lem}\label{Thetamu0eta0}
With $\eta_1 > 0$ as in Proposition \ref{clearingout}, we have
\[
S_* \cap \om
= \{x \in \om : \Theta^{n-2}(\mu_*, x) > 0\}
= \left\{x \in \om : \Theta^{n-2}(\mu_*, x) \geq
  \frac{\eta_1}{2\w_{n-2}}\right\}.
\]
\end{lem}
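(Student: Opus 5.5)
The plan is to prove the chain of inclusions
\[
\Big\{x\in\om:\Theta^{n-2}(\mu_*,x)\geq\tfrac{\eta_1}{2\w_{n-2}}\Big\}\subset\{x\in\om:\Theta^{n-2}(\mu_*,x)>0\}\subset S_*\cap\om\subset\Big\{x\in\om:\Theta^{n-2}(\mu_*,x)\geq\tfrac{\eta_1}{2\w_{n-2}}\Big\}.
\]
The first inclusion is trivial.

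For the second inclusion, take $x\in\om$ with $x\notin S_*$. Since $S_*$ is closed, there is $r_0>0$ with $B_{r_0}(x)\cap S_*=\emptyset$, so $\mu_*(\ol{B}_r(x))=0$ for every $r<r_0$. Hence $\Theta^{n-2}_r(\mu_*,x)=0$ for small $r$, and therefore $\Theta^{n-2}(\mu_*,x)=0$. Taking the contrapositive, positive density forces $x\in S_*$.

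For the third inclusion, which carries the content, argue by contradiction. Let $x\in S_*\cap\om$ and suppose $\Theta^{n-2}(\mu_*,x)<\f{\eta_1}{2\w_{n-2}}$. By the definition \eqref{Thetadensity} as a limit (monotone by Lemma \ref{monomu0}), there is $r\in(0,\dist(x,\pa\om))$ with $\Theta_r^{n-2}(\mu_*,x)<\f{\eta_1}{2\w_{n-2}}$. This gives
\[
\mu_*(\ol{B}_r(x))<\f{\eta_1}{2}r^{n-2}<\eta_1r^{n-2}.
\]
Lemma \ref{smallmu0re} then yields $B_{r/2}(x)\subset\om\backslash S_*$, which contradicts $x\in S_*$. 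So every $x\in S_*\cap\om$ has density at least $\f{\eta_1}{2\w_{n-2}}$.

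The only point needing care is that Lemma \ref{smallmu0re} requires $r<\dist(x,\pa\om)$ and a strict inequality on the closed ball. Both are ensured by the choice of $r$ above, because monotonicity lets us take $r$ as small as we like. The factor $\f12$ is harmless slack. I expect no real obstacle, since all the analytic difficulty is already contained in Proposition \ref{clearingout} through Lemma \ref{smallmu0re}.
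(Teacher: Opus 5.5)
Your proof is correct and follows the paper's argument: the first two inclusions come from the definition of the support, and the reverse inclusion comes from applying Lemma~\ref{smallmu0re} at small radii. The only difference is that the paper argues directly through the contrapositive, getting $\mu_*(\ol{B}_{r}(x))\geq\eta_1 r^{n-2}$ for all small $r$ and hence the stronger bound $\Theta^{n-2}(\mu_*,x)\geq\eta_1/\w_{n-2}$, whereas you argue by contradiction.
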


\begin{proof}
Since $S_* = \supp(\mu_*)$, a point $x \in \om$ belongs to $\om \backslash S_*$ if and only if $\mu_*(B_r(x)) = 0$ for some $r > 0$. Therefore,
\be
\left\{x \in \om : \Theta^{n-2}(\mu_*, x) \geq \frac{\eta_1}{2\w_{n-2}}\right\}\subset\{x \in \om : \Theta^{n-2}(\mu_*, x) > 0\}
\subset S_* \cap \om. \label{Slinesubset}
\ee
For the reverse inclusion, let $x \in S_* \cap \om$. Then $\mu_*(B_r(x)) > 0$ for any $r > 0$. By Lemma \ref{smallmu0re}, $\mu_*(\ol{B}_{2r}(x)) \geq\eta_1 (2r)^{n-2}$ for any $r > 0$, so
\[
\Theta^{n-2}(\mu_*, x)
= \lim_{r\to 0^+} \frac{\mu_*(\ol{B}_r(x))}{\w_{n-2} r^{n-2}}
\geq \frac{\eta_1 (2r)^{n-2}}{\w_{n-2}(2r)^{n-2}}
= \frac{\eta_1}{\w_{n-2}}.
\]
Hence,
$$
S_* \cap \om \subset \left\{x \in \om : \Theta^{n-2}(\mu_*, x) \geq
\frac{\eta_1}{2\w_{n-2}}\right\}.
$$
Together with \eqref{Slinesubset}, this completes the proof.
\end{proof}

We can now establish the rectifiability of $S_*$ and an integral representation
of $\mu_*$.

\begin{prop}\label{proprectifiable}
The set $S_* \cap \om$ is $(n-2)$-rectifiable and
\be
\HH^{n-2}(S_* \cap \om) \leq C, \label{HausdorffOmega}
\ee
where $C > 0$ depends only on $f, M, \cN$, and $n$. Moreover, for every Borel set
$A \subset \ol{\om}$, there holds
\be
\mu_*(A \cap \om)= \int_{(A \cap \om) \cap S_*} \Theta^{n-2}(\mu_*, y) \ud\HH^{n-2}(y).
\label{mustarrepresent}
\ee
\end{prop}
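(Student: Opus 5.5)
The proof combines the density bounds already available with two standard facts from geometric measure theory. The first step is to record two-sided density bounds for $\mu_*$ on $S_*\cap\om$. Lemma \ref{Thetamu0eta0} gives $\Theta^{n-2}(\mu_*,x)\geq \frac{\eta_1}{2\w_{n-2}}$ for every $x\in S_*\cap\om$. For an upper bound, fix $x\in\om$ and $r<\dist(x,\pa\om)$. The monotonicity in Lemma \ref{monomu0} gives
\[
\Theta^{n-2}(\mu_*,x)\leq \Theta^{n-2}_r(\mu_*,x)\leq \frac{\mu_*(\ol\om)}{\w_{n-2}r^{n-2}}.
\]
Hence $\Theta^{n-2}(\mu_*,\cdot)$ is finite everywhere in $\om$ and locally bounded, uniformly on compact subsets. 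By \eqref{munconve} together with \eqref{assumptionbound}, $\mu_*(\ol\om)\le C(M)$.

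For the measure bound \eqref{HausdorffOmega}, I apply the standard density comparison theorem (e.g.\ \cite[Theorem 2.10.19]{Fed69} or Simon's lemma): if $\Theta^{*,n-2}(\mu,x)\geq t$ for all $x\in A$, then $\mu\geq t\,\HH^{n-2}\llcorner A$. Taking $A=S_*\cap\om$ and $t=\eta_1/(2\w_{n-2})$ gives
\[
\HH^{n-2}(S_*\cap\om)\leq \frac{2\w_{n-2}}{\eta_1}\,\mu_*(\om)\leq C(f,M,\cN,n).
\]
The constant depends on $f,\cN,n$ because $\eta_1$ does.

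For the representation formula \eqref{mustarrepresent}, the first task is to show that $\mu_*\llcorner\om$ is absolutely continuous with respect to $\HH^{n-2}\llcorner(S_*\cap\om)$. The measure $\mu_*\llcorner\om$ is concentrated on $S_*$. On $S_*\cap\om$ it is absolutely continuous with respect to $\HH^{n-2}$, since the upper density is locally bounded: on the sets $\{x\in S_*:\dist(x,\pa\om)>\delta\}$ the density bound $\Theta^{n-2}\leq C_\delta$ gives $\mu_*\llcorner A\le 2^{n-2}C_\delta\,\HH^{n-2}\llcorner A$, again by the comparison theorem. Because $\HH^{n-2}\llcorner(S_*\cap\om)$ is locally finite, the Radon--Nikodym and Besicovitch differentiation theorems identify the density as $\Theta^{n-2}(\mu_*,\cdot)$. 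This gives \eqref{mustarrepresent}.

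The rectifiability is the main obstacle, and I handle it through Preiss's theorem or its simpler variant. Preiss's theorem applies because $\mu_*\llcorner\om$ is a Radon measure whose $(n-2)$-density exists (as a limit, by Lemma \ref{monomu0}) and lies in $(0,\infty)$ $\mu_*$-a.e.; it follows that $\mu_*\llcorner\om$ is $(n-2)$-rectifiable, and hence so is $S_*\cap\om$. The alternative I would try first, to avoid invoking Preiss, is the Marstrand--Mattila route. Here one shows that $S_*\cap\om$ has positive lower density, then uses the monotonicity formula (the radial-derivative term in Proposition \ref{Mo} passes to the limit) to show that tangent measures are cones. One then combines this with density existence. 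However, the cleanest path is simply to quote Preiss's theorem, given that Lemma \ref{monomu0} provides genuine existence of the density.
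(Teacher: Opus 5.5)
Your route is the paper's. For \eqref{HausdorffOmega} you use the density lower bound on $S_*\cap\om$ with a covering argument: the paper runs the Vitali covering by hand, and the comparison theorem you quote is proved the same way. Rectifiability comes from Preiss's theorem and \eqref{mustarrepresent} from differentiation of measures. The one real variation is that you get $\mu_*\llcorner\om\ll\HH^{n-2}\llcorner(S_*\cap\om)$ elementarily, from the local upper bound $\Theta^{n-2}(\mu_*,x)\le\mu_*(\ol\om)/(\w_{n-2}\dist(x,\pa\om)^{n-2})$. The paper instead reads absolute continuity off Preiss's theorem, so your version is a mild simplification.

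The identification step, however, does not work in the order you give it. Radon--Nikodym plus Besicovitch only yield $\mu_*\llcorner\om=\Theta\,\HH^{n-2}\llcorner(S_*\cap\om)$ with
\[
\Theta(x)=\lim_{r\to0^+}\frac{\mu_*(\ol B_r(x))}{\HH^{n-2}(S_*\cap\ol B_r(x))}.
\]
To replace $\Theta$ by $\Theta^{n-2}(\mu_*,\cdot)$ you also need $\HH^{n-2}(S_*\cap\ol B_r(x))/(\w_{n-2}r^{n-2})\to1$ for $\HH^{n-2}$-a.e.\ $x\in S_*\cap\om$. This need not hold for a general set of finite $\HH^{n-2}$-measure. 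It holds here because $S_*\cap\om$ is rectifiable \cite[Theorem 3.2.19]{Fed69}, which is exactly how the paper concludes. So you must prove rectifiability first and only then identify the density.

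A related point: Preiss's theorem gives rectifiability of the measure $\mu_*\llcorner\om$, i.e.\ it is carried by some countably $(n-2)$-rectifiable set $E$. To conclude that the set $S_*\cap\om$ itself is rectifiable you need $\HH^{n-2}((S_*\cap\om)\setminus E)=0$. This follows from your lower comparison $\mu_*\ge\frac{\eta_1}{2\w_{n-2}}\HH^{n-2}\llcorner(S_*\cap\om)$, and it should be stated explicitly rather than left as ``hence''.
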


\begin{proof}
Fix $\delta > 0$. For each $x \in S_* \cap \om$, Lemma \ref{Thetamu0eta0} gives $r_x \in (0, \min\{\frac{1}{10}\dist(x, \pa\om), \delta\})$ with $\mu_*(\ol{B}_{r_x}(x)) \geq \frac{\eta_1}{2} r_x^{n-2}$. The collection $\{\ol{B}_{r_x}(x)\}_{x \in S_* \cap \om}$ covers $S_* \cap \om$. By Vitali's covering lemma, there exists a countable disjoint subcollection $\{\ol{B}_{r_i}(x_i)\}_{i \in \Z_+}$ such that $S_* \cap \om \subset \cup_i \ol{B}_{10r_i}(x_i)$. Therefore, we have
\[
\HH_\delta^{n-2}(S_* \cap \om)
\leq C \sum_{i=1}^{+\ift} r_i^{n-2}
\leq \frac{C}{\eta_1} \sum_{i=1}^{+\ift} \mu_*(\ol{B}_{r_i}(x_i))
\leq \frac{C}{\eta_1} \mu_*\left(\bigcup_{i \in \Z_+} \ol{B}_{r_i}(x_i)\right)
\leq \frac{C(n)}{\eta_1} \mu_*(\om)
\leq C,
\]
where the last bound uses \eqref{assumptionbound} and \eqref{weakconFG}. Letting $\delta \to 0^+$ yields \eqref{HausdorffOmega}.

By Lemma \ref{Thetamu0eta0}, for any $x \in \supp(\mu_*)$ the density
$\Theta^{n-2}(\mu_*, x)$ is positive and bounded below by
$\frac{\eta_1}{2\w_{n-2}}$. By \cite[Theorem 5.3]{Pre87}, $S_* = \supp(\mu_*)$ is $(n-2)$-rectifiable and $\mu_*$ is absolutely continuous with respect to $\HH^{n-2} \llcorner (S_* \cap \om)$. Hence, there exists an $\HH^{n-2}$-measurable function $\Theta : \om \to \R$ such that
\[
(\mu_* \llcorner \om)(A)
= \int_{(S_* \cap \om) \cap A} \Theta(y) \ud\HH^{n-2}(y)
\]
for every Borel set $A \subset \ol{\om}$. The Besicovitch differentiation theorem gives
\be
\lim_{r\to 0^+}
\frac{\mu_*(\ol{B}_r(x))}{\HH^{n-2}(\ol{B}_r(x) \cap S_*)} = \Theta(x)
\label{muHausdorff}
\ee
for $\HH^{n-2}$-a.e. $x \in S_* \cap \om$. Since $S_*$ is $(n-2)$-rectifiable and \eqref{HausdorffOmega} holds, \cite[Theorem 3.2.19]{Fed69} gives
\[
\lim_{r\to 0^+}
\frac{\HH^{n-2}(\ol{B}_r(x) \cap S_*)}{\w_{n-2} r^{n-2}} = 1
\]
for $ \HH^{n-2} $-a.e. $ x\in S_*\cap\om $. Combining this with \eqref{Thetadensity} and \eqref{muHausdorff}, we obtain $\Theta(x) = \Theta^{n-2}(\mu_*,x)$ for $\HH^{n-2}$-a.e. $x \in S_* \cap \om$, which gives \eqref{mustarrepresent}.
\end{proof}

The following lemma shows that $u_{\va_i}$ has a uniformly bounded energy away from the singular set, which is the basis for the strong convergence in Proposition~\ref{propombackSstar} below.

\begin{lem}\label{CUleq}
For any open set $ U\subset\subset\om\backslash S_* $, there exists $ C>0 $, depending only on $ f,M,\cN,n $, and $ U $, such that for sufficiently large $ i\in\Z_+ $, $ E_{\va_i}(u_{\va_i},U)\leq C $.
\end{lem}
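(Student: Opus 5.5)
The plan is a standard covering argument built on Proposition~\ref{clearingout} and Lemma~\ref{smallmu0re}. Since $\ol{U}$ is compact and disjoint from the closed set $S_*$, and $\ol U\subset\om$, there is $r_0>0$ with
\[
4r_0<\min\{\dist(\ol U,S_*),\,\dist(\ol U,\pa\om)\}.
\]
Then for every $x\in\ol U$ we have $\ol B_{2r_0}(x)\subset\om\backslash S_*$, hence $\mu_*(\ol B_{2r_0}(x))=0$.

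The first step is to upgrade this to a uniform smallness of the rescaled energy. Cover $\ol U$ by finitely many balls $B_{r_0/2}(x_k)$, $k=1,\dots,K$, where $K$ depends only on $U$ and $r_0$ (hence on $U$). For each $k$, $\ol B_{r_0}(x_k)$ is a closed set with $\mu_*(\ol B_{r_0}(x_k))=0$. By \eqref{weakconFG},
\[
\limsup_{i\to+\ift}\frac{E_{\va_i}(u_{\va_i},B_{r_0}(x_k))}{\log\f{1}{\va_i}}\le \mu_*(\ol B_{r_0}(x_k))=0 .
\]
Since $\log\f{r_0}{\va_i}/\log\f1{\va_i}\to1$, for all $i$ large (depending on $k$, and so uniformly since $K$ is finite) we get
\[
E_{\va_i}(u_{\va_i},B_{r_0}(x_k))\le \tfrac{\eta_1}{2}\,r_0^{n-2}\log\f{r_0}{\va_i},
\qquad \va_i<\ol\va_1 r_0 .
\]

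The second step applies Proposition~\ref{clearingout} with $r=r_0$, centered at $x_k$. This is admissible because $B_{4r_0}(x_k)\subset\om$, $\|u_{\va_i}\|_{L^\ift}\le M$ by \eqref{assumptionbound}, and the $u_{\va_i}$ are local minimizers. It yields $E_{\va_i}(u_{\va_i},B_{r_0/2}(x_k))\le Cr_0^{n-2}$. Summing over $k$ gives
\[
E_{\va_i}(u_{\va_i},U)\le KCr_0^{n-2},
\]
a constant depending only on $f,M,\cN,n$ and $U$.

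The only delicate point is the first step, namely passing from vanishing of $\mu_*$ to the quantitative smallness hypothesis of the clearing-out proposition for all large $i$. This is handled by the upper semicontinuity on closed sets in \eqref{weakconFG} together with finiteness of the cover. Note that the threshold index depends on $U$, which is consistent with the phrase ``for sufficiently large $i$''.
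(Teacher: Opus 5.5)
Your proposal is correct and follows the paper's argument: vanishing of $\mu_*$ on closed balls away from $S_*$, combined with the upper semicontinuity \eqref{weakconFG}, gives the small-energy hypothesis of Proposition~\ref{clearingout}, and a finite cover of $\ol U$ finishes the proof. Your explicit choice $4r_0<\dist(\ol U,\pa\om)$ also makes the hypotheses of Proposition~\ref{clearingout} on $B_{4r}$ visibly satisfied, a detail the paper leaves implicit.
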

\begin{proof}
We first consider the case~$U$ is a ball, $U = B_r(x) \subset\subset \om \backslash S_*$. We have $\mu_*(\ol{B}_r(x)) = 0$. By \eqref{weakconFG}, we have
\[
\limsup_{i\to+\ift}
\frac{E_{\va_i}(u_{\va_i}, B_r(x))}{r^{n-2} \log\frac{r}{\va_i}}
\leq r^{2-n} \mu_*(\ol{B}_r(x)) = 0.
\]
Proposition \ref{clearingout} gives
\[
E_{\va_i}(u_{\va_i}, B_{\f{r}{2}}(x)) \leq
Cr^{n-2}
\]
for all indices $I$ large enough. The result for general $U$ follows by covering $\ol{U}$ with finitely many balls $\{B_{\f{r_x}{2}}(x)\}_{x \in \ol{U}}$ with $B_{r_x}(x) \subset\subset \om \backslash S_*$.
\end{proof}

We now establish the strong convergence of $u_{\va_i}$ away from $S_*$ to a minimizing harmonic map.

\begin{prop}\label{propombackSstar}
There exists $u_* \in H_{\loc}^1(\om \backslash S_*, \cN)$, a local minimizer of the Dirichlet energy \eqref{DirichletEnergy}, such that the following properties hold.
\begin{enumerate}[label=$(\theenumi)$]
\item Up to a subsequence, $u_{\va_i} \to u_*$ strongly in $H_{\loc}^1(\om \backslash S_*, \R^m)$.
\item The singular set $\sing(u_*)$ of $u_*$ is $(n-3)$-rectifiable, and $u_{\va_i} \to u_*$ in $C_{\loc}^0(\om \backslash (S_* \cup \sing(u_*)))$.
\end{enumerate}
\end{prop}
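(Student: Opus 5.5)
The plan is to reduce Proposition \ref{propombackSstar} to the bounded-energy compactness result, Proposition \ref{boundedenergyuse}, applied locally, and then glue the local limits by a diagonal argument. Write $\om\backslash S_*$ as an increasing union of open sets $U_k\subset\subset U_{k+1}\subset\subset\om\backslash S_*$; for instance $U_k:=\{x\in\om\backslash S_*:\dist(x,\pa\om\cup S_*)>1/k,\ |x|<k\}$ intersected with a suitable smoothing, or a countable cover by balls $B_{r_j}(x_j)$ with $B_{2r_j}(x_j)\subset\subset\om\backslash S_*$.

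By Lemma \ref{CUleq}, for each $k$ there is $C_k$ with $E_{\va_i}(u_{\va_i},U_k)\le C_k$ for all large $i$. Together with $\|u_{\va_i}\|_{L^\ift}\le M$, this means the restricted family satisfies \eqref{assumptionbounduniform} on $U_k$ (with $M$ replaced by $C_k+M$). The restrictions are still local minimizers on $U_k$, since the definition of local minimizer only tests balls compactly contained in the domain. Proposition \ref{boundedenergyuse} then gives a further subsequence and a map $u_*^{(k)}\in H^1(U_k,\cN)$, locally minimizing the Dirichlet energy on $U_k$. Along this subsequence $u_{\va_i}\to u_*^{(k)}$ strongly in $H^1_{\loc}(U_k)$ and in $C^0_{\loc}(U_k\backslash\sing(u_*^{(k)}))$, and $\sing(u_*^{(k)})$ is $(n-3)$-rectifiable.

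Since $U_k\subset U_{k+1}$, the $L^2_{\loc}$ limits agree on overlaps, so $u_*^{(k+1)}|_{U_k}=u_*^{(k)}$. A Cantor diagonal subsequence therefore yields a single $u_*\in H^1_{\loc}(\om\backslash S_*,\cN)$ with strong $H^1_{\loc}(\om\backslash S_*)$ convergence. Local minimality is local (tested on balls $B_r(x)\subset\subset\om\backslash S_*$, each lying in some $U_k$), so $u_*$ is a local minimizer of \eqref{DirichletEnergy}. Likewise $\sing(u_*)\cap U_k=\sing(u_*^{(k)})$, because the definition of $\sing$ is local. Hence $\sing(u_*)=\bigcup_k\sing(u_*^{(k)})$ is a countable union of $(n-3)$-rectifiable sets, and so it is $(n-3)$-rectifiable. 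Uniform convergence on compact subsets of $\om\backslash(S_*\cup\sing(u_*))$ follows because any such compact set $K$ lies in some $U_k$, and $K\cap\sing(u_*^{(k)})=\emptyset$.

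The only delicate point is checking that the hypotheses of Proposition \ref{boundedenergyuse} really hold on $U_k$. That proposition is stated for a fixed family indexed by $\va\in(0,1)$, whereas we only have bounds along a subsequence and for large $i$. I would handle this by noting that its proof (via \cite{CL22,CLR18}) uses only a sequence with uniform bounds, so it applies verbatim to $\{u_{\va_i}\}_{i\ge i_k}$. If needed, one can work on balls $B_{r_j}(x_j)$, which are smooth domains, and patch the results together as above.
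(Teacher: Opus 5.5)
Your proposal is correct and takes the same route as the paper, whose entire proof is the remark that the statement follows from Proposition \ref{boundedenergyuse} combined with the local energy bound of Lemma \ref{CUleq}. Your exhaustion of $\om\backslash S_*$ by sets $U_k\subset\subset\om\backslash S_*$, with nested subsequences and a diagonal argument, and the locality of minimality and of $\sing(u_*)$, fills in exactly the localization and gluing steps that the paper leaves implicit.
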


\begin{proof}
This follows directly from Proposition \ref{boundedenergyuse} and
Lemma \ref{CUleq}.
\end{proof}

\subsection{The limiting measure \texorpdfstring{$ \mu_* $}{} is a stationary varifold}\label{stationaryvarifoldsec}

We now identify $\mu_* \llcorner \om$ with a stationary integer-multiplicity rectifiable varifold. By Proposition \ref{proprectifiable}, $\mu_* \llcorner \om$ is $(n-2)$-rectifiable. By \cite[Theorem 11.6]{Sim83}, for $\mu_*$-a.e. $x \in \om$ there exists a unique $(n-2)$-dimensional subspace $V_x \subset \R^n$ such that
\be
\lim_{\lda\to 0^+} \lda^{2-n}
\int_{\R^n} \vp\left(\frac{y-x}{\lda}\right)\ud\mu_*(y)
= \Theta^{n-2}(\mu_*, x) \int_{V_x} \vp(y) \ud\HH^{n-2}(y) \label{tangentplane}
\ee
for every $\vp \in C_0^0(\R^n)$. We call $V_x$ the approximate tangent space of $\mu_*$ at $x$ and write $T_x\mu_*$ for it. Let $\MM_n(\R)$ denote the space of $n \times n$ real matrices, and for a subspace $V \subset \R^n$ let $P_V \in\MM_n(\R)$ be the orthogonal projection onto $V$. Set
\[
\bG(n, n-2)
:= \{P_V \in \MM_n(\R) : V \subset \R^n \text{ is an } (n-2)\text{-dimensional subspace}\}.
\]
For $\mu_*$-a.e. $x \in \om$, let $A(x)$ denote the orthogonal projection from $\R^n$ onto $T_x\mu_*$. The varifold associated with $\mu_* \llcorner \om$ is the push-forward
\[
V_* := (\op{Id}, A)_\#(\mu_* \llcorner \om) \in (C_0^0(\om \times \bG(n,n-2)))',
\]
or equivalently, for any Borel set $E \subset \om \times \bG(n,n-2)$,
\[
V_*(E) := \mu_*(\{x \in \om : (x, A(x)) \in E\}).
\]
We show that the varifold $V_*$ associated with $\mu_*$ is stationary.

\begin{prop}\label{stationarity}
The varifold $V_*$ associated with $\mu_*$ is stationary, i.e.,
\be
\int_{\om} A_{jk}(x) \pa_k \vp_j(x) \ud \mu_*(x) = 0 \label{Ajkpakvpj}
\ee
for every $\vp \in C_0^1(\om, \R^n)$.
\end{prop}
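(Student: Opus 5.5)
The plan is the standard route of \cite{BBO01, LR99}: pass to the limit in the stress-energy identity \eqref{stressidentity}. Test \eqref{stressidentity} against $\vp\in C_0^1(\om,\R^n)$ and integrate by parts. This gives, for every $i$,
\[
\int_{\om}\Big(e_{\va_i}(u_{\va_i})\delta_{jk}-\pa_ju_{\va_i}:\pa_ku_{\va_i}\Big)\pa_k\vp_j\,\ud x=0 .
\]
Divide by $|\log\va_i|$ and define the matrix-valued measures
\[
\ud\mu^{jk}_{\va}:=\f{1}{|\log\va|}\Big(e_\va(u_\va)\delta_{jk}-\pa_ju_\va:\pa_ku_\va\Big)\ud x .
\]
By \eqref{assumptionbound}, their total variations are bounded by $C\mu_\va$. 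After a further subsequence, $\mu^{jk}_{\va_i}\wc^*\mu^{jk}_*$, and $\mu^{jk}_*\ll\mu_*$. We therefore write $\mu^{jk}_*=B_{jk}\,\mu_*$ with $B$ a symmetric matrix field satisfying $|B|\le C$. The limit identity is $\int_\om B_{jk}\pa_k\vp_j\,\ud\mu_*=0$. It remains to show $B=A$, i.e.\ $B(x)=P_{T_x\mu_*}$, for $\mu_*$-a.e.\ $x\in S_*\cap\om$.

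Next I record the pointwise information on $B$. First, the potential contribution is negligible. The Pohozaev identity \eqref{Pohozaev} together with the monotonicity formula \eqref{MonotonicityFormula} gives
\[
\f{1}{|\log\va|}\int_{B_r(x)}\f{f(u_\va)}{\va^2}\,\ud x\le Cr\,\f{\ud}{\ud r}\big(r^{2-n}E_\va\big)r^{n-2}/|\log\va| .
\]
Integrating in $r$ shows that this term tends to $0$ in the limit after averaging. This uses the standard argument that the density ratio increments are summable, so the potential part of the limit vanishes off a $\mu_*$-null set. Hence $B$ is the weak limit of $\f{1}{|\log\va|}(\f12|\na u|^2\delta_{jk}-\pa_ju:\pa_ku)$. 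From this form, $\tr B=\f{n-2}{2}\cdot\f{2}{1}\cdot\f{|\na u|^2}{2}$-normalized, which gives $\tr B = n-2$ after normalization. In addition $B\le I$, since $\pa_ju:\pa_ku$ is nonnegative definite, and $\mu^{jk}_*$ is a weak limit of such matrices. So $B$ is symmetric with $\tr B=n-2$ and $B\le I$.

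The key step, and the main obstacle, is to show that $B$ has all eigenvalues in $[0,1]$ and equals the projection onto $T_x\mu_*$. I plan a blow-up at $\mu_*$-a.e.\ $x$. Rescale $\mu_*$ around $x$ using \eqref{tangentplane}. The blow-ups converge to $\Theta\,\HH^{n-2}\llcorner T_x\mu_*$, and $B$ converges (by Lebesgue points of $B$ with respect to $\mu_*$) to the constant matrix $B(x)$. The limit identity for this tangent measure reads
\[
\int_{V}B_{jk}(x)\pa_k\vp_j\,\ud\HH^{n-2}=0\quad\text{for all }\vp,
\]
where $V:=T_x\mu_*$. Choose $\vp=\psi(y)\,e_j\,(y\cdot e_\ell)$ with $e_\ell\perp V$. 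Since $y\cdot e_\ell=0$ on $V$ but its derivative is $e_\ell$, this forces $B_{j\ell}(x)=0$ for every $\ell$ normal to $V$. Hence $B(x)$ maps $V$ into $V$ and annihilates $V^\perp$. Testing with tangential fields $\vp$ and integrating by parts on $V$ adds no further constraint; the earlier normal-direction choice is what matters.

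To conclude $B|_V=\mathrm{Id}_V$, combine the two facts $\tr B=n-2=\dim V$ and $B\le I$. A symmetric matrix on an $(n-2)$-dimensional space with trace $n-2$ and all eigenvalues at most $1$ must equal the identity. Therefore $B(x)=P_V=A(x)$, and \eqref{Ajkpakvpj} follows. The delicate points are two: justifying that the potential term and the trace normalization pass to the limit correctly, which relies on the equipartition-type control from \eqref{MonotonicityFormula}, and justifying the blow-up of the $\mu^{jk}_*$ measures at $\mu_*$-Lebesgue points of $B$.
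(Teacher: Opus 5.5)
Your argument is correct and follows the same overall strategy as the paper: pass to the limit in \eqref{stressidentity}, write the limit tensor as $B\,\mu_*$, blow up at $\mu_*$-Lebesgue points of $B$ where $T_x\mu_*$ exists, and finish with a trace/eigenvalue count. There are two differences worth noting.

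First, for the identification $B(x)=P_{T_x\mu_*}$ you test the blown-up identity directly with $\psi(y)(y\cdot e_\ell)e_j$, where $e_\ell\perp V$ and $\int_V\psi\neq 0$. This gives $B(x)e_\ell=0$. The paper instead cites \cite[Lemma~3.9]{AS97} to get two vanishing eigenvalues, and then \cite[Theorem~3.3]{AS97} to match the resulting plane with $T_x\mu_*$. Your computation is more elementary and makes the second citation superfluous, since the plane is already fixed by \eqref{tangentplane}.

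Second, you work to show that the potential part $\frac{f(u_\va)}{\va^2|\log\va|}\,\ud x$ has zero weak limit, in order to get $\tr B=n-2$ exactly. The claim is true, but your sketch of it, and the line computing $\tr B$, are not coherent as written. A correct version goes as follows. The monotonicity formula gives $\int_0^R r^{1-n}\xi_*(B_r(x))\,\ud r\le\frac12R^{2-n}\mu_*(\ol{B}_R(x))$ for any weak limit $\xi_*\le\mu_*$ of these measures. Since $\mu_*$ has positive $(n-2)$-density $\mu_*$-a.e. (Lemma~\ref{Thetamu0eta0}), the density $\ud\xi_*/\ud\mu_*$ must then vanish $\mu_*$-a.e.

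The step is, however, unnecessary. Pointwise, the trace of your matrix density is
\[
\frac{1}{|\log\va|}\bigl(n\,e_\va(u_\va)-|\na u_\va|^2\bigr)=\frac{1}{|\log\va|}\Bigl(\frac{n-2}{2}|\na u_\va|^2+\frac{n}{\va^2}f(u_\va)\Bigr)\ge(n-2)\frac{\ud\mu_\va}{\ud x},
\]
so $\tr B\ge n-2$ in the limit. A symmetric operator on the $(n-2)$-dimensional space $V$ with all eigenvalues at most $1$ and trace at least $n-2$ is already $\mathrm{Id}_V$. This one-sided bound is what the paper uses. Your route additionally shows that the potential energy carries no mass in $\mu_*$, which is true but not needed for stationarity.
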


\begin{proof}
The argument follows the standard strategy of \cite{AS97}; see also
\cite[Theorem IX.1]{BBO01} for an analogous treatment of the complex
Ginzburg--Landau model. Define the matrix-valued map $A^\va = (A_{jk}^\va) : \om \to \MM_n(\R)$ by
\[
A_{jk}^\va:= \frac{1}{|\log\va|}(e_{\va}(u_{\va})\delta_{jk} - \pa_j u_{\va}:\pa_k u_{\va}),
\quad j, k \in \Z \cap [1,n].
\]
This tensor satisfies
\be
(A^\va)^{\op{T}} = A^\va,\quad
\tr A^\va= \frac{1}{|\log\va|}(ne_{\va}(u_{\va})-|\na u_\va|^2)
\geq (n-2)\frac{\ud\mu_{\va}}{\ud x},
\quad
|A^\va| \leq C\frac{\ud\mu_{\va}}{\ud x}, \label{Avaproperty}
\ee
where $C = C(n) > 0$. For any unit vector $v \in \Ss^{n-1}$, we have
\[
A^\va[v,v]
:= A_{jk}^\va v_j v_k
= \frac{1}{|\log\va|}(e_\va(u_\va) - |v \cdot \na u_\va|^2)
\leq \frac{\ud\mu_\va}{\ud x}.
\]
Testing \eqref{stressidentity} with $\vp \in C_0^1(\om, \R^n)$ gives
\be
\int_\om A_{jk}^\va(x) \pa_k \vp_j(x) \ud x = 0. \label{Avavpj}
\ee

Set $\va = \va_i$. By \eqref{assumptionbound} and \eqref{Avaproperty}, passing to a subsequence if necessary,
\be
A^{\va_i} \ud x \wc^* A^0
\quad \text{in } (C_0^0(\om, \MM_n(\R)))'. \label{Availimit}
\ee
Since $|A^0| \leq C \mu_* \llcorner \om$, the measure $A^0$ is absolutely
continuous with respect to $\mu_* \llcorner \om$. Hence there exists
$A^* \in L^1(\om, \MM_n(\R); \mu_*)$ such that
$A^0 = A^*(x)\mu_* \llcorner \om$ as measures in $(C_0^0(\om, \MM_n(\R)))'$.

Passing to the limit $\va_i \to 0^+$ in \eqref{Avavpj} via \eqref{Availimit},
and using \eqref{Avaproperty}, we obtain: for $\mu_*$-a.e. $x \in \om$,
\be
(A^*(x))^{\op{T}} = A^*(x), \quad
\tr A^*(x) \geq n-2, \quad
\max_{i \in \Z \cap [1,n]} \lambda_i(A^*(x)) \leq 1, \label{Astareigenvalues}
\ee
where $\{\lambda_i(A^*(x))\}_{i=1}^n$ are the eigenvalues of $A^*(x)$, and
\be
\int_\om A^*_{jk}(x) \pa_k \vp_j(x) \ud \mu_*(x) = 0 \label{Astarmustar}
\ee
for every $\vp \in C_0^1(\om, \R^n)$.

We now identify $A^*$ with the projection $A$ onto the approximate tangent space of~$\mu_*$. Let $x_0 \in \om$ be a Lebesgue point of $A^*$ with respect to $\mu_*$ at which the approximate tangent space $T_{x_0}\mu_*$ exists. By Proposition \ref{proprectifiable} and the analysis at the beginning of this subsection, this holds for $\mu_*$-a.e. $x_0 \in \om$. Fix $\lambda \in (0, \dist(x_0, \pa\om))$. By \eqref{Astarmustar}, we have
\[
\lambda^{2-n}
\int_{\R^n} A_{jk}^*(x)\pa_k \vp_j\left(\frac{x - x_0}{\lambda}\right)
\ud\mu_*(x) = 0
\]
for any $\vp \in C_0^1(B_1, \R^n)$. Letting $\lambda \to 0^+$ and applying \eqref{tangentplane}, we obtain
\be
\Theta^{n-2}(\mu_*, x_0) A_{jk}^*(x_0)
\int_{T_{x_0}\mu_*} \pa_k \vp_j \ud \HH^{n-2} = 0
\label{Thetamux0}
\ee
for every $\vp \in C_0^1(B_1, \R^n)$. By Lemma \ref{Thetamu0eta0},
$\Theta^{n-2}(\mu_*, x) > 0$ for $\mu_*$-a.e. $x \in \om$, so \eqref{Thetamux0}
forces
\[
A_{jk}^*(x_0) \int_{T_{x_0}\mu_*} \pa_k \vp_j \ud \HH^{n-2} = 0.
\]
By \cite[Lemma 3.9]{AS97}, this implies that at least two eigenvalues of $A^*(x)$ vanish for $\mu_*$-a.e. $x \in \om$. Combining this information with \eqref{Astareigenvalues}, we deduce that the eigenvalues of $A^*(x)$ are one, with multiplicity~$n-2$, and zero, with multiplicity~$2$. Therefore, $ A^*(x) $ is the orthogonal projection onto an $(n-2)$-dimensional subspace of $\R^n$. Since $\Theta^{n-2}(\mu_*, x) > 0$ for $\mu_*$-a.e. $x \in \om$, \cite[Theorem 3.3]{AS97} and \eqref{Astarmustar} imply that the varifold $\wt{V} := (\op{Id}, A^*)_\#(\mu_* \llcorner \om)$ is $(n-2)$-rectifiable. It follows that the $(n-2)$-dimensional subspace onto which $A^*(x)$ projects must coincide with $T_x\mu_*$ for $\mu_*$-a.e. $x \in \om$, i.e., $A^*(x) = A(x)$ for $\mu_*$-a.e. $x\in\om$. Substituting into \eqref{Astarmustar} yields \eqref{Ajkpakvpj}, and the stationarity of $V_*$ follows.
\end{proof}

\section{Energy quantization and the proof of Theorem \ref{maintheorem}}\label{SectionSingularset1prime}

The goal of this section is to establish the energy quantization property stated in Proposition \ref{densitydescrete} and to complete the proof of Theorem \ref{maintheorem}. We begin with the quantization of the density function on the singular set $S_*$. We recall that~$[\Ss^1, \cN]$ is the set of free homotopy classes of continuous maps~$\Ss^1\to\cN$, as defined in Definition \ref{definitionfreehomotopyclass}, and that $\abs{\,\cdot\,}_*$ is the norm-like function defined in Definition \ref{DefinitionEsg}.

\begin{prop}\label{densitydescrete}
Under the same assumptions as in Theorem \ref{maintheorem}, for $\HH^{n-2}$-a.e. $x \in S_*$, we have $\Theta^{n-2}(\mu_*, x) \in \{|\sg|_* : \sg \in [\Ss^1, \cN]\}$.
\end{prop}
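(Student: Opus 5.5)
We will prove the quantization by a blow-up at a generic point of $S_*$, where the density is computed from a two-dimensional lower bound and a matching upper bound, both tied to the homotopy class of the limit map around the tangent plane.

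\textbf{Choice of the point and blow-up.} Fix $x_0\in S_*\cap\om$ at which the approximate tangent plane $T_{x_0}\mu_*$ exists (after a rotation, $T_{x_0}\mu_*=\{0^2\}\times\R^{n-2}$), and at which $\Theta:=\Theta^{n-2}(\mu_*,x_0)$ is given by \eqref{tangentplane}. This holds for $\HH^{n-2}$-a.e. point by Proposition \ref{proprectifiable}. Given $\lambda\to0^+$, the rescaled measures $\mu_{*,x_0,\lambda}$ converge to $\Theta\,\HH^{n-2}\llcorner T_{x_0}\mu_*$. By a diagonal argument we choose $\va_i$ and $\lambda_i$ with $\va_i/\lambda_i\to0$ and $|\log(\va_i/\lambda_i)|/|\log\va_i|\to1$. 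The rescaled minimizers $\hat u_i(y):=u_{\va_i}(x_0+\lambda_i y)$ are local minimizers of $E_{\hat\va_i}$ with $\hat\va_i=\va_i/\lambda_i$. Their normalized energy measures converge to $\Theta\,\HH^{n-2}\llcorner(\{0\}\times\R^{n-2})$ on $B_4$.

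\textbf{Homotopy class and lower bound.} Consider the cylinder $\Lambda_{1,1}=B_1^2\times B_1^{n-2}$ and the shell $(B_1^2\setminus B_{\delta}^2)\times B_1^{n-2}$. The mass of the limit measure vanishes on this shell. Hence Lemma \ref{CUleq} and Proposition \ref{propombackSstar}, applied to $\hat u_i$, give uniformly bounded energy there. Moreover $\hat u_i\to \hat u_*$ in $H^1_{\loc}$, where $\hat u_*$ is $\cN$-valued off $\{0\}\times\R^{n-2}$. By Fubini choose $\rho\in(\delta,1)$ such that $\hat u_i|_{\Ga_{\rho,1}}$ has bounded energy, and let $\sg:=[\hat u_*|_{\Ga_{\rho,1}}]_\cN$, which is well defined by Lemma \ref{H1homotopyclass}. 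For a.e. $z\in B_1^{n-2}$ the slice $\hat u_i(\cdot,z)$ on $\pa B_\rho^2$ is close to $\cN$ (for a subset of $z$ of nearly full measure, by the $C^0$ convergence away from $\sing(\hat u_*)$, which has codimension $3$) and is in class $\sg$. Lemma \ref{LowerBound}, integrated in $z$, then yields
\[
\liminf_i \frac{E_{\hat\va_i}(\hat u_i,\Lambda_{\rho,1})}{|\log\hat\va_i|}\ge |\sg|_*\,\w_{n-2}.
\]
This gives $\Theta\ge|\sg|_*$.

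\textbf{Upper bound by comparison.} We construct a competitor in $\Lambda_{\rho,r}$ for suitable $r\in(1/2,1)$. Using Lemma \ref{Luckhauslemma1} (with $\delta\simeq\rho$, whose smallness hypothesis \eqref{GvaboundaryH10} holds because the boundary energy on $\Ga_{\rho,1}$ is $O(1)\ll\log$), we interpolate $\hat u_i$ to an $\cN$-valued map $v_i$ on a slightly smaller cylinder boundary, at cost $O(1)$. We then fill in with Lemma \ref{cylinderextension1}, which costs $|\sg|_*\HH^{n-2}(B_r^{n-2})\log(\rho/\hat\va_i)+O(1)$. On the lateral top/bottom $B_\rho^2\times\pa B_r^{n-2}$ the values must match $\hat u_i$. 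To handle this, we choose $r$ by Fubini so that the energy of $\hat u_i$ on $B_\rho^2\times\pa B_r^{n-2}$ is about $\Theta\HH^{n-3}(\pa B_r^{n-2})|\log\hat\va_i|$. We then glue in a thin layer $B_\rho^2\times(B_r^{n-2}\setminus B_{r-\tau}^{n-2})$ using \eqref{LdarLestimate1} and \eqref{WpluesminusC11}, which costs $O(\tau|\log\hat\va_i|)$. Minimality and $\tau\to0$ give $\Theta\le|\sg|_*$, hence $\Theta=|\sg|_*$.

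\textbf{Main obstacle.} The delicate step is the lateral gluing: both boundary data carry logarithmic energy, so a naive interpolation is not cheap. We would try first a linear interpolation in the normal $z$-direction inside the thin layer, with the layer thickness $\tau$ chosen after $i\to\infty$. Good slices for both maps are chosen via Fubini, and the $\log$-term is controlled by the bound on $B_\rho^2\times\pa B_r^{n-2}$.
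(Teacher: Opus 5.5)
The gap is exactly where you place the main obstacle, and it is not a technicality. With $\rho$ fixed and only the layer $B_\rho^2\times(B_r^{n-2}\setminus B_{r-\tau}^{n-2})$ thin, nothing in your argument produces a connection of cost $O(\tau|\log\hat{\va}_i|)$. The estimates \eqref{LdarLestimate1} and \eqref{WpluesminusC11} bound the energies of the two traces to be joined, each of order $|\log\hat{\va}_i|$; they do not bound the cost of joining them. Take the linear interpolation $w=(1-s/\tau)a+(s/\tau)b$, with $a$ the trace of $\hat{u}_i$ on $B_\rho^2\times\pa B_r^{n-2}$ and $b$ the trace of your competitor. The gradient terms are harmless: the normal derivative gives $\tau^{-1}\|a-b\|_{L^2}^2=O(\tau^{-1})$ by the $L^\infty$ bound. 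The potential term is not. The map $b$ is built from a single slice, the collapsing map of Lemma~\ref{cylinderextension1}, and the Luckhaus layer, so it bears no relation to $\hat{u}_i$. Wherever $a$ and $b$ lie near different points of $\cN$, there is no reason for the segment between them to stay near $\cN$. On such a set, $\hat{\va}_i^{-2}f(w)$ contributes an amount of order $\tau\hat{\va}_i^{-2}\gg|\log\hat{\va}_i|$. A cone-type extension over the three-dimensional section $B_\rho^2\times[r-\tau,r]$ does not rescue this either. The estimate ``energy $\le C\cdot\mathrm{diam}\cdot$ boundary energy'' has a constant depending on the bi-Lipschitz distortion of the section to a ball, and this distortion blows up as $\tau/\rho\to0$. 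Matching the vortex structures of $a$ and $b$ by hand would need fine information on $\hat{u}_i$ that you do not have.

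The missing idea, which is how the paper closes the argument in Lemma~\ref{chooseseveralcases}, is to make the \emph{cylinder} thin, with a top/bottom layer whose thickness is comparable to its radius. One works on $\Lda_{\delta,1}$ with $\delta\to0$. The vanishing of the tangent measure off $V_{x_0}$ gives, after rescaling, $E_{\ol{\va}}(\cdot,\Ga_{\delta,1})\le\delta^{n-3}\eta\log\frac{\delta}{\ol{\va}}$, which is hypothesis \eqref{GvaboundaryH10} of Lemma~\ref{Luckhauslemma1}. Your clearing-out bound on the shell would serve equally well once $\rho\sim\delta$. The layer $B_\delta^2\times(B_1^{n-2}\setminus B_r^{n-2})$ has thickness $1-r\in(\delta/2,\delta)$, so its section $B_\delta^2\times[r,1]$ is uniformly bi-Lipschitz to $B_\delta^3$. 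A cone-type extension there, fibrewise over $\Ss^{n-3}$, costs $C\delta$ times the boundary energy. This needs only a crude $O(\log\frac{\delta}{\ol{\va}})$ bound on the top/bottom energy, obtained from monotonicity and Fubini as in \eqref{upperlowb}. The error is therefore $O(\delta\log\frac{\delta}{\ol{\va}})$, while the main term $|\sg|_*\HH^{n-2}(B_1^{n-2})\log\frac{\delta}{\ol{\va}}$ does not depend on $\delta$. One then lets $\delta=\eta\to0$ after $i\to\infty$, using that $\{|\sg|_*:\sg\in[\Ss^1,\cN]\}$ is finite.

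A secondary issue: the class entering your upper bound through Lemma~\ref{cylinderextension1} is $[v_i]_{\cN}$, the class of the Luckhaus map. Your lower bound uses the class of $\hat{u}_*$, and you have not shown the two agree. The simplest fix is the paper's: take $\sg:=[v_i]_{\cN}$. Then apply Lemma~\ref{LowerBound} slicewise to the map obtained by radially compressing $\hat{u}_i$ into the inner half of the cylinder and appending the Luckhaus interpolation, whose lateral trace is exactly $v_i$. This also makes your slice-by-slice $C^0$-convergence argument unnecessary.
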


The following lemma is the key technical ingredient in the proof of Proposition \ref{densitydescrete}. It asserts that, under suitable energy bounds on the boundary data, the energy of a minimizer on a cylinder is quantized and depends on the homotopy class in $ [\Ss^1,\cN] $ of the boundary datum, up to a controlled error. Recall that for $ r,L>0 $, $\Lda_{r,L}$ and $\Ga_{r,L}$ are defined in \eqref{LdaGarLdef}.

\begin{lem}\label{chooseseveralcases}
Let $\delta \in (0, \frac{1}{10}]$. Assume that for each $\ol{\va} \in (0,1)$, the map $g_{\delta,\ol{\va}} \in H^1(\pa\Lda_{\delta,1}, \R^m)$ satisfies
\begin{align}
\|g_{\delta,\ol{\va}}\|_{L^{\ift}(\pa\Lda_{\delta,1},\R^m)} &\leq M,
\label{fGboundboundfinal} \\
E_{\ol{\va}}(g_{\delta,\ol{\va}}, B_\delta^2 \times \pa B_1^{n-2})
&\leq M \log\frac{\delta}{\ol{\va}}, \label{GvaboundaryH1final}
\end{align}
for some constant $M > 0$. There exist constants $\eta_1, \wh{\va}_1 \in (0,1)$,
depending only on $f$, $M$, $\cN$, and $n$, such that for any $\eta \in (0, \eta_1)$
and $\ol{\va} \in (0, \wh{\va}_1 \delta)$, if
\be
E_{\ol{\va}}(g_{\delta,\ol{\va}}, \Ga_{\delta,1})
\leq \delta^{n-3} \eta \log\frac{\delta}{\ol{\va}}, \label{GvaboundaryH1final1}
\ee
and $u_{\delta,\ol{\va}}$ is a global minimizer of $E_{\ol{\va}}(\cdot,\Lda_{\delta,1})$ with boundary condition $u_{\delta,\ol{\va}} = g_{\delta,\ol{\va}}$ on $\pa\Lda_{\delta,1}$, then there exists $\sg \in [\Ss^1, \cN]$ such that
\be
\left| E_{\ol{\va}}(u_{\delta,\ol{\va}}, \Lda_{\delta,1})
- |\sg|_* \HH^{n-2}(B_1^{n-2}) \log\frac{\delta}{\ol{\va}} \right|
\leq \al(M, \delta, \eta) \log\frac{\delta}{\ol{\va}} + C,
\label{EolvaapproximateEstar}
\ee
where $C > 0$ depends only on $f$, $M$, $\cN$, $n$, and
\[
\al(M, \delta, \eta) \leq C(\delta + \delta^{n-4}\eta).
\]
\end{lem}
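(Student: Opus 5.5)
We prove the upper and lower bounds in \eqref{EolvaapproximateEstar} separately. Both use the Luckhaus-type Lemma~\ref{Luckhauslemma1} applied to the lateral datum $g_{\delta,\ol{\va}}|_{\Ga_{\delta,1}}$. Fix $\eta_1\le$ the constant of Lemma~\ref{Luckhauslemma1} and $\wh{\va}_1$ accordingly. Hypothesis \eqref{GvaboundaryH1final1} then gives $r\in(1-\delta,1-\f{\delta}{2})$, an $\cN$-valued map $v:=v_{\delta,\ol{\va}}$ on $\pa B^2_{\delta}\times B^{n-2}_r$ (identified with $\pa B^2_{\delta/2}\times B_r^{n-2}$), and an interpolation $\vp:=\vp_{\delta,\ol{\va}}$ on the annular shell. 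These satisfy \eqref{Vvaes1}--\eqref{WpluesminusC11}. We define $\sg:=[v]_{\cN}$, which is well defined by Lemma~\ref{H1homotopyclass}.

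\textbf{Upper bound.} Build a competitor on $\Lda_{\delta,1}$ in three pieces.
\begin{enumerate}
\item On $(B^2_\delta\backslash B^2_{\delta/2})\times B_r^{n-2}$, put $\vp$. Its energy is at most $C\delta\cdot\delta^{n-3}\eta\log\f{\delta}{\ol{\va}}$.
\item On $B^2_{\delta/2}\times B^{n-2}_r$, put the map from Lemma~\ref{cylinderextension1} with $r=\delta/2$ and $L=r$. By \eqref{LdarLestimate} its energy is at most $|\sg|_*\HH^{n-2}(B_1^{n-2})\log\f{\delta}{\ol{\va}}+C\delta^{-1}\delta^{n-3}\eta\log\f{\delta}{\ol{\va}}+C$. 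This error is $C\delta^{n-4}\eta\log\f{\delta}{\ol{\va}}$, the origin of that term in $\al$.
\item On the remaining slab $B^2_\delta\times(B_1^{n-2}\backslash B_r^{n-2})$, which has thickness $\sim\delta$, interpolate. Its boundary data are $g$ on $B^2_\delta\times\pa B^{n-2}_1$, $g$ on the lateral part, and, on $B_\delta^2\times\pa B_r^{n-2}$, the trace of $\vp$ (controlled by \eqref{WpluesminusC11}) together with the trace of the inner extension (controlled by \eqref{LdarLestimate1}). Using a bi-Lipschitz identification of this region with a product of $B^2_\delta\times\pa B^{n-2}_1$ with an interval of length $\sim\delta$, linear/homogeneous interpolation costs about $\delta$ times the boundary energies. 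By \eqref{GvaboundaryH1final} this is $\le C\delta M\log\f{\delta}{\ol{\va}}$, plus the $|\sg|_*\HH^{n-3}\cdot\delta\log$ term.
\end{enumerate}
Minimality of $u_{\delta,\ol{\va}}$ then gives $E\le|\sg|_*\HH^{n-2}(B_1^{n-2})\log\f{\delta}{\ol{\va}}+C(\delta+\delta^{n-4}\eta)\log\f{\delta}{\ol{\va}}+C$. The factor $\HH^{n-2}(B_r^{n-2})$ differs from $\HH^{n-2}(B_1^{n-2})$ by $O(\delta)$, which is also absorbed.

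\textbf{Lower bound.} Slice $\Lda_{\delta,1}$ into discs $B^2_\delta\times\{z\}$, $z\in B^{n-2}_r$. Append the collar $\vp$ to $u_{\delta,\ol{\va}}$: rescale $u$ into $B^2_{\delta/2}\times\{z\}$ and glue $\vp$ outside, or equivalently apply Lemma~\ref{LowerBound} to the glued map on $B^2_\delta$ with trace $v(\cdot,z)$, noting that $\vp$ interpolates between $g$ and $v$. More directly: consider the map on $B^2_{\delta}\times\{z\}$ obtained by applying Lemma~\ref{LowerBound} on $B^2_{3\delta/2}$ to $u$ extended by the reflected collar $\vp$ outside radius $\delta$. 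Its trace lies on $\cN$ with class $\sg$ for a.e.\ $z$. For a.e.\ $z$ this gives
\[
E_{\ol{\va}}(u,B^2_\delta\times\{z\})+E_{\ol{\va}}(\vp,\text{slice})+C\delta E(v,\text{slice})\ge|\sg|_*\log\f{\delta}{\ol{\va}}-C.
\]
Integrating over $z\in B^{n-2}_r$ and using \eqref{Vvaes1}, \eqref{Wvaest1}, the error terms are at most $C(\delta+1)\delta^{n-3}\eta\log\f{\delta}{\ol{\va}}+C$, which is within $\al$. The loss $\HH^{n-2}(B_1^{n-2}\backslash B^{n-2}_r)=O(\delta)$ is also absorbed.

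The main obstacle is step 3 of the upper bound, the gluing region near $\pa B^{n-2}_1$. There the data on the top/bottom are only logarithmically bounded by \eqref{GvaboundaryH1final}, and we must ensure the patch costs only $O(\delta\log)$. This requires that the trace estimates \eqref{WpluesminusC11} and \eqref{LdarLestimate1} on $\pa B_r^{n-2}$ are compatible and that the interpolation layer has width comparable to $\delta$; otherwise we would need a Fubini choice of $r$ making $E(g,B_\delta^2\times\pa B^{n-2}_{\rho})$ controlled as well.
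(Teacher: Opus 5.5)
Your proposal is correct and follows the paper's proof essentially step for step. For the upper bound it uses the same three-piece competitor: the collar $\vp_{\delta,\ol{\va}}$, Lemma~\ref{cylinderextension1} on $B^2_{\delta/2}\times B^{n-2}_r$, and an extension on the slab $B^2_\delta\times(B^{n-2}_1\backslash B^{n-2}_r)$. For the lower bound it uses the same glued map (rescaled $u_{\delta,\ol{\va}}$ plus reflected collar, with $\cN$-valued trace $v_{\delta,\ol{\va}}$), sliced with Lemma~\ref{LowerBound}.

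The one point to make precise is your step 3. There the paper uses a $0$-homogeneous (cone) extension in the three variables of $B^2_\delta\times[r,1]$, identified bi-Lipschitz with $B^3_\delta$, fiberwise over $\Ss^{n-3}$. Linear interpolation in the normal variable alone would match neither the lateral datum nor control the potential. The cone extension costs $C\delta$ times the total boundary energy. Since $1-r\in(\f{\delta}{2},\delta)$ is already supplied by Lemma~\ref{Luckhauslemma1}, no extra Fubini selection is needed.
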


\begin{proof}
Applying Lemma \ref{Luckhauslemma1}, we fix $(\eta_1, \wh{\va}_1) =
(\eta_1, \wh{\va}_1)(f, M, \cN, n) \in (0,1)$ such that for any
$\eta \in (0, \eta_1)$ and $\ol{\va} \in (0, \wh{\va}_1\delta)$, there exist
$r \in (1-\delta, 1-\frac{\delta}{2})$, together with maps
$v_{\delta,\ol{\va}} \in H^1(\pa B_\delta^2 \times B_r^{n-2}, \cN)$,
$\vp_{\delta,\ol{\va}} \in H^1((B_\delta^2 \backslash B_{\frac{\delta}{2}}^2)\times B_r^{n-2}, \R^m)$, and $\vp_{\delta,\ol{\va}}^* \in H^1((B_\delta^2 \backslash B_{\frac{\delta}{2}}^2)\times \pa B_r^{n-2}, \R^m)$,
satisfying estimates \eqref{Vvaes1}, \eqref{Wvaest1}, and \eqref{WpluesminusC11}, respectively. By Lemma \ref{H1homotopyclass}, the homotopy class $[v_{\delta,\ol{\va}}]_\cN$ is well-defined; we denote it by $\sg \in [\Ss^1, \cN]$. We establish the estimate \eqref{EolvaapproximateEstar} in two steps, proving the upper and lower bounds of $E_{\ol{\va}}(u_{\delta,\ol{\va}}, \Lda_{\delta,1})$
in turn.

\smallskip
\noindent\textbf{Step 1. Upper bound on $E_{\ol{\va}}(u_{\delta,\ol{\va}},\Lda_{\delta,1})$.}
We introduce the decomposition of $ \Lda_{\delta,1} $ by
\[
\Lda_{\delta,1}=\Lda_{\delta}\cup\Lda_{\delta}^0\cup E_{\delta},
\]
where
\begin{align*}
\Lda_\delta &:= B_\delta^2 \times (B_1^{n-2} \backslash B_r^{n-2}), \\
\Lda_\delta^0 &:= B_{\frac{\delta}{2}}^2 \times B_r^{n-2}, \\
E_\delta &:= (B_\delta^2 \backslash B_{\frac{\delta}{2}}^2) \times B_r^{n-2}.
\end{align*}
Since $v_{\delta,\ol{\va}}\in H^1(\pa B_\delta^2 \times B_r^{n-2}, \cN)$ and $\ol{\va} \in (0, \wh{\va}_1\delta) \subset (0, \frac{\delta}{2})$, we apply Lemma \ref{cylinderextension1} to $g(\cdot) = v_{\delta,\ol{\va}}(2\cdot)$ in
$\Lda_{\frac{\delta}{2}, r}$. This yields a map $\wh{u}_{\delta,\ol{\va}}^{(0)}$
with $\wh{u}_{\delta,\ol{\va}}^{(0)}|_{\pa B_{\frac{\delta}{2}}^2 \times B_r^{n-2}}
= v_{\delta,\ol{\va}}(2\cdot)$, satisfying
\be
\begin{aligned}
E_{\ol{\va}}(\wh{u}_{\delta,\ol{\va}}^{(0)}, \Lda_\delta^0)
&\leq Cr\left(\frac{r}{\delta} + \frac{\delta}{r}\right)
\|\na_{\Ga_{\delta,1}} v_{\delta,\ol{\va}}\|_{L^2(\Ga_{\delta,r})}^2
+ |\sg|_* \HH^{n-2}(B_r^{n-2}) \log\frac{\delta}{2\ol{\va}} + Cr^{n-2} \\
&\leq Cr\left(\frac{r}{\delta} + \frac{\delta}{r}\right)
E_{\ol{\va}}(v_{\delta,\ol{\va}}, \Ga_{\delta,r})
+ |\sg|_* \HH^{n-2}(B_1^{n-2}) \log\frac{\delta}{\ol{\va}} + C \\
&\leq (|\sg|_* \HH^{n-2}(B_1^{n-2})
+ (\delta + \delta^{-1})\delta^{n-3}\eta) \log\frac{\delta}{\ol{\va}} + C
\end{aligned}\label{inteU0hat}
\ee
and
\be
\begin{aligned}
E_{\ol{\va}}(\wh{u}_{\delta,\ol{\va}}^{(0)}, B_{\frac{\delta}{2}}^2
\times \pa B_r^{n-2})
&\leq C\left(\frac{r}{\delta} + \frac{\delta}{r}\right)
\|\na_{\Ga_{\delta,r}} v_{\delta,\ol{\va}}\|_{L^2(\Ga_{\delta,r})}^2
+ |\sg|_* \HH^{n-3}(\pa B_r^{n-2}) \log\frac{\delta}{2\ol{\va}} + Cr^{n-3} \\
&\leq (|\sg|_* \HH^{n-3}(\pa B_1^{n-2})
+ C(\delta^{-1} + \delta)\delta^{n-3}\eta)\log\frac{\delta}{\ol{\va}} + C.
\end{aligned}\label{zplusmiunsU0}
\ee
We now construct a competitor map on $\Lda_{\delta,1}$. Define
$\wt{g}_{\delta,\ol{\va}} \in H^1(\pa\Lda_\delta, \R^m)$ by
\begin{align*}
\wt{g}_{\delta,\ol{\va}}(x) := \begin{cases}
g_{\delta,\ol{\va}}(x)
& \text{if } x \in \pa\Lda_\delta \cap \pa\Lda_{\delta,1}, \\
\vp_{\delta,\ol{\va}}^*(x)
& \text{if } x \in (B_\delta^2 \backslash B_{\frac{\delta}{2}}^2)
\times \pa B_r^{n-2}, \\
\wh{u}_{\delta,\ol{\va}}^{(0)}(x)
& \text{if } x \in B_{\frac{\delta}{2}}^2 \times \pa B_r^{n-2}.
\end{cases}
\end{align*}
Combining \eqref{Vvaes1}, \eqref{Wvaest1}, \eqref{WpluesminusC11},
\eqref{GvaboundaryH1final}, and \eqref{zplusmiunsU0}, we obtain
\be
\begin{aligned}
E_{\ol{\va}}(\wt{g}_{\delta,\ol{\va}}, \pa\Lda_\delta)
&\leq E_{\ol{\va}}(\wh{u}_{\delta,\ol{\va}}^{(0)},
B_{\frac{\delta}{2}}^2 \times \pa B_r^{n-2})
+ E_{\ol{\va}}(\vp_{\delta,\ol{\va}}^*,
(B_\delta^2 \backslash B_{\frac{\delta}{2}}^2) \times \pa B_r^{n-2}) \\
&\quad + E_{\ol{\va}}(g_{\delta,\ol{\va}}, \pa\Lda_\delta \cap \Ga_{\delta,1})
+ E_{\ol{\va}}(g_{\delta,\ol{\va}}, B_\delta^2 \times \pa B_1^{n-2}) \\
&\leq (C + C(\delta^{-1}+\delta)\delta^{n-3}\eta)
\log\frac{\delta}{\ol{\va}}
+ C\delta^{n-3}\eta \log\frac{\delta}{\ol{\va}}
+ M\log\frac{\delta}{\ol{\va}} + C \\
&\leq (C + C(1+\delta^{-1}+\delta)\delta^{n-3}\eta)
\log\frac{\delta}{\ol{\va}} + C.
\end{aligned}\label{Gvadeltaplusminus}
\ee
To extend $\wt{g}_{\delta,\ol{\va}}$ to the interior of $\Lda_\delta$, we exploit the
product structure. Note that
\[
\Lda_\delta = B_\delta^2 \times [1-r,1] \times \Ss^{n-3}\quad\text{and}\quad\pa\Lda_\delta = \pa(B_\delta^2 \times [1-r,1]) \times \Ss^{n-3}.
\]
There exists a bi-Lipschitz map $\Phi: \ol{B_\delta^2 \times [1-r,1]} \to \ol{B}_\delta^3$ with
\[
\|\nabla\Phi\|_{L^\infty(B_\delta^2 \times [1-r,1])}
+ \|\nabla(\Phi^{-1})\|_{L^\infty(B_\delta^3)} \leq C(n).
\]
For $x = (y, z) \in ((B_\delta^2 \times [1-r,1])\backslash\{\Phi^{-1}(0^3)\}) \times \Ss^{n-3}$, we set
\[
\wh{w}_{\delta,\ol{\va}}(y,z)
:= \wt{g}_{\delta,\ol{\va}}\left(\Phi^{-1}\left(\frac{\delta\Phi(y)}{|\Phi(y)|}\right), z\right).
\]
By \eqref{Gvadeltaplusminus}, the energy of this cone-type extension satisfies
\be
E_{\ol{\va}}(\wh{w}_{\delta,\ol{\va}}, \Lda_\delta)
\leq C\delta E_{\ol{\va}}(\wt{g}_{\delta,\ol{\va}}, \pa\Lda_\delta)
\leq C(\delta + (\delta + \delta^2 + 1)\delta^{n-3}\eta)
\log\frac{\delta}{\ol{\va}} + C. \label{U0plusminusinmte}
\ee
We now assemble the global competitor by setting
\begin{align*}
\wh{u}_{\delta,\ol{\va}}(x) := \begin{cases}
\wh{u}_{\delta,\ol{\va}}^{(0)}(x) & \text{if } x \in \Lda_\delta^0, \\
\vp_{\delta,\ol{\va}}(x) & \text{if } x \in E_\delta, \\
\wh{w}_{\delta,\ol{\va}}(x) & \text{if } x \in \Lda_\delta.
\end{cases}
\end{align*}
Combining \eqref{Wvaest1}, \eqref{inteU0hat}, and \eqref{U0plusminusinmte}, we get
\[
E_{\ol{\va}}(\wh{u}_{\delta,\ol{\va}}, \Lda_{\delta,1})
\leq (|\sg|_* \HH^{n-2}(B_1^{n-2})
+ C(\delta + \delta^{2-n}\eta))\log\frac{\delta}{\ol{\va}} + C.
\]
The upper bound in \eqref{EolvaapproximateEstar} then follows from minimality:
\[
E_{\ol{\va}}(u_{\delta,\ol{\va}}, \Lda_{\delta,1})
\leq E_{\ol{\va}}(\wh{u}_{\delta,\ol{\va}}, \Lda_{\delta,1}).
\]

\smallskip
\noindent\textbf{Step 2. Lower bound on $E_{\ol{\va}}(u_{\delta,\ol{\va}},
\Lda_{\delta,1})$.}
To apply the slicing argument of Lemma \ref{LowerBound}, we construct an auxiliary map that incorporates both $u_{\delta,\ol{\va}}$ and the transition map $\vp_{\delta,\ol{\va}}$. In cylindrical coordinates $(\rho, \theta, y) \in [0,\delta] \times \Ss^1 \times B_1^{n-2}$,
define
\begin{align*}
\wh{u}_{\delta,\ol{\va}}^{(1)}(\rho,\theta,y) := \left\{\begin{aligned}
&u_{\delta,\ol{\va}}(2\rho, \theta, y)
& \text{ if }& \rho \in \left[0, \tfrac{\delta}{2}\right),\,\,
\theta \in \Ss^1,\,\, y \in B_r^{n-2}, \\
&\vp_{\delta,\ol{\va}}\left(\tfrac{3\delta}{2} - \rho,\,\,\theta, y\right)
& \text{ if }& \rho \in \left[\tfrac{\delta}{2}, \delta\right),\,\,
\theta \in \Ss^1,\,\, y \in B_r^{n-2}.
\end{aligned}\right.
\end{align*}
Then $\wh{u}_{\delta,\ol{\va}}^{(1)} \in H^1(B_\delta^2 \times B_r^{n-2}, \R^m)$ and
\be
\wh{u}_{\delta,\ol{\va}}^{(1)}(x) = v_{\delta,\ol{\va}}(x)
\quad\text{for } \HH^{n-1}\text{-a.e. } x \in \pa B_\delta^2 \times B_r^{n-2}.
\label{boundaryconUV11}
\ee
Moreover, a direct computation together with \eqref{Wvaest1} gives
\be
\begin{aligned}
E_{\ol{\va}}(\wh{u}_{\delta,\ol{\va}}^{(1)}, \Lda_{\delta,1})
&\leq E_{\ol{\va}}(u_{\delta,\ol{\va}}, \Lda_{\delta,1})
+ E_{\ol{\va}}(\vp_{\delta,\ol{\va}},
(B_\delta^2 \backslash B_{\frac{\delta}{2}}^2) \times B_r^{n-2}) \\
&\leq E_{\ol{\va}}(u_{\delta,\ol{\va}}, \Lda_{\delta,1})
+ C\delta^{n-3}\eta \log\frac{\delta}{\ol{\va}}.
\end{aligned}\label{U1Udeltaminus}
\ee
Applying Fubini's theorem and Lemma \ref{LowerBound} to each slice
$B_\delta^2 \times \{y\}$, we obtain that
\[
E_{\ol{\va}}(\wh{u}_{\delta,\ol{\va}}^{(1)}, B_\delta^2 \times \{y\})
+ C\delta E_{\ol{\va}}(v_{\delta,\ol{\va}}, \pa B_\delta^2 \times \{y\})
\geq |\sg|_* \log\frac{\delta}{\ol{\va}} - C
\]
for $\HH^{n-2}$-a.e. $y \in B_r^{n-2}$. Integrating over $y \in B_r^{n-2}$ yields
\[
E_{\ol{\va}}(\wh{u}_{\delta,\ol{\va}}^{(1)}, \Lda_{\delta,1})
+ C\delta \int_{\pa B_\delta^2 \times B_r^{n-2}} |\na v_{\delta,\ol{\va}}|^2 \ud\HH^2
\geq (|\sg|_* \HH^{n-2}(B_1^{n-2}) - C\delta)\log\frac{\delta}{\ol{\va}} - C.
\]
Combining this with \eqref{Vvaes1}, \eqref{GvaboundaryH1final1}, and
\eqref{U1Udeltaminus}, we conclude that
\[
E_{\ol{\va}}(u_{\delta,\ol{\va}}, \Lda_{\delta,1})
\geq (|\sg|_* \HH^{n-2}(B_1^{n-2})
- C(\delta+1)\delta^{n-3}\eta)\log\frac{\delta}{\ol{\va}} - C,
\]
which gives the lower bound and completes the proof.
\end{proof}

\begin{proof}[Proof of Proposition \ref{densitydescrete}]
Recall that
\[
\mu_{\va_i} := \frac{e_{\va_i}(u_{\va_i})}{|\log\va_i|} \ud x
\wc^* \mu_* := \Theta^{n-2}(\mu_*, x) \HH^{n-2} \llcorner (S_* \cap \om)
\quad \text{in } (C_0^0(\om))'.
\]
Fix a point $x_0 \in S_*$ at which there exists a unique approximate tangent space $V_{x_0} \subset \R^n$ satisfying \eqref{tangentplane}, and at which $\Theta^{n-2}(\mu_*, x_0) > 0$. By Lemma \ref{Thetamu0eta0} and Propositions \ref{proprectifiable} and \ref{stationarity}, the set of such points has full $\HH^{n-2}$-measure in $S_*$.

We introduce the rescaled measures. For $s > 0$, define $\cD_{x_0,s}(\mu_*)$ on $s^{-1}(\om - x_0)$ by
\[
\cD_{x_0,s}(\mu_*)(E) := s^{2-n} \mu_*(x_0 + sE)
\]
for any Borel set $E \subset s^{-1}(\om - x_0)$. By \eqref{tangentplane}, there exists a sequence $s_j \to 0^+$ such that
\be
\cD_{x_0,s_j}(\mu_*) \wc^* \cD_{x_0,0}(\mu_*)
:= \Theta^{n-2}(\mu_*, x_0) \HH^{n-2} \llcorner V_{x_0}
\quad \text{in } (C_0^0(\R^n))' \label{nu0con}
\ee
as $s_j \to 0^+$. Without loss of generality, up to a rotation, we assume $x_0 = 0^n$ and
\[
V_{x_0} = \{x \in \R^n : x_1 = x_2 = 0\}.
\]
Let $\delta, \eta \in (0, \frac{1}{10})$ be parameters to be chosen later. From \eqref{nu0con} we have
\[
\cD_{x_0,0}(\mu_*)((\ol{B}_\delta^2 \backslash B_{\frac{\delta}{2}}^2)\times [-1,1]) = 0.
\]
By passing to a further subsequence, we find $t_i \to 0^+$ such that
\be
\frac{\va_i}{t_i} \to 0^+ \quad \text{and} \quad
\cD_{x_0,t_i}(\mu_{\va_i}) \wc^* \cD_{x_0,0}(\mu_*)
\quad \text{in } (C_0^0(\R^n))'. \label{vaitilimit}
\ee
By \eqref{weakconFG} and \eqref{vaitilimit},
\[
\lim_{i \to +\infty} \cD_{x_0,t_i}(\mu_{\va_i})(
(\ol{B}_\delta^2 \backslash B_{\frac{\delta}{2}}^2) \times [-1,1]) = 0,
\]
which gives
\be
\lim_{i \to +\infty}
\frac{t_i^{2-n} E_{\va_i}(u_{\va_i},
(\ol{B}_{\delta t_i}^2 \backslash B_{\frac{\delta t_i}{2}}^2)
\times [-t_i, t_i])}{|\log\va_i|} = 0. \label{annulusvanish}
\ee
Applying Fubini's theorem to \eqref{annulusvanish}, we select
$\wt{t}_i \in (\frac{t_i}{2}, t_i)$ such that, for any sufficiently large $i$ (depending on $\delta$ and $\eta$),
\[
\frac{E_{\va_i}(u_{\va_i}, \Ga_{\delta\wt{t}_i, t_i})}{|\log\va_i|}
\leq \delta^{n-3} t_i^{n-3} \eta.
\]
Replacing $t_i$ by $\wt{t}_i$ (which we relabel as $t_i$), for any
sufficiently large $i$ we have
\be
E_{\va_i}(u_{\va_i}, \Ga_{\delta t_i, t_i})
\leq \delta^{n-3} t_i^{n-3} \eta \log\frac{\delta t_i}{\va_i}.
\label{nn0geq}
\ee
Using \eqref{assumptionbound} and Proposition \ref{Mo}, by taking $i$ larger
if necessary,
\[
E_{\va_i}(u_{\va_i}, \Lda_{\delta t_i, t_i})
\leq C t_i^{n-2} E_{\va_i}(u_{\va_i}, \om)
\leq C t_i^{n-2} \log\frac{\delta t_i}{\va_i}.
\]
A further application of Fubini's theorem yields $r_i \in [\frac{3t_i}{4}, t_i]$
and a subsequence (not relabeled) such that
\be
E_{\va_i}(u_{\va_i},B_{\delta t_i}^2 \times \pa B_{r_i}^{n-2})
\leq C t_i^{n-3} \log\frac{\delta t_i}{\va_i}, \label{upperlowb}
\ee
where $C = C(M, \om, n) > 0$. Without loss of generality, we set $r_i = t_i$; the general case follows by the same argument after a translation.

We now rescale to a fixed domain. Define $\ol{\va}_i := \frac{\va_i}{t_i}$
and $u_{\delta, \ol{\va}_i}(y) := u_{\va_i}(t_i y)$ for $y \in \Lda_{\delta,1}$.
From \eqref{assumptionbound}, \eqref{nn0geq}, and \eqref{upperlowb}, the rescaled map satisfies
\begin{gather*}
\|u_{\delta, \ol{\va}_i}\|_{L^\infty(\Lda_{\delta,1})} \leq M', \\
E_{\ol{\va}_i}(u_{\delta, \ol{\va}_i}, \Ga_{\delta,1})
\leq \delta^{n-3} \eta \log\frac{\delta}{\ol{\va}_i}, \\
E_{\ol{\va}_i}(u_{\delta, \ol{\va}_i}, B_\delta^2 \times \pa B_1^{n-2})
\leq M' \log\frac{\delta}{\ol{\va}_i},
\end{gather*}
where $M'$ depends only on $f$, $M$, $\cN$, $\om$, and $n$. We may therefore
apply Lemma \ref{chooseseveralcases} with boundary data
$g_{\delta, \ol{\va}_i} := u_{\delta, \ol{\va}_i}|_{\pa\Lda_{\delta,1}}$.
Fix $\eta_1, \wh{\va}_1 \in (0,1)$ as provided by that lemma, depending only
on $f$, $M'$, $\cN$, and $n$. By \eqref{vaitilimit}, $\ol{\va}_i \to 0^+$,
so $\ol{\va}_i \in (0, \wh{\va}_1\delta)$ for any large $i$. Choosing
$\eta \in (0, \eta_1)$, the conclusion \eqref{EolvaapproximateEstar} applies:
there exists $\sg \in [\Ss^1, \cN]$ such that
\be
\left| E_{\va_i}(u_{\va_i}, \Lda_{\delta t_i, t_i})
- |\sg|_* \HH^{n-2}(B_1^{n-2}) \log\frac{\delta t_i}{\va_i} \right|
\leq \al \log\frac{\delta t_i}{\va_i} + C, \label{recallimportant}
\ee
where $C > 0$ depends only on $f$, $M$, $\cN$, $\om$, $n$, and
\[
\al \leq C(\delta + \delta^{n-4}\eta).
\]
Here we used the change of variables $y =\f{x}{t_i}$ to pass between
$E_{\va_i}(u_{\va_i}, \Lda_{\delta t_i, t_i})$ and
$E_{\ol{\va}_i}(u_{\delta,\ol{\va}_i}, \Lda_{\delta,1})$.

Dividing \eqref{recallimportant} by $|\log\va_i|$ and passing $i \to +\infty$,
using $n \geq 4$ and \eqref{vaitilimit}, we obtain
\[
\bigl|\cD_{x_0,0}(\mu_*)(\Lda_{\delta,1})
- |\sg|_* \HH^{n-2}(B_1^{n-2})\bigr| \leq C(\delta + \eta).
\]
Since $\cD_{x_0,0}(\mu_*)(\Lda_{\delta,1}) = \Theta^{n-2}(\mu_*,x_0)
\HH^{n-2}(B_1^{n-2})$ by \eqref{nu0con}, dividing by
$\HH^{n-2}(B_1^{n-2}) > 0$ gives
\[
\bigl|\Theta^{n-2}(\mu_*, x_0) - |\sg|_*\bigr| \leq C(\delta + \eta).
\]
Choosing $\delta = \eta$ and letting $\eta \to 0^+$, we conclude that
$\Theta^{n-2}(\mu_*, x_0) = |\sg|_*$, and hence
\[
\Theta^{n-2}(\mu_*, x_0) \in \{|\sg|_* : \sg \in [\Ss^1, \cN]\}.
\]
This completes the proof.
\end{proof}

\begin{proof}[Proof of Theorem \ref{maintheorem}]
The four properties of the theorem follow from the results established earlier in this paper. The first property is a consequence of Proposition \ref{proprectifiable}. The second follows from Propositions \ref{stationarity} and \ref{densitydescrete}. The third is given by Lemma \ref{CUleq}, and the fourth by
Proposition \ref{propombackSstar}.
\end{proof}

\section*{Acknowledgment}

Haotong Fu and Wei Wang are partially supported by the National Key R$\&$D Program of China under Grant 2023YFA1008801 and NSF of China under Grant 12288101. Giacomo Canevari is partially supported by INdAM--GNAMPA, project ref.~E53C25002010001.

\bibliographystyle{plain}

\end{document}